\numberwithin{equation}{section}
\newtheorem{theorem}{Theorem}[section]
\newtheorem{lemma}[theorem]{Lemma}
\newtheorem{proposition}[theorem]{Proposition}
\newtheorem{corollary}[theorem]{Corollary}
\newtheorem{assumption}[theorem]{Assumption}
\newtheorem{convention}[theorem]{Convention}
\theoremstyle{definition}
\newtheorem{remark}[theorem]{Remark}
\def\E{{\mathbb E}}
\def\R{{\mathbb R}}
\def\N{{\mathbb N}}
\def\FF{{\mathbb F}}
\def\P{{\mathcal P}}
\def\L{{\mathcal L}}
\def\A{{\mathbb A}}
\def\F{{\mathcal F}}
\def\tr{{\mathrm{Tr}}}
\def\Var{{\mathrm{Var}}}
\newcommand{\bx}{\mathbf{x}}
\newcommand{\bX}{\mathbf{X}}
\newcommand{\bW}{\mathbf{W}}
\newcommand{\sA}{\mathcal{A}}
\newcommand{\bP}{\mathbb{P}}
\newcommand{\sF}{\mathcal{F}}
\newcommand{\vdt}{\mathcal{V}_{\text{dist}}}
\newcommand{\sL}{\mathcal{L}}
\newcommand{\sP}{\mathcal{P}}
\newcommand{\sR}{\mathcal{R}}
\newcommand{\spt}{\sP_2(\R^d)}
\def\ds{\displaystyle}
\newcommand{\be}{\begin{equation}}
\newcommand{\ee}{\end{equation}}
\newcommand{\linf}{L^{\infty}}
\newcommand{\sG}{\mathcal{G}}
\newcommand{\by}{\mathbf{y}}
\newcommand{\bz}{\mathbf{z}}
\newcommand{\bY}{\mathbf{Y}}
\newcommand{\bZ}{\mathbf{Z}}
\newcommand{\vd}{V_{\text{dist}}}
\newcommand{\ad}{\mathcal{A}_{\text{dist}}}
\newcommand{\stwo}{\mathcal{S}^2}
\newcommand{\ltwo}{L^2}
\newcommand{\jol}{J_{\text{OL}}}
\newcommand{\cv}{\mathcal{V}}
\newcommand{\cu}{\mathcal{U}}
\newcommand{\ope}{\text{op}}
\newcommand{\linfty}{L^{\infty}}
\newcommand{\calpha}{C^{\alpha}}
\newcommand{\calphaloc}{\calpha_{\text{loc}}}
\newcommand{\wass}{\mathcal{W}}
\newcommand{\mf}{\text{MF}}
\title[Approximately optimal distributed controls]{Approximately optimal distributed stochastic controls beyond the mean field setting}
\author{Joe Jackson and Daniel Lacker} 
\address{The University of Texas at Austin}
\email{jjackso1@utexas.edu}
\address{Columbia University}
\email{daniel.lacker@columbia.edu}
\thanks{J.J. is supported by the NSF under Grant No. DGE1610403. D.L. is partially supported by the NSF CAREER award DMS-2045328. Any opinions, findings and conclusions or recommendations expressed in this material are those of the authors and do not necessarily reflect the views of the NSF}
\begin{document}

\begin{abstract}
We study high-dimensional stochastic optimal control problems in which many agents 
cooperate
to minimize a convex cost functional.
We consider both the full-information problem, in which each agent observes the states of all other agents, and the distributed problem, in which each agent observes only its own state.
Our main results are sharp non-asymptotic bounds on the gap between these two problems, measured both in terms of their value functions and optimal states.
Along the way, we develop theory for distributed optimal stochastic control in parallel with the classical setting, by characterizing optimizers in terms of an associated stochastic maximum principle and a Hamilton-Jacobi-type equation.
By specializing these results to the setting of mean field control, in which costs are (symmetric) functions of the empirical distribution of states, we derive the optimal rate for the convergence problem in the displacement convex regime.
\end{abstract}

\maketitle

\setcounter{tocdepth}{1}
\tableofcontents

\section{Introduction} \label{sec.intro}

Consider a stochastic optimal control problem, on a finite time horizon $T > 0$, of the following form. Each of $n$ agents $i=1,\ldots,n$ controls a state process $X^i_t$, with values in $\R^d$, governed by the stochastic dynamics
\begin{equation}
dX^i_t = \alpha^i(t,\bX_t)dt + dW^i_t.  \label{intro:SDE}
\end{equation}
Here $W^1,\ldots,W^n$ are independent Brownian motions, and $\bm\alpha = (\alpha^1,\ldots,\alpha^n)$ is a Markovian (feedback) control which depends on the entire vector $\bX_t=(X^1_t,\ldots,X^n_t)$ of states. That is, $\bm\alpha$ belongs to the set of full-information controls, defined as follows (and more precisely in Section \ref{se:probformulation}). \newline \newline 
\textbf{Full-information controls}: $\sA$ denotes the set of $\bm{\alpha} = (\alpha^1,\ldots,\alpha^n)$, where $\alpha^i : [0,T] \times (\R^d)^n \to \R^d$ is measurable for each $i$, and the SDE \eqref{intro:SDE} admits a unique strong solution. \newline \newline 
The full-information control problem is to minimize a functional of the following form:
\begin{align}
V := \inf_{\bm{\alpha} \in \sA} J(\bm{\alpha}), \qquad J( \bm{\alpha}) := \E\bigg[\int_0^T \bigg(\frac{1}{n} \sum_{i = 1}^n L^i(X_t^i, \alpha^i(t,\bX_t)) + F(\bX_t) \bigg) dt + G(\bX_T) \bigg]. \label{intro:fullinfo}
\end{align}
Precise assumptions on $L^i$, $F$, and $G$ are deferred to Section \ref{se:probformulation}.
It will be important later that the running cost is additively separable as a function of the controls, as opposed to taking the more general form $f(\bX_t,\bm{\alpha}(t,\bX_t))$ with arbitrary dependence on all $n$ states and controls; see Section \ref{se:outlook} for additional discussion of possible generalizations. 
We have chosen to focus on Markovian (closed-loop) controls throughout the paper, but a parallel and largely equivalent story could be told using open-loop controls (processes adapted to a given filtration).

The framework of \emph{mean field control} provides an important special case of our setup as well as a point of reference for our work.
The \emph{mean field case} arises when $L^i$ does not depend on $i$, and when $F$ and $G$ are symmetric:
\begin{align}
\textbf{Mean field case: } \ \ L^i=L, \ \ F(\bx) = \sF(m^n_{\bx}), \ \ G(\bx) = \sG(m^n_{\bx}), \quad m^n_{\bx} := \frac{1}{n}\sum_{i=1}^n \delta_{x_i}, \label{intro:MF1}
\end{align}
for some nice functions $\sF,\sG : \P(\R^d) \to \R$, where $\P(\R^d)$ is the space of probability measures on $\R^d$.
Here and throughout we write $\bx=(x^1,\ldots,x^n)$ for a generic element of $(\R^d)^n$.
In the mean field case, it is known \cite{lacker2017limit,djete2022mckean} under general assumptions on $(L,\sF,\sG)$ that the value function $V$ converges as $n\to\infty$  to the value of the corresponding mean field control problem:
\begin{align}
\begin{split}
V_{\mathrm{MF}} = \inf_\alpha \E &\bigg[ \int_0^T\big( L(X_t,\alpha(t,X_t)) + \sF(\L(X_t))\big)dt + \sG(\L(X_T))\bigg], \\
\text{where } \ \ &dX_t = \alpha(t,X_t)dt + dW_t, \quad \L(X_t)=\mathrm{Law}(X_t).
\end{split} \label{intro:MF2}
\end{align}
The idea is that the state processes should become approximately i.i.d.\ as $n\to\infty$, and their empirical measure $m^n_{\bX_t}$ should thus be close to the common law $\L(X_t)$ of the state processes, by a law of large numbers. 
If $\alpha^{\mathrm{MF}} : [0,T] \times \R^d \to \R^d$ denotes an optimal control for this mean field control problem, then the controls $(\alpha^{\mathrm{MF},i})_{i=1}^n \in \sA$ defined by $\alpha^{\mathrm{MF},i}(t,\bx) := \alpha^{\mathrm{MF}}(t,x_i)$ should be nearly optimal for the original control problem \eqref{intro:fullinfo}.

Notably, the approximately optimal controls $\bm{\alpha}^{\mathrm{MF}}$ constructed from the mean field limit are \emph{distributed}, in the sense that agent $i$'s control depends only on the state of agent $i$, not the other agents. Indeed, one of the primary motivations for mean field control theory is that it provides a recipe for constructing near-optimal distributed controls for the often less tractable high-dimensional control problem.

Outside of the highly symmetric mean field case \eqref{intro:MF1}, it is not as clear how to construct near-optimal distributed controls for the control problem \eqref{intro:fullinfo}.
There have been some recent proposals to extend the mean field framework to accommodate certain models with heterogeneous interactions which possess certain asymptotic structure, by taking advantage of the theory of graphons \cite{aurell2022stochastic,bayraktar2022propagation,gao2020linear,lacker2022label}.
For instance, suppose the cost function $G$ (and similarly $F$) takes the form
\begin{align}
G(\bx) = \frac{1}{n}\sum_{i=1}^n G_1(x^i) + \frac{1}{n }\sum_{i,j=1}^n J_{ij}G_2(x^i-x^j), \label{intro:hetero}
\end{align}
for some functions $G_1,G_2 : \R^d \to \R$ and some symmetric \emph{interaction matrix} $J$. 
If $J$ converges as $n\to\infty$ to a graphon in a suitable sense, then a similar (but more involved) recipe should apply as in the mean field case: One can first solve a corresponding limiting problem, and then use it to construct near-optimal controls for the $n$-agent problem which are \emph{distributed}.
We must stress that both the mean field and graphon-based approaches require specific asymptotic structure for the cost functions $(L^i,F,G)$.

The main goal of this paper is to propose a new perspective on the construction of near-optimal distributed controls, which is non-asymptotic in nature and imposes no such structural assumptions.
We achieve this by studying directly the problem of optimization over distributed controls.
\newline \newline 
\textbf{Distributed controls:} $\ad$ denotes the set of $(\alpha^1,\ldots,\alpha^n) \in \sA$ for which $\alpha^i(t,x^1,\ldots,x^n)=\alpha^i(t,x^i)$ depends only on the $i^\text{th}$ state variable, for each $i$.
\newline \newline 
The distributed optimal control problem is defined by
\begin{align}
\vd := \inf_{\bm{\alpha} \in \ad} J(\bm{\alpha}). \label{intro:dstr}
\end{align}
Note that $V \le \vd$, because $\vd$ minimizes the same functional over a smaller class of admissible controls.
The quantity $\vd-V$ can be seen as a measure of the degree of suboptimality of distributed controls for the original full-information problem.
When it can be shown that $\vd-V < \epsilon$, it follows that there exist $\epsilon$-optimal distributed controls for the original problem \eqref{intro:fullinfo}.

The term ``distributed" is used broadly in the control theory literature, with a variety of different meanings. For instance, an early paper \cite{lions1973optimal} by J.L.\ Lions defines ``the control of distributed systems" in great generality as ``systems for which the state can be described by a solution of a partial differential equation."
Our terminology is much more specific and chosen to be consistent with its usage in the literature on mean field games and control.

\subsection{Near-optimality of distributed controls}

The most substantial part of our work derives quantitative bounds on $\vd-V$.
The simplest special case of our bound takes the following form (say, in $d=1$), stated more precisely in Theorem \ref{thm.est1}:
\begin{align}
0 \le \vd - V \le Cn\sum_{1 \le i < j \le n}\big( \|\partial_{ij} F\|_\infty^2 + \|\partial_{ij} G\|_\infty^2 \big), \label{intro:mainbound1}
\end{align}
for an explicit constant $C$ which crucially does not depend on $n$. It is important that the constant $C$ is explicit, so that our non-asymptotic bounds specialize properly to various asymptotic regimes such as the mean field case \eqref{intro:MF1} and heterogeneous interactions like \eqref{intro:hetero}.
In general, the constant $C$ in \eqref{intro:mainbound1} depends on the $L^\infty$-norms of the second derivatives of $L^i$ and its convex conjugate (Hamiltonian), as well as (the $L^\infty$-norms of) the operator norms of the Hessians of $F$ and $G$, where the latter are crucially dimension-free in most examples.
The case of quadratic cost $L^i(x,a)=|a|^2/2$ permits certain simplifications, and in Corollary \ref{co:quadratic} we obtain a particularly clean bound, stated here under the additional assumption of non-random initial states:
\begin{align}
 (\vd - V)^{1/2} \le  \bigg(nT^4\sum_{1 \le i < j \le n} \|\partial_{ij} F\|_\infty^2\bigg)^{1/2} +  \bigg(nT^2\sum_{1 \le i < j \le n} \|\partial_{ij} G\|_\infty^2\bigg)^{1/2}. \label{intro:mainbound1-quadratic} 
\end{align}

In addition, as a by-product of our proof of \eqref{intro:mainbound1}, we obtain a quantitative result in the spirit of propagation of chaos. Specifically, considering the optimal states $(X^1,\ldots,X^n)$ of the full-information problem, we find a distributed state process $(\widehat{X}^1,\ldots,\widehat{X}^n)$ such that, for each $k \le n$, the joint law of $(X^i)_{i \in S}$ is close to that of $(\widehat{X}^i)_{i \in S}$ in quadratic Wasserstein distance for ``most" choices of $S \subset \{1,\ldots,n\}$ of cardinality $k$. See Theorem \ref{thm.est2} for a precise statement.

The two main assumptions behind our main bound \eqref{intro:mainbound1} are (1) the boundedness of the second (but not the first) derivatives of $F$, $G$, and the Hamiltonians $H^i(x,p) := \sup_{a \in \R^d}\big( - a \cdot p - L^i(x,a)\big)$, and (2) the convexity of $F$, $G$, and $L^i$ for each $i$.
In fact, in the precise Theorem \ref{thm.est1} below, a sharper bound is given in which the $L^\infty$ norms in \eqref{intro:mainbound1} are replaced by certain $L^2$ norms. The boundedness assumptions can be relaxed in the case of quadratic Hamiltonian, as discussed in Remark \ref{re:unboundedness}. The convexity assumptions are difficult to remove but we give some modest results in this direction in Section \ref{sec.nonconvex} under additional smallness assumptions.

The most important feature of the bound \eqref{intro:mainbound1} is that it involves only the cross-derivatives, $\partial_{ij} F$ and $\partial_{ij}G$ for $i \neq j$, and none of the derivatives $\partial_{ii}F$ or $\partial_{ii}G$ appear.
The bound is non-asymptotic, but certain asymptotic regimes illustrate that it is quite sharp:

\subsubsection{The mean field case} \label{se:intro:MFC}

A first special case we highlight is the mean field case \eqref{intro:MF1}, treated in detail in Section \ref{sec.mfc}. In the bound \eqref{intro:mainbound1}, each cross-derivative becomes $\partial_{ij}G(\bx) = n^{-2} D_m^2 \sG(m^n_{\bx},x_i,x_j)$, which is $O(1/n^2)$ if the second Lions (Wasserstein) derivative $D_m^2\sG$ is bounded, and similarly for $F$. Summing up, the bound \eqref{intro:mainbound1} becomes $|V-\vd|=O(1/n)$. We will show separately (and in fact with no need for convexity assumptions) that $|\vd-V_{\mathrm{MF}}|=O(1/n)$, where $V_{\mathrm{MF}}$ was defined in \eqref{intro:MF2}. This then implies the optimal rate of $|V-V_{\mathrm{MF}}|=O(1/n)$ for the convergence problem; see Theorem \ref{thm.mf} for a precise statement. This appears to be a new result at this level of generality, though we stress that it relies crucially on some smoothness and most importantly the \emph{displacement convexity} of $\sF$ and $\sG$. It is expected, though not documented, that the same assumptions would lead to the existence of a smooth solution of the Hamilton-Jacobi equation on $\P_2(\R^d)$ which is (at least formally) satisfied by the value function of the mean field control problem; a smooth solution of this equation can be used to prove the same $O(1/n)$ convergence rate using the method developed in \cite{cdll} for mean field games and explained in \cite{germain2022rate} in the control setting. Interestingly, our method makes no use of this Hamilton-Jacobi equation.

It is natural to wonder if our method could lead to convergence rates in the difficult non-convex regime, but so far we are only able to handle small time horizon (Section \ref{sec.nonconvex}). In contrast, the recent work \cite{cdjs} obtains convergence rates in the non-convex regime without any restrictions on the time horizon, although the rates are suboptimal. See also \cite{Cecchin2021FiniteSN} for similar results in the finite state space.

It is interesting to note that several prior works on mean field games used the distributed control problem as a simplification of the full-information setup, because it is easier to establish rigorously the convergence of distributed equilibria of the $n$-player game to the mean field limit (i.e., the convergence problem). Indeed, the initial work of Lasry-Lions \cite{lasry2007mean} adopted this perspective, leaving open the full-information convergence problem; see also \cite{cardaliaguetol,feleqi2013derivation}, but note that none of these authors use the term ``distributed."
In our context, they studied the convergence of $\vd$ to $V_{\mathrm{MF}}$.

The full-information convergence problem (i.e., the convergence of $V$ to $V_{\mathrm{MF}}$), has by now been resolved in a qualitative sense, in quite general settings for mean field games \cite{lacker2020convergence,djete2021large} and control \cite{lacker2017limit,djete2022mckean}. The existing quantitative results for games  \cite{cdll,DelLacRam} rely on the analysis of the so-called \emph{master equation}, and the results for control  \cite{germain2022rate,cdjs,cardaliaguet2022regularity} rely  similarly on the Hamilton-Jacobi equation solved by the mean field value function,  though we mention that alternative BSDE techniques of \cite{possamai2021non} have yielded quantitative convergence results for mean field games. 
In our notation, these works studied  directly the convergence of $V$ to $V_{\mathrm{MF}}$, without any intermediate use of $\vd$.

The two approximations $V_{\mathrm{MF}} \approx \vd$  and $\vd \approx V$ are interestingly distinct in nature. The former is more probabilistic, essentially reducing to a rate of convergence of (smooth functionals of) i.i.d.\ empirical measures. The latter is more control-theoretic in nature, and it is here that convexity plays a critical role. See Remark \ref{re:differentapproximations} for additional details.

\subsubsection{Heterogeneous interactions}  \label{se:intro:hetero}

Our most important contribution, beyond proving the expected optimal rate for the mean field convergence problem, is to move beyond the mean field setting, with our main bound on $|V-\vd|$ applying in quite general asymmetric settings.
This connects with a recent and growing literature on large-population games and control problems in which the pairwise interactions between agents are modeled by a (potentially weighted) graph, as was mentioned in the paragraph surrounding \eqref{intro:hetero}. 
A key advantage of our approach over the prior graphon-based work discussed above is its quantitative and non-asymptotic nature, which makes it more readily applicable to sparse graphs.

We illustrate this point via the example \eqref{intro:hetero} of heterogeneous pairwise interactions modeled by an interaction matrix $J$, assumed to be symmetric with zeros on the main diagonal.
In this case, $\partial_{ij}G(\bx) = -\frac{1}{n } J_{ij}(D^2 G_2(x^i-x^j)+D^2 G_2(x^j-x^i))$ for $i \neq j$. Thus 
\begin{align}
n\sum_{1 \le i < j \le n} \|\partial_{ij} G\|_\infty^2 \le 2\|D^2 G_2\|_\infty^2   \mathrm{Tr}(J^2) / n . \label{intro:heterobound}
\end{align}
This reveals that, in an asymptotic regime where $n\to\infty$, we need $\tr(J^2) = o(n)$ in order to guarantee that $|V-\vd| \to 0$. This same condition on $J$ appeared in \cite{basak2017universality} in the study of Ising and Potts models on large graphs.

A common special case is when $J$ is $1/m$ times the adjacency matrix of a $m$-regular graph (with $m=n$ falling into the mean field case), which models each agent as interacting  with the average of its neighbors in the graph. In this case $\mathrm{Tr}(J^2)=n/m$, and so the right-hand side of \eqref{intro:heterobound} vanishes if $m\to\infty$ as $n\to\infty$. Building on this, we show in Corollary \ref{cor.mf-hetero} that in fact $|V-V_{\mathrm{MF}}|=O(1/m)$, where $V_{\mathrm{MF}}$ is the mean field value function defined in \eqref{intro:MF2} with $\sG(m) := \langle m, G_1\rangle + \langle m \otimes m, G_2(\cdot-\cdot)\rangle$. That is, as long as $m$ diverges, the control problem set on an $m$-regular graph converges to the usual mean field control problem. This insensitivity of the mean field limit with respect to changes in the $n$-agent interaction matrix is a ``universality" phenomenon, which has been observed in various settings and is expected for sufficiently dense matrices $J$ with row averages (approximately) equal to 1; cf.\ \cite[Remark 3.12]{lacker2022label} for mean field games, \cite{oliveira2019interacting} for interacting diffusions, and \cite[Section 2.1]{basak2017universality} for Ising and Potts models.
The regime of bounded degree $m$ (as $n\to\infty$) is  very different, and one cannot expect distributed controls to be approximately optimal; see \cite{lacker2022case} for an in-depth discussion of this denseness/sparseness threshold in a game-theoretic context.

\subsubsection{The Cole-Hopf case}

A noteworthy special case is the following:
\begin{align}
\textbf{Cole-Hopf case: } \ \ F \equiv 0, \ \ L^i(x,a)=|a|^2/2 \text{ for each } i, \ \ G \text{ is general.} \label{intro:ColeHopf}
\end{align}
In this special case, our bound \eqref{intro:mainbound1-quadratic} reduces to the recent result \cite[Corollary 2.14]{LacMukYeu}, which was proven (for $d=1$) using ideas from the theory of nonlinear large deviations.
The full-information control problem admits a well known semi-explicit solution, via Cole-Hopf transformation:
\begin{align}
V &= -\frac{1}{n}\log \int_{(\R^d)^n} e^{nG}\,d\gamma_T = \inf_{m\in \P((\R^d)^n)} \bigg( \int_{(\R^d)^n} G\,dm + \frac{1}{n}H(m\,|\,\gamma_T)\bigg), \label{intro:LMYapproach}
\end{align}
where $\gamma_T$ is the centered Gaussian measure with covariance matrix $TI$ and $H$ is relative entropy.
This can be interpreted as a ``static" formulation of the control problem, an optimization over $m \in \P((\R^d)^n)$ corresponding to the time-$T$ law of the state process.
The method of \cite{LacMukYeu} is based on a similar ``static" reformulation of the distributed control problem, recognizing that $\vd$ equals the same infimum as in \eqref{intro:LMYapproach} but restricted to product measures $m=m^1\otimes \cdots \otimes m^n$, and then using functional inequalities for log-concave measures. An initial motivation for our work was to generalize their result beyond the Cole-Hopf case, where the static reformulation is no longer available and a completely different approach is required.
We should mention, however, that the static approach of \cite{LacMukYeu} leads to more detailed information about the structure of the optimal distributed control, discussed in Remark \ref{re:brownianbridge} below.

\subsubsection{Outline of the rest of the introduction}

The remainder of the introduction is divided into roughly three parts. First, we describe some general theory around the distributed control problem, with a verification theorem and a maximum principle. Second, we sketch the proof of the main bound \eqref{intro:mainbound1}. Lastly, we discuss some possible generalizations.

\subsection{Toward a theory of distributed stochastic control}

The first part of our paper develops some general theory for the distributed control problem \eqref{intro:dstr}.
There is a well-developed theory for the full-information problem \eqref{intro:fullinfo}, as is summarized in several textbooks \cite{yong1999stochastic,fleming2006controlled,pham2009continuous}.
But the distributed control problem lies outside of the scope of classical theory, due to its atypical information constraint.
Nor may we turn to the literature on stochastic control under partial observations \cite{bensoussan1992stochastic}, in which there is a single, common set of information on which all controls $(\alpha^1,\ldots,\alpha^n)$ are based.


The perspective we adopt is to ``lift" the distributed control problem from a state process in $(\R^d)^n$ to a state process in $\P(\R^d)^n$, the space of vectors of probability measures on $\R^d$.
The space $\P(\R^d)^n$ can be identified with the space of product measures on $(\R^d)^n$, and it appears naturally here because the  state processes $(X^1,\ldots,X^n)$ are \emph{independent} precisely when the control is distributed (and the initial states are independent).
Letting $\mu^i_t$ denote the law of $X^i_t$, for any distributed control $\bm{\alpha}=(\alpha^1,\ldots,\alpha^n) \in \ad$ we may write
\begin{align*}
J(\bm{\alpha}) = \int_0^T \int_{(\R^d)^n}\bigg(\frac{1}{n} \sum_{i = 1}^n L^i(x_i, \alpha^i(t,x_i)) + F(\bx) \bigg) \prod_{i=1}^n \mu^i_t(dx_i)\,dt + \int_{(\R^d)^n} G(\bx)  \prod_{i=1}^n \mu^i_T(dx_i).
\end{align*}
We study this control problem over the (non-random) state process $(\mu^1_t,\ldots,\mu^n_t)_{t \in [0,T]}$, by defining the value function as a function on $[0,T] \times \P(\R^d)^n$.

The first main tool we need is a verification theorem for an associated nonlinear partial differential equation (PDE) on $[0,T] \times \P(\R^d)^n$. 
Focusing on the Cole-Hopf case for simplicity, the PDE for the distributed value function $\vdt : [0,T] \times \P(\R^d)^n \to \R$ takes the following form:
\begin{align}
-\partial_t \vdt(t,\bm m) &+ \frac12\sum_{i=1}^n \int_{\R^d} \Big(n|D_{m^i}\vdt(t,\bm m,y)|^2 - \mathrm{Tr} \big(D_y D_{m^i} \vdt(t,\bm m,y)\big)\Big) m^i(dy) = 0, \label{intro:vdt-PDE} \\
 \vdt(T,\bm m) &= \int_{(\R^d)^n} G\, d(m^1 \otimes \cdots \otimes m^n). \nonumber
\end{align}
Here we abbreviate $\bm m=(m^1,\ldots,m^n)$ for a generic element of $\P(\R^d)^n$, and $D_{m^i}$ denotes the Lions (Wasserstein) derivative with respect to the variable $m^i \in \P(\R^d)$; the precise definition is recalled in Section \ref{se:analysisonP2}.
Our (fairly straightforward) verification result (Proposition \ref{prop.verification}) shows that a smooth solution of this PDE must equal the value function of the distributed control problem, viewed as a function of the starting time and the $n$ initial distributions of the state processes.

Second, we prove a stochastic maximum principle, which leads to a description of optimizers for $\vd$ in terms of a forward-backward stochastic differential equation (FBSDE) of McKean-Vlasov type. Again focusing on the Cole-Hopf case \eqref{intro:ColeHopf} for simplicity, this equation takes the form
\begin{align}
\label{eq.mpdist-intro} 
 dX_t^i &= - n Y_t^i dt + dW_t^i, \quad  dY_t^i =  Z_t^i dW_t^i, \qquad X_0^i = x^i, \quad Y_T^i = \E[G(\bX_T)\,|\,X^i_T].
\end{align} 
Note that $(X^i,Y^i)_{i=1}^n$ must be independent, and so the conditional expectation $\E[G(\bX_T)\,|\,X^i_T]$ is just an integral over the law of the components $j \neq i$.
Under convexity assumptions, we show that this FBSDE is well-posed and characterizes the unique optimal distributed control; see Propositions \ref{prop.mpfbsde} and \ref{existunique}. This characterization is later used to prove Theorem \ref{thm.est3}, which gives $L^2$-estimates between the optimal controls of the distributed and full-information problems.

There are some notable omissions in our theory of distributed stochastic control.
We do not state a dynamic programming principle.
It is also natural to expect that our verification theorem can be complemented with a viscosity solution theory.
We chose not to develop the theory here to the utmost generality, but rather just enough to illustrate the form it should take, and to serve our primary goal of obtaining bounds like \eqref{intro:mainbound1}.

\subsection{Comparing the control problems} \label{se:intro:proofsketch}

Here we outline the main ideas going into the bound \eqref{intro:mainbound1}, focusing on the Cole-Hopf case defined in \eqref{intro:ColeHopf} and with $d=1$. Associated to the original (full-information) control problem \eqref{intro:fullinfo} we associate the usual value function $V : [0,T] \times (\R^d)^n \to \R$, defined for any initial time and any non-random state $\bx \in (\R^d)^n$. We begin by ``lifting" this value function to random but independent initial states, defining $\cv : [0,T] \times \P(\R^d)^n \to \R$ by
\begin{align}
\cv(t,\bm m) := \int_{(\R^d)^n} V(t,\cdot)\, d(m^1 \otimes \cdots \otimes m^n). \label{intro:Vlift}
\end{align}
We show (in Lemma \ref{lem.vlifteqn}) that $\cv$ solves exactly the same PDE \eqref{intro:vdt-PDE} except with the $0$ on the right-hand side replaced by the ``error term" $-E(t,\bm m)$, where
\begin{align*}
E(t,\bm m) := \frac{n}{2} \sum_{i = 1}^n \E\big[ |D_i V(t,\bm\xi)|^2 - |\E[D_i V(t,\bm\xi) \,|\, \xi^i]|^2 \big] = \frac{n}{2}\sum_{i=1}^n \E\,\Var(D_iV(t,\bm\xi)\,|\, \xi^i),
\end{align*}
where $\bm\xi=(\xi^1,\ldots,\xi^n) \sim m^1 \otimes \cdots \otimes m^n$, and where the variance of a random vector is defined as the sum of the variances of the components.
Using the verification argument mentioned before, we deduce that $\cv(t,\bm m)$ is the value function for a certain distributed optimal control problem in which this error term $E$ appears as a running cost. The optimal state process $\widehat{\bX}_s=(\widehat{X}^1_s,\ldots,\widehat{X}^n_s)$ for this problem $\cv(t,\bm m)$ turns out to be (by Lemma \ref{lem.vlifteqn}) the unique solution of the McKean-Vlasov  system
\begin{align}
d\widehat{X}^i_s = -n\E[D_i V(s,\widehat{\bX}_s)\,|\,\widehat{X}^i_s]ds + dW^i_s, \ \ s \in (t,T], \ \ \widehat{X}^i_t \sim m^i \text{ independent}. \label{intro:xhatdef}
\end{align}
Crucially, the processes $\widehat{X}^1,\ldots,\widehat{X}^n$ are independent, and this conditional expectation is really just shorthand for an expectation with respect to $(\widehat{X}^j_s)_{j \neq i}$.

Knowing that $\vdt$ and $\cv$ solve the same PDE up to an error term, we deduce (in Lemma \ref{lem.suffest}) via a form of comparison principle that
\begin{align}
0 \le \vdt(t,\bm m) - \cv(t,\bm m) \le \int_t^T E(s,\bm m_s)\,ds, \label{intro:pf:Ebound}
\end{align}
where $\bm m_s=(m^1_s,\ldots,m^n_s)$ with $m^i_s=\mathrm{Law}(\widehat{X}^i_s)$, where $\widehat{X}^i$ is defined in \eqref{intro:xhatdef}.
Estimating $E(t,\bm m)$ appears at first to be a challenging prospect, and we do not expect it to admit a useful bound which is uniform in $\bm m \in \P(\R^d)^n$. Indeed, focusing on time $t=T$,  in the mean field case \eqref{intro:MF1} we can expect at best $\|D_iV(T,\cdot)\|_\infty = \|D_iG\|_\infty = O(1/n)$ for each $i$ if $\sG$ is Wasserstein-Lipschitz, which leads to a useless bound of $\|E(T,\cdot)\|_\infty = O(1)$.
Remarkably, however, the dynamics of $E(s,\bm m_s)$ \emph{along the curve $(\bm m_s)_{s \in [t,T]}$} turn out to be tractable: Abbreviating $V=V(s,\widehat{\bX}_s)$, we show that
\begin{align*}
\frac{d}{ds}E(s,\bm m_s) &= n^2\E\bigg[ \sum_{i,j=1}^n \big(D_i V - \E[D_i V\,|\, \widehat{X}^i_s]\big)\big(D_j V - \E[D_j V\,|\, \widehat{X}^j_t]\big) D_{ij}V \bigg] \\
	&\quad + \frac{n}{2}\E\bigg[\sum_{i,j=1}^n|D_{ij}V|^2 -  \sum_{i=1}^n |\E[D_{ii}V\,|\, \widehat{X}^i_s]|^2 \bigg].
\end{align*}
The second line is nonnegative. When $G$ is convex, it is well known (and easy to show) that $V(t,\bx)$ is convex in $\bx$, and so the first line is also nonnegative. We deduce from \eqref{intro:pf:Ebound} that
\begin{align*}
\vdt(t,\bm m) - \cv(t,\bm m) \le (T-t) E(T,\bm m_T) = (T-t) \frac{n}{2}\sum_{i=1}^n \E\,\Var(D_iG(\widehat{\bX}_T)\,|\, \widehat{X}^i_T).
\end{align*}
Using the convexity of $V$ we can show that the drift of $\widehat{X}^i$ is monotone, and thus $\widehat{X}^i_T$ satisfies the Poincar\'e inequality $\Var(\varphi(\widehat{X}^i_T)) \le (T-t)\E|D\varphi(\widehat{X}^i_T)|^2$ for any $\varphi \in C^1$ and $i=1,\ldots,n$. The Poincar\'e inequality tensorizes to show that
\begin{align*}
\Var(D_iG(\widehat{\bX}_T)\,|\, \widehat{X}^i_T) \le (T-t)\sum_{j=1,\, j \neq i}^n\E\big[|D_{ij}G(\widehat{\bX}_T)|^2 \,|\,\widehat{X}^i_T\big].
\end{align*}
Finally, combining these inequalities yields
\begin{align*}
\vdt(t,\bm m) - \cv(t,\bm m) \le n(T-t)^2\sum_{1 \le i < j \le n} \E|D_{ij}G(\widehat{\bX}_T)|^2 .
\end{align*}
Taking $t=0$ yields  \eqref{intro:mainbound1}, in the Cole-Hopf case. See Theorem \ref{thm.est1} for the general version of this argument, beyond the Cole-Hopf case.

The convexity of $V$ is the decisive ingredient in the above argument, and no other quantitative information is needed of $(V,F,G,...)$.
In fact, even if $V$ is not convex but satisfies a lower bound of the form $D^2V \ge -(c/n)I$ in semidefinite order, then we may use Gronwall's inequality to estimate $E$ and get a similar inequality but with a different (but still $n$-independent) constant in place of $(T-t)$. The challenge in the non-convex regime is that this lower bound on $D^2V$ is difficult to obtain. In Section \ref{sec.nonconvex} we present some results based on different FBSDE arguments and certain smallness conditions in a non-convex regime, as long as $L^i(x,a)$ is sufficiently convex in $a$ relative to the non-convexity of $F$ and $G$ and/or the length of the time horizon $T$.


By quite different arguments using the stochastic maximum principle in place of the PDEs, we give in Theorem \ref{thm.est3} an $L^2$-estimate between the optimal controls in the full-information and distributed problem. Consider the solutions $(\bX,\bY,\bZ)$ and $(\overline{\bX},\overline{\bY},\overline{\bZ})$ of the FBSDEs corresponding to the full-information and distributed problems; the former is standard, and the latter was mentioned in \eqref{eq.mpdist-intro}  above (in the Cole-Hopf case). The idea of the proof of Theorem \ref{thm.est3}  is to apply It\^o's formula to $(\bX_t - \overline{\bX}_t) \cdot (\bY_t - \overline{\bY}_t)$ and use convexity and a well-timed conditioning. Ultimately, this leads to a bound on 
\begin{align*}
\frac{1}{n}\sum_{i=1}^n \E\int_0^T |\alpha^i_t-\overline\alpha^i_t|^2\,dt
\end{align*}
by (a different constant times) the right-hand side of \eqref{intro:mainbound1}, where  $(\alpha^1,\ldots,\alpha^n)$ and  $(\overline\alpha^1,\ldots,\overline\alpha^n)$ are the optimal controls of the full-information and distributed problems, respectively.

\subsection{Outlook} \label{se:outlook}

Our work raises a number of natural questions about generalizations and variants. 

The most obvious questions of generalization pertain to the form of the dynamics and cost functions. A more general running cost might take the form $F(\bm\alpha_t,\bX_t)$ instead of $F(\bX_t) + \frac{1}{n}\sum_{i=1}^n L^i(X^i_t,\alpha^i_t)$, depending in a general way on all $n$ states and controls. At this level of generality, though, the optimal controls $(\alpha^1,\ldots,\alpha^n)$ have a less tractable structure because the $n$ Hamiltonians do not decouple; this is reminiscent of the common \emph{separability} assumption imposed on the Hamiltonian in mean field game theory.
It might be possible to allow for more general state process dynamics of the form
\begin{align}
dX^i_t = b(X^i_t,\alpha^i_t)dt + \sigma(X^i_t,\alpha^i_t)dW^i_t, \label{intro:eq:genstate}
\end{align}
for sufficiently nice $(b,\sigma)$, although this would complicate our convexity arguments. 
Still more challenging would be \emph{coupled} state processes, in which $dX^i_t$ depends directly on (some of) the other states $(X^j)_{j \neq i}$. In a sense, our methods fundamentally rely on the equivalence between ``distributed controls" and ``independent state processes," ensured by having dynamics of the form \eqref{intro:eq:genstate}. This equivalence renders $\spt^n$ the natural state space for our PDE methods.
In our view, though, the practical relevance of distributed controls is not as clear when the state processes are coupled.

It is not obvious how to extend our methods to allow for \emph{common noise}, i.e., an additional Brownian motion $dB_t$ appearing in \eqref{intro:eq:genstate} which is common among all agents $i=1,\ldots,n$.
There are then at least two reasonable choices of what ``distributed controls" should be. One choice would keep the same definition, with $\alpha^i=\alpha^i(t,X^i_t)$ depending only on the agent's own state. A second choice would allow $\alpha^i=\alpha^i(t,X^i_t,B)$ to depend on the common noise, perhaps even its history; this is a natural choice because it accommodates the approximately optimal controls that one can construct in the setting of mean field control.

Another variant of our work, which seems approachable though perhaps not as widely applicable, would replace the set $\ad$ of distributed controls by the set $\sA_{\mathrm{det}}$ of deterministic controls, where $\alpha^i : [0,T] \to \R^d$ is non-random. For such a control, the state process $\bX_t$ is Gaussian, with covariance matrix given by $t$ times the identity. If $L^i(x,a)=|a|^2/2$, then we expect that an analogous bound to \eqref{intro:mainbound1} will hold, but with $\vd$ replaced by $\inf_{\bm\alpha \in \sA_{\mathrm{det}}}J(\bm\alpha)$, and with the summation on the right-hand side now \emph{including the diagonal terms} $\partial_{ii}F$ and $\partial_{ii}G$. This is shown in \cite[Remark 2.16]{LacMukYeu} in the Cole-Hopf case. Intuitively, if the second derivatives of $F$ and $G$ are all small, then $F$ and $G$ are nearly linear, and for linear $(F,G)$ one can show that the optimal full-information control is deterministic. It is not clear, however, how to generalize this to non-quadratic $L^i$.

\subsection{Organization of the paper}

We begin in Section \ref{se:notation} by summarizing some useful notation and conventions, the most important part being Section \ref{se:analysisonP2} which reviews the calculus on $\spt$ and introduces our somewhat non-standard notation for vectors of measures and product measures.
Section \ref{se:probformulation} gives the precise assumptions and setups of the full-information and distributed control problems. It is in Section \ref{se:probformulation} that we present our verification result (Proposition \ref{prop.verification}) and stochastic maximum principle (Section \ref{se:maxprinciple}).
Section \ref{sec.approx}, the most substantial of the paper, presents the precise main results; Theorem \ref{thm.est1} gives the most general form of our estimate on $V-\vd$ as in \eqref{intro:mainbound1} above, and Theorems \ref{thm.est2} and \ref{thm.est3} give additional bounds between the state and control processes of the two control problems.
Section \ref{sec.mfc} specializes to the mean field case, proving an optimal rate for the convergence problem, and Section \ref{sec.hetero} derives an analogous result in the setting of heterogenous interactions. Finally, Section \ref{sec.nonconvex} gives some variants of our main estimates when convexity is replaced by an appropriate form of smallness.

\section{Notation and preliminaries} \label{se:notation}
This section discusses the notation and terminology we will use throughout the paper, most of which is standard but compiled here for ease of reference. 

\subsection{Probabilistic set-up}
Fix numbers $n, d \in \N$, a terminal time $T > 0$, and  a probability space $(\Omega, \sF, \bP)$ hosting a standard $n$-dimensional Brownian motion $\bW = (W^1,...,W^n)$. We write $\mathbb{F} = (\sF_t)_{0 \leq t \leq T}$ for the (augmented) filtration generated by $\bW$ and $\sF_0$, $\sF_0 \subset \sF$ being a  complete atomless $\sigma$-algebra independent of $\bW$. Write $\L(Z)$ for the law of a random variable $Z$.

\subsection{Basic notation}
We will be working frequently with the space $(\R^d)^n$. Because we will often make quantitative statements about functions on $(\R^d)^n$ and their gradients and Hessians, and we need constants which exhibit sharp dependence on the dimension $n$, we take care in this short section to be absolutely clear about certain notational conventions.

We will use bold to denote elements of this space or processes taking values in this space, e.g., $\bx$ will denote an element of $(\R^d)^n$ and $\bX$ will denote a process taking values in $(\R^d)^n$. We will use superscripts to denote the components of $\bx$ in the following way: $\bx = (x^1,...,x^n)$, where each $x^i \in \R^d$. If necessary, we can further use subscripts to write $x^i = (x^i_1,...,x^i_d)$. As a rule, we will use $i$ or $j$ to denote indices which run from $1$ to $n$, and $k$ or $l$ to denote indices which run from $1$ to $d$. 

When working with functions $u = u(\bx) : (\R^d)^n \to \R$, we denote by $D_{x^i_k} u$ the derivative of $u$ in the argument $x^i_k$, and $D_{x^i_k x^j_l} u$ the mixed partial derivative in the arguments $x^i_k$, $x^j_l$. We frequently use the more compact notation $D_i u$ and $D_{ij} u$, $1 \leq i,j \leq n$, defined as follows. $D_i u = D_i u(\bx) : (\R^d)^n \to \R^d$ is the gradient of $u$ in $x^i$, given by
\begin{align*}
D_{i} u = (D_{x^i_1} u,\ldots, D_{x^i_d} u)^T, 
\end{align*}
and similarly $D_{ij} u = D_{ij}u(\bx) : (\R^d)^n \to \R^{d \times d}$ is given by $(D_{ij} u)_{kl} = D_{x^i_k x^j_l} u$. When we write $D^2 u$, we mean the element of $\R^{nd \times nd}$ written in blocks as 
\begin{align*}
    D^2 u = \begin{pmatrix} 
    D_{11} u & \dots  & D_{1n}u \\
    \vdots & \ddots & \vdots\\
    D_{n1} u & \dots  & D_{nn} u. 
    \end{pmatrix}
\end{align*}
That is, $D^2 u$ is the usual Hessian of $u$ when viewed in a natural way as a map $\R^{nd} \to \R$. Given $\bx,\by,\bz \in (\R^d)^n$, we will abuse notation slightly by writing 
\begin{align*}
(\by)^T D^2 u(\bx) \, \bz = \sum_{i,j = 1}^n (y^i)^T D_{ij} u(\bx) \, z^j = \sum_{i,j = 1}^n \sum_{k,l = 1}^d y^i_k D_{x^i_k x^j_l} u(\bx) \, z^j_l. 
\end{align*}
We make frequent use of the usual semidefinite order between symmetric matrices; that is, $D^2 u \leq CI_{nd \times nd}$ means that for any $\bx,\by  \in (\R^d)^n$ we have 
\begin{align*}
\by^T D^2 u(\bx) \by \leq C \sum_{i = 1}^n |y^i|^2.
\end{align*}
We will use this notation to mean the same even for non-symmetric matrices.

We write $|\cdot|$ to denote the usual Euclidean norms on both $\R^d$ and $(\R^d)^n$ i.e. $|\bx|^2 = \sum_{i = 1}^n |x^i|^2 = \sum_{i = 1}^n \sum_{k = 1}^d |x^i_k|^2$. We will write $|\cdot|$ to denote the Frobenius norm, while $|\cdot|_{\text{op}}$ is the operator norm for any finite-dimensional space of matrices:
\begin{align*}
|A| = \tr(AA^\top)^{1/2}, \quad |A|_{\ope} = \sup_{|x| = 1} |Ax|. 
\end{align*}

\subsection{Spaces of functions} \label{se:functionspaces}
Let $E$ denote some Euclidean space. Given a function $u : E \to \R$, we say that $u$ is $C^j$ if the derivatives the derivatives of $u$ up to order are continuous. For $\alpha \in (0,1]$, we say that $u$ is $C^{\alpha}$, and write $u \in \calpha$ if the norm 
\begin{align*}
\|u\|_{\calpha} \coloneqq \|u\|_{\linf} + \sup_{x,x' \in E, x \neq x'} \frac{|u(x) - u(x')|}{|x - x'|}
\end{align*}
is finite. We say that $u$ is locally $C^{\alpha}$, and write $u \in \calphaloc$, if for any compact $K \subset E$ we have 
\begin{align*}
\sup_{x, x' \in K, x \neq x'} \frac{|u(x) - u(x')|}{|x - x'|} < \infty. 
\end{align*}
Likewise, given $k \in \N$ and $\alpha \in (0,1]$ we write $u \in C^{k,\alpha}$ if $u$ and its derivatives up to order $k$ are in $C^{\alpha}$, and  we say that $u \in C^{k,\alpha}_{\text{loc}}$ if $u \in C^k$ and the order $k$ derivatives are in $C^{\alpha}_{\text{loc}}$. 

For a function $u = u(t,x) : [0,T] \times E \to \R$, we say that $u$ is $C^{1,2}$ if $\partial_t u$, $Du$, and $D^2u$ exist and are continuous on $[0,T] \times \R^d$. We say that $u \in \calpha$ if the norm 
\begin{align*}
\|u\|_{\calpha} \coloneqq \|u\|_{\linf} + \sup_{t,t' \in [0,T], x, x' \in E, t \neq t', x \neq x'} \frac{|u(t,x) - u(t',x')|}{|t -t'|^{\alpha/2} + |x - x'|^{\alpha}}
\end{align*}
is finite. We say that $u \in \calphaloc$ if for each compact $K \subset E$,
\begin{align*}
\sup_{t,t' \in [0,T], x, x' \in K, t \neq t', x \neq x'} \frac{|u(t,x) - u(t',x')|}{|t -t'|^{\alpha/2} + |x - x'|^{\alpha}} < \infty. 
\end{align*}
 We say that $u \in C^{1,\alpha}$ if $u$ and $Du$ are $C^{\alpha}$. We say that $u \in C^{2,\alpha}$ if $u$, $\partial_t u$, $Du$, and $D^2u$ are $\calpha$.

\subsection{Analysis on the space of probability measures} \label{se:analysisonP2}

We will also be working with the space $\sP_2(\R^d)^n=(\sP_2(\R^d))^n$, where $\sP_2(\R^d)$ denotes the Wasserstein space of probability measures with finite second moment. We denote by $m$ a generic element of $\spt$ and by $\bm m = (m^1,\ldots,m^n)$ a generic element of $\spt^n$. We use angled brackets to denote integration when convenient: $\langle m, g \rangle := \int g \,dm$. For $i = 1,\ldots,n$, we denote by $\bm m^{-i}$ the element of $\spt^{n-1}$ given by $\bm m^{-i} = (m^1,\ldots,m^{i-1},m^{i+1},\ldots,m^n)$. In an abuse of notation, we identify elements of $\spt^n$ (or $\spt^{n-1}$) with the corresponding product measures in $\sP_2((\R^d)^n)$ (or $\sP_2((\R^d)^{n-1})$). That is, $\bm m \in \spt^n$ is identified with $m^1 \otimes \cdots \otimes m^n \in \sP_2((\R^d)^n)$, so that, for example if $f : (\R^d)^n \to \R$, we may write 
\begin{align*}
\langle \bm m, f \rangle = \int_{(\R^d)^n} f(x^1,...,x^n) \prod_{i=1}^n m^i(dx^i)
\end{align*}
whenever the integral is well-defined.
We also use the convention that with $f$, $\bm m$ as above, the expression $\langle \bm m^{-i}, f \rangle$ denotes the function $\R^d \to \R$ given by 
\begin{align} \label{anglebrackets}
\langle \bm m^{-i}, f \rangle (x) = \int_{(\R^d)^{n-1}} f(x^1,\ldots,x^{i-1},x,x^{i+1},\ldots,x^n) \prod_{j=1,\,j \neq i}^n m^j(dx^j). 
\end{align}
Likewise if $f : (\R^d)^n \to \R^k$ for some $k$, $\langle \bm m^{-i}, f \rangle : \R^d \to \R^k$ can be defined component-wise by \eqref{anglebrackets}

We will use a calculus for functions $\spt^n \to \R$ which is inherited from the calculus on $\spt$ applied coordinatewise. Given a function $\cu = \cu(m) : \sP_2(\R^d) \to \R$, we refer to \cite[Chapter 5]{cardelbook1} or \cite[Section 2.1]{CST} for the definitions of the (order 1 and 2) Lions derivatives (a.k.a.\ \emph{intrinsic} derivative)
\begin{align*}
D_m \cu = D_m \cu(m,y) : \spt \times \R^d \to \R^d, \quad D_y D_m \cu = D_y D_m \cu(m,y) : \spt \times \R^d \to \R^{d \times d}
\end{align*}
and 
\begin{align*}
D_{mm} \cu = D_{mm} \cu(m, y, \overline{y}) : \spt \times \R^d \times \R^d \to \R^{d \times d}
\end{align*}
We recall that in general $D_m \cu(m,\cdot)$ is uniquely defined only on the support of $m$, but we always fix a continuous version if it exists.  
Note that some sources (such as \cite{cardelbook1,CST}) use the notation $\partial_\mu$ where we use $D_m$.

With these definitions in hand, we can consider a continuous map $\cu = \cu(t,\bm m) = \cu(t,m^1,..,m^n) : [0,T] \times \spt^n \to \R$. We call such a map $C^{1,2}$ if the (usual) time derivative $\partial_t \cu$ exists, if for each $(t, \bm m^{-i})$ the map $m^i \mapsto \cu(t,m^1,\ldots,m^n)$ admits Lions derivatives up to order two,
and if there are versions of these derivatives which are continuous on all of $[0,T] \times \spt^n \times \R^d$.
Moreover, if $\cu$ is $C^{1,2}$ we denote by 
\begin{align*}
D_{m^i} \cu = D_{m^i} \cu(t, \bm m, y) : [0,T] \times \spt^n \to \R^d
\end{align*}
the derivative of the map $m^i \mapsto \cu(t,m^1,\ldots,m^n)$, and likewise we denote by 
\begin{align*}
D_y D_{m^i} \cu = D_y D_{m^i} \cu(t,\bm m,y) : [0,T] \times \spt^n \times \R^d \to \R^{d \times d}
\end{align*}
the derivative of the map $y \mapsto D_{m^i} \cu(t,\bm m,y)$. 

\section{Problem formulations and value functions} \label{se:probformulation}

We now give a more precise discussion of the control problem stated in the introduction.
Our data consists of the $n$ ``Lagrangian" functions
\begin{align*}
L^i = L^i(x,a) : \R^d \times \R^d \to \R, \quad i = 1,...,n, 
\end{align*}
together with the running and terminal cost functions 
\begin{align*}
    \quad F = F(\bx) : (\R^d)^n \to \R, \quad G = G(\bx) : (\R^d)^n \to \R. 
\end{align*}
It will also be convenient to work with the Hamiltonians associated to $L^i$, i.e., the maps $H^i = H^i(x,p) : \R^d \times \R^d \to \R$ defined by 
\begin{align} \label{hdef}
H^i(x,p) = \sup_{a \in \R^d} \big(- a \cdot p -  L^i(x,a)\big).
\end{align}
Under mild assumptions of convexity and regularity on $L^i$, as in the following Assumption \ref{assump.conv}, the unique optimizer in \eqref{hdef} is given by $a = -D_p H^i(x,p)$.

Our main results operate under the following assumptions on the data $(G,F,L^i)$, which will be in force throughout this section and Section \ref{sec.approx}, and which make use of the function spaces summarized in Section \ref{se:functionspaces}. 

\begin{assumption} \label{assump.conv} 
The function $F, G : (\R^d)^n \to \R$ are bounded from below and convex. Moreover, $F$ is $C^2$, $G$ is in $C^{2,\alpha}_{\text{loc}}$ for some $\alpha \in (0,1)$ and both $F$ and $G$ have bounded derivatives of order two (but not necessarily of order one).

The functions $L^i, H^i : \R^d \times \R^d \to \R$ are $C^2$ with bounded derivatives of order two (but not necessarily of order one). Moreover, $L^i$ is bounded from below and satisfies
\begin{align} \label{lsrict}
    D^2 L^i(x,a) = \begin{pmatrix} 
    D_{xx} L^i(x,a) & D_{xa} L^i(x,a) \\
    D_{ax} L^i(x,a) & D_{aa} L^i(x,a)
    \end{pmatrix}
    \geq C_L \begin{pmatrix} 0 & 0 \\
    0 & I_{d \times d}    
    \end{pmatrix}, \quad \text{for all } x,a \in \R^d
\end{align}
or equivalently 
\begin{align} \label{lstrictequiv}
\big(D_x L^i(x,a) - D_x L^i(\bar{x}, \bar{a})\big) \cdot (x - \bar{x}) + \big(D_a L^i(x,a) - D_a L^i(\bar{x}, \bar{a})\big) \cdot (a - \bar{a}) \geq C_L |a - \bar{a}|^2, 
\end{align}
for all $x,\bar{x}, a, \bar{a} \in \R^d$ and for some constant $C_L > 0$.
\end{assumption} 

\begin{remark}
Our main structural condition is the convexity of the maps $F$, $G$, and $L^i$. The fact that $F$ and $G$ have bounded second derivatives is important for our method, indeed our main estimates (Theorems \ref{thm.est1}, \ref{thm.est2}, \ref{thm.est3}) involve the second derivatives of $F$ and $G$, though see Remark \ref{re:unboundedness} for some discussion of when and how it might be relaxed. The assumption that $L^i$ and $H^i$ have bounded second derivatives is not essential, and it would be more natural to assume, e.g., that the second derivatives of $H^i$ are bounded on $\R^d \times B_R$ for each $R > 0$. Our method can be used to obtain good estimates in this setting provided that we have good Lipschitz estimates on the value function of the control problem \eqref{controlstandard}, which in turn is possible when $F$ and $G$ are Lipschitz. Since we are working in a convex setting, we find it more natural to assume $F$ and $G$ grow quadratically, and so we cannot assume our value functions are Lipschitz, which is why we enforce the boundedness of second derivatives of $L^i$ and $H^i$.
\end{remark}

\subsection{The full-information control problem} 
As in the introduction, the set $\sA$ of full-information controls is defined as the set of $\bm{\alpha} = (\alpha^1,\ldots,\alpha^n)$, where $\alpha^i : [0,T] \times (\R^d)^n \to \R^d$ is measurable for each $i$, and the SDE
\begin{align} \label{xdynamics}
dX_s^i = \alpha^i(s,\bX_s) ds + dW_s^i, \quad s \in [t,T], \quad \bX_t \sim \bm m
\end{align}
admits a unique strong solution $\bX=(X^1,\ldots,X^n)$ satisfying $\E\int_t^T|\alpha^i(s,\bX_s)|^2\,ds < \infty$, for each $(t,\bm m) \in [0,T] \times \spt^n$. Recall here that we identify $\bm m=(m^1,\ldots,m^n)$ with the product measure $m^1 \otimes \cdots \otimes m^n$, so that $\bX_t \sim \bm m$ means that $X^i_t \sim m^i$ are independent.
The process $\bX$ is called the state process associated with $\bm\alpha$ (and initial position $(t,\bm m)$).

For each $(t,\bm m) \in [0,T] \times \spt^n$ and $\bm\alpha \in \sA$, define  the cost functional
\begin{align}
\mathcal{J}(t,\bm m, \bm\alpha) := \E\bigg[\int_t^T \bigg(\frac{1}{n} \sum_{i = 1}^n L^i(X_s^i, \alpha^i(s,X_s^i)) + F(\bX_s) \bigg) ds + G(\bX_T)\bigg], \label{def:scriptJ}
\end{align}
with $\bX$ being the state process associated with $\bm\alpha$, i.e., the unique solution of \eqref{xdynamics}.
The lifted value function $\cv : [0,T] \times \spt^n \to \R$ is defined by
\begin{align}
\cv(t,\bm m) := \inf_{\bm\alpha \in \sA} \mathcal{J}(t,\bm m,\bm\alpha), \label{def:Vlift}
\end{align}
Alternatively, we can define the standard value function $V : [0,T] \times (\R^d)^n \to \R$ via non-random initial positions:
\begin{align} \label{controlstandard}
V(t,\bx) := \cv(t,\delta_{\bx}) = \inf_{\bm\alpha \in \sA} \mathcal{J}(t,\delta_{\bx}, \bm\alpha),
\end{align}
where $\delta_{\bx}$ is identified with the vector $(\delta_{x^1},\ldots,\delta_{x^n}) \in \spt^n$.
A standard dynamic programming argument yields the identity
\begin{align}
\cv(t,\bm m) = \langle \bm m, V(t,\cdot) \rangle.  \label{def:Vlift-control}
\end{align}

Under Assumption \ref{assump.conv}, the value function $V$ is well-defined and is in fact a classical solution to the HJB equation
\be\label{eq.hjbclassic}
\left\{\begin{array}{l}
\ds - \partial_t V - \frac{1}{2} \Delta V + \frac{1}{n} \sum_{i = 1}^n H^i(x^i, nD_i V) = F(\bx), \quad (t,\bx) \in [0,T) \times (\R^d)^n, \\[1.2mm]
\ds  V(T,\bx) = G(\bx), \quad \bx \in (\R^d)^n.
\end{array}\right.
\ee
It is the unique classical solution, at least in the class of functions
satisfying the growth constraint 
\begin{align} \label{hjbgrowth}
|D V(t,\bx)| \leq C(1 + |\bx|), \quad \bx \in (\R^d)^n.
\end{align}
This point is fairly standard, but it is difficult to find a reference which covers the setting when $F$ and $G$ are not Lipschitz. We provide a brief sketch here. The fact that $V$ is a viscosity solution of \eqref{eq.hjbclassic} is well-known, so the first issue is to argue that it is $C^{1,2}$, hence a classical solution. For this, one can first use control-theoretic arguments (see Lemma \ref{lem.spectral} below) to show that that $V$ is continuous and in fact is $C^{1,1}_{\text{loc}}$, and then notice that the restriction $V^R$ of $V$ to $[0,T] \times B_R$ is the unique viscosity solution of
\be\label{eq.hjbtrunc}
\left\{\begin{array}{l}
\ds - \partial_t V^R - \frac{1}{2} \Delta V^R + \frac{1}{n} \sum_{i = 1}^n H^i(x^i, nD_i V^R) = F(\bx), \quad (t,\bx) \in [0,T) \times B_R, \\[1.2mm]
\ds  V^R(T,\bx) = G(\bx), \quad \bx \in B_R, \quad 
\ds V^R(t,\bx) = V(t,\bx), \quad \bx \in \partial B_R
\end{array}\right.
\ee
Interior regularity results for linear parabolic equations (see e.g. Theorem 9 in Section 3.4 of \cite{friedman}) together with the fact that $G \in C^{2,\alpha}_{\text{loc}}$ then allow one to deduce that $V \in C^{1,2}$. Control-theoretic arguments (see Lemma \ref{lem.spectral} below) again give that $V$ satisfies the growth condition \eqref{hjbgrowth}. Uniqueness of classical solutions satisfying this growth condition is also standard, since this condition is strong enough for the usual verification argument to apply.

\subsection{The distributed control problem} 

As in the introduction, the set $\ad$ of distributed controls is defined as the set of $\bm{\alpha} = (\alpha^1,\ldots,\alpha^n) \in \sA$ such that $\alpha^i(t,\bx)=\alpha^i(t,x^i)$ depends only on the $i^\text{th}$ coordinate, for each $i$. 
We can define the \emph{distributed value function}
\begin{align} \label{controldist}
\vdt(t,\bm m) := \inf_{\bm\alpha \in \ad} \mathcal{J}(t,\bm m,\bm\alpha), 
\end{align}
In analogy with the full-information value function $V$ given in \eqref{controlstandard}, it seems natural at first to define a distributed value function $\vd : [0,T] \times (\R^d)^n \to \R$ for deterministic initial positions, via
\begin{align*}
\vd(t,\bx) := \vdt(t,\delta_{\bx}) := \inf_{\bm\alpha \in \ad} \mathcal{J}(t,\delta_{\bx},\bm\alpha).
\end{align*}
However, the dynamic programming identity \eqref{def:Vlift-control} breaks down here, and 
\begin{align*}
\vdt(t,\bm m) \neq \langle \bm m, \vd(t,\cdot) \rangle, \ \text{ in general}.
\end{align*}

It will be a consequence of Proposition \ref{prop.verification} below that, if $\vdt$ is $C^{1,2}$, then  it solves the infinite-dimensional PDE
\be\label{eq.hjbdist}
\left\{\begin{array}{l}
\ds - \partial_t \vdt - \frac{1}{2} \sum_{i=1}^n \langle m^i, \tr (D_y D_{m^i} \vdt) \rangle + \frac{1}{n} \sum_{i=1}^n \langle m^i, H^i(\cdot, nD_{m^i} \vdt) \rangle  = \langle \bm m, F \rangle  \\ 
\ds  \vdt(T,\bm m) = \langle \bm m, G \rangle,  
\end{array}\right.
\ee
for $(t, \bm m) \in [0,T) \times \spt^n$, where we abbreviated $\vdt=\vdt(t,\bm m)$ in the first line, and
\begin{align*}
\langle m^i, H^i(\cdot, nD_{m^i} \vdt) \rangle &:= \int_{\R^d} H^i(y, nD_{m^i} \vdt(t,\bm m,y ))\,m^i(dy), \\
\langle m^i, \tr (D_y D_{m^i} \vdt) \rangle &:= \int_{\R^d}\tr (D_y D_{m^i} \vdt(t,\bm m,y ))\,m^i(dy).
\end{align*}

\subsection{The PDE approach} \label{sec.dpp}

In this section we provide a verification result connecting the distributed control problem \eqref{controldist} with the equation \eqref{eq.hjbdist}. 
In fact, our main estimates in Section \ref{sec.approx} will make no use of the PDE \eqref{eq.hjbdist}, favoring the control-theoretic definition of $\vdt$. 
However, we will need crucially a verification theorem for the PDE associated with the lifted version $\cv(t,\bm m)$ of the full-information value function.
With this in mind, we present in this section a verification theorem for a generalized distributed control problem which will encompass both $\vdt$ and $\cv$.

Suppose we are given functions $\sF, \sG : \spt^n \to \R$.
Consider the value function $\widehat{\cv} = \widehat{\cv}(t,\bm m) : [0,T] \times \spt^n$  
given by 
\begin{equation} \label{vgendef}
\widehat{\cv}(t,\bm m) = \inf_{\bm\alpha \in \ad} \widehat{\mathcal{J}}(t,\bm m,\bm\alpha), 
\end{equation}
where 
\begin{equation*}
\widehat{\mathcal{J}}(t,\bm m, \alpha) = \E\bigg[\int_t^T \bigg(\frac{1}{n} \sum_{i = 1}^n L^i(X_s^i, \alpha^i(s,X_s^i)) + \sF(\bm m_s) \bigg) ds + \sG(\bm m_T) \bigg]
\end{equation*}
with $\bX = (X^1,...,X^n)$ being defined as in \eqref{xdynamics}.
Note that $\vdt$ arises as a special case of this problem by setting  $\sF(\bm m) = \langle \bm m, F \rangle$ and  $\sG(\bm m) = \langle \bm m , G \rangle$. 
The relevant PDE is then 
\be\label{eq.hjbgen}
\left\{\begin{array}{l}
\ds - \partial_t \widehat{\cv} - \frac{1}{2} \sum_{i=1}^n \langle m^i, \tr (D_y D_{m^i} \widehat{\cv}) \rangle + \frac{1}{n} \sum_{i=1}^n \langle m^i, H^i(y, nD_{m^i} \widehat{\cv}) \rangle  = \sF(\bm m)  \\ 
\ds  \widehat{\cv}(T,\bm m) = \sG(\bm m_T),
\end{array}\right.
\ee
for $(t, \bm m) \in [0,T] \times \spt^n$. Before stating our verification result, we first state the relevant version It\^o's formula needed for its proof, which is a straightforward extension of the $n=1$ case given in \cite[Theorem 5.92]{cardelbook1}.

\begin{lemma} \label{lem.ito}
Let $\cu = \cu(t, \bm m) : [0,T] \times \spt^n \to \R$ be $C^{1,2}$, and let $\bX_t = (X^1,...,X^n)$ be an It\^o process of the form 
\begin{align*}
dX_t^i = b_t^i dt + dW_t^i, \quad X_0 = \xi \in L^2
\end{align*}
with $\E \int_0^T |b_t|^2 dt  < \infty$.
Suppose that for each  bounded set $K \subset \spt^n$  we have
\begin{align} \label{itosuff}
\sup_{t \in [0,T]} \sup_{\bm m \in K} \langle m^i, |D_y D_{m^i} \cu(t,\bm m,\cdot)|^2 \rangle < \infty, \quad i=1,\ldots,n. 
\end{align}
Set $\bm m_t = (\sL(X_t^1),...,\sL(X_t^n))$. Then 
\begin{align*}
\frac{d}{dt} &\cu(t,\bm m_t) = \partial_t \cu(t,\bm m_t) + \E\bigg[\frac{1}{2} \sum_{i = 1}^n \tr( D_y D_{m^i} \cu(t,\bm m_t, X_t^i))  + \sum_{i = 1}^nD_{m^i} \cu(t,\bm m_t, X_t^i) \cdot b_t^i\bigg]. 
\end{align*}
In particular, if $b_t^i = b^i(t,X_t^i)$, and we write $\bm m_t = (\sL(X_t^1),...,\sL(X_t^n))$, we have 
\begin{align*}
\frac{d}{dt} \cu(t,\bm m_t) = \partial_t \cu(t,\bm m_t) + \frac{1}{2} \sum_{i = 1}^n \langle m^i, \tr(D_y D_{m^i} \cu(t,\bm m_t, \cdot) ) \rangle + \sum_{i = 1}^n \langle m^i, b(t, \cdot) \cdot  D_{m^i} \cu(t,\bm m_t, \cdot) \rangle. 
\end{align*}
\end{lemma}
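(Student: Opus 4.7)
My plan is to reduce to the single-measure ($n=1$) Itô formula of \cite[Theorem 5.92]{cardelbook1} by a telescoping argument that peels off one coordinate at a time. The crucial structural feature making this work is that the Brownian motions $W^1,\ldots,W^n$ are independent and the diffusion coefficient is the identity, so the ``quadratic covariation'' between distinct coordinates vanishes and no cross-derivative $D_{m^i m^j}$ terms (with $i\neq j$) can appear; only the diagonal terms $D_y D_{m^i}\cu$ arise.

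Concretely, I would fix $t \in [0,T)$ and a small increment $h > 0$, and write
\begin{align*}
\cu(t+h, \bm m_{t+h}) - \cu(t, \bm m_t) = \big[\cu(t+h, \bm m_t) - \cu(t, \bm m_t)\big] + \sum_{i=1}^n \Delta^i_h,
\end{align*}
where $\Delta^i_h := \cu(t+h, m^1_{t+h}, \ldots, m^{i}_{t+h}, m^{i+1}_t, \ldots, m^n_t) - \cu(t+h, m^1_{t+h}, \ldots, m^{i-1}_{t+h}, m^{i}_t, m^{i+1}_t, \ldots, m^n_t)$. The first bracket contributes $\partial_t\cu(t,\bm m_t)\,h + o(h)$ by the $C^1$ regularity in time. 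For each $i$, with all other measure arguments frozen at their displayed values, the function $m \mapsto \cu(t+h, \ldots, m, \ldots)$ is $C^2$ in the Lions sense by hypothesis. Applying the single-measure Itô formula of \cite[Theorem 5.92]{cardelbook1} to this function along the curve $s \mapsto \sL(X^i_s)$ on $[t, t+h]$ gives
\begin{align*}
\Delta^i_h = \int_t^{t+h}\E\Big[\tfrac12 \tr\big(D_y D_{m^i}\cu(t+h, \ldots, X^i_s)\big) + D_{m^i}\cu(t+h, \ldots, X^i_s) \cdot b^i_s\Big] ds,
\end{align*}
with the suppressed arguments being the appropriate mixture of $m^j_{t+h}$ and $m^j_t$. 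Dividing by $h$, sending $h \downarrow 0$, and using the joint continuity of $\partial_t\cu$, $D_{m^i}\cu$, and $D_y D_{m^i}\cu$ on $[0,T] \times \spt^n \times \R^d$, together with the $L^2$-continuity of $s \mapsto X^i_s$ and the hypothesis \eqref{itosuff}, one passes to the limit under the expectation to obtain the claimed identity.

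The ``in particular'' part follows at once by substitution: with $b^i_s = b^i(s, X^i_s)$, the identity $X^i_s \sim m^i_s$ converts the expectations into integrals against $m^i_s$, producing the stated coordinate-wise angle-bracket form. The main technical obstacle is justifying the interchange of limit and expectation in $\lim_{h \to 0} \Delta^i_h/h$; this reduces to verifying uniform integrability of the relevant integrands as $s$ varies in a bounded interval and $\bm m_s$ remains in a bounded subset of $\spt^n$, which follows from \eqref{itosuff}, the $L^2$ bound $\E\int_0^T |b_t|^2\,dt < \infty$, and the consequent $\sup_{t \in [0,T]}\sum_i \int |y|^2\,m^i_t(dy) < \infty$. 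Since the lemma is advertised as a ``straightforward extension,'' no further novelty is required beyond the telescoping and a careful bookkeeping of the auxiliary arguments.
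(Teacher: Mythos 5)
Your proposal is correct and is essentially the argument the paper intends: the paper states Lemma \ref{lem.ito} without proof, calling it a straightforward extension of the $n=1$ formula in \cite[Theorem 5.92]{cardelbook1}, and your coordinate-wise telescoping plus application of that single-measure chain rule on $[t,t+h]$ is exactly the natural way to realize that extension. The only caveats are the ones you already flag (uniform integrability under \eqref{itosuff} and, for merely square-integrable $b$, interpreting $\frac{d}{dt}$ a.e.\ or in integrated form via Lebesgue differentiation), which match the level of rigor of the cited $n=1$ result.
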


With this lemma in hand, we now give the verification result which connects the partial information control problem \eqref{vgendef} to the PDE \eqref{eq.hjbgen}.

\begin{proposition} \label{prop.verification}
Suppose that $H^i$ satisfies the conditions appearing in Assumption \ref{assump.conv} and $\sF$, $\sG$ 
are continuous functions $\spt^n \to \R$. Suppose further that $\cu : [0,T] \times \spt^n \to \R$ is $C^{1,2}$ and satisfies the PDE \eqref{eq.hjbgen} as well as \eqref{itosuff} for each bounded set $K \subset \spt^n$.
Finally, suppose that for each $t \in [0,T]$ and $\bm m \in \spt^n$ the McKean-Vlasov SDE 
\begin{align*}
    dX_s^i &= - D_p H(X_s^i, nD_{m^i} \cu(s,\bm m_s,X_s^i)) ds + dW_s^i, \ \ s \in [t,T], \quad \bX_t \sim \bm m , \\
    \bm m_s &= (\sL(X_s^1),...,\sL(X_s^n))
\end{align*} 
admits a strong solution such that 
\begin{align} \label{feedbackdef}
\alpha^i(s,x) \coloneqq - D_pH^i(x, nD_{m^i} \cu(s,\bm m_s, x))
\end{align}
is admissible, i.e., $(\alpha^1,\ldots,\alpha^n) \in \ad$. Then in fact $\cu = \widehat{\cv}$, and the control  \eqref{feedbackdef} is an optimizer for \eqref{vgendef}, unique in the sense that any other optimizer $(\beta^1,\ldots,\beta^n) \in \ad$ satisfies $\alpha^i = \beta^i$ a.e.\ on $[t,T] \times \R^d$ for each $i=1,\ldots,n$.
\end{proposition}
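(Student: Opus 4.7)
The plan is to execute a standard verification argument, but lifted to the infinite-dimensional setting using the measure-valued Itô formula in Lemma \ref{lem.ito}. Fix $(t,\bm m) \in [0,T] \times \spt^n$ and let $\bm\beta \in \ad$ be an arbitrary admissible distributed control, with associated state process $\bX$ solving \eqref{xdynamics} and deterministic marginal flow $\bm m_s = (\sL(X^1_s),\ldots,\sL(X^n_s))$. Admissibility of $\bm\beta$ guarantees the standard a priori $L^2$ bound $\E|X^i_s|^2 \le C + C\E\int_t^s|\beta^i|^2$, so $(\bm m_s)_{s \in [t,T]}$ is confined to a bounded subset of $\spt^n$; combined with the hypothesis \eqref{itosuff} this validates the application of Lemma \ref{lem.ito} to $\cu(s, \bm m_s)$.

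Applying Lemma \ref{lem.ito} and substituting the PDE \eqref{eq.hjbgen} to eliminate $\partial_s \cu$ and the trace term, and then using the terminal condition $\cu(T,\bm m_T) = \sG(\bm m_T)$, I obtain
\begin{align*}
\sG(\bm m_T) - \cu(t,\bm m) = \int_t^T\!\!\Big\{ \sum_{i=1}^n \big\langle m^i_s,\, \tfrac{1}{n} H^i(\cdot, n D_{m^i}\cu) + \beta^i(s,\cdot)\cdot D_{m^i}\cu \big\rangle - \sF(\bm m_s) \Big\} ds,
\end{align*}
where $\cu$ and its derivatives are evaluated at $(s,\bm m_s)$ and the integrands inside $\langle m^i_s, \cdot\rangle$ at $y$. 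The decisive step is the Fenchel-Young inequality: for every $(s,y)$ and every $a \in \R^d$,
\begin{align*}
\tfrac{1}{n} H^i(y, n D_{m^i}\cu(s,\bm m_s,y)) + a \cdot D_{m^i}\cu(s,\bm m_s,y) \ge -\tfrac{1}{n} L^i(y,a),
\end{align*}
with equality if and only if $a = -D_p H^i(y, nD_{m^i}\cu(s,\bm m_s,y))$, by the strict convexity of $L^i$ in $a$ built into Assumption \ref{assump.conv}. Applying this pointwise with $a=\beta^i(s,y)$, integrating against $m^i_s(dy)\,ds$, and rearranging yields $\cu(t,\bm m) \le \widehat{\mathcal{J}}(t,\bm m, \bm\beta)$. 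Running the same computation for the feedback $\bm\alpha$ defined in \eqref{feedbackdef}—whose existence as an admissible control is part of the hypothesis—Fenchel-Young holds with equality throughout, so $\cu(t,\bm m) = \widehat{\mathcal{J}}(t,\bm m,\bm\alpha) \ge \widehat{\cv}(t,\bm m)$. Minimizing over $\bm\beta$ in the first inequality completes the identification $\cu = \widehat{\cv}$ and shows $\bm\alpha$ is optimal.

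For uniqueness, any other optimizer $\bm\beta$ produces equality in the chain above, which forces the pointwise Fenchel-Young inequality to be an equality for $ds \otimes m^{\beta,i}_s(dy)$-a.e.\ $(s,y)$, where $\bm m^\beta_s$ is the marginal flow under $\bm\beta$. By strict convexity this pins down $\beta^i(s,y) = -D_p H^i(y, nD_{m^i}\cu(s,\bm m^\beta_s,y))$ on that set. The main obstacle here is converting this statement into the claimed a.e.\ equality on $[t,T] \times \R^d$: one needs $m^{\beta,i}_s$ to have full (Lebesgue) support for a.e.\ $s \in (t,T]$, which follows from standard Gaussian density lower bounds for SDEs with $L^2$ drift and non-degenerate Brownian noise (a Girsanov/heat-kernel comparison). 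With that, $\beta^i$ and the corresponding expression agree a.e.\ on $[t,T] \times \R^d$; since $\bm\alpha$ satisfies the analogous identity (with $\bm m^\alpha_s$) and the McKean-Vlasov SDE from the hypothesis determines the flow, the two optimal controls coincide a.e.
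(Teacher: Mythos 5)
Your proposal is correct and follows essentially the same route as the paper's proof: apply the measure-valued It\^o formula of Lemma \ref{lem.ito} along the controlled flow, substitute the PDE \eqref{eq.hjbgen}, and invoke the Fenchel--Young inequality defining $H^i$, with equality precisely at the feedback \eqref{feedbackdef}. Your uniqueness discussion (Girsanov/full-support to upgrade the $m^{\beta,i}_s$-a.e.\ equality to Lebesgue-a.e., plus identification of the flows) is in fact more explicit than the paper's one-line ``equality if and only if'' remark, and is consistent with it.
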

\begin{proof}
Fix a candidate control $\bm\alpha \in \ad$, and denote by $\bX$ the corresponding state process: 
\begin{align*}
X_s^i = X^i_t + \int_t^s \alpha^i(u,X_u^i) du + (W_s^i - W_t^i), \quad t \leq s \leq T.
\end{align*}
Set $\bm m_s=(m^1_s,\ldots,m^n_s) = (\sL(X_s^1),...,\sL(X_s^n))$, and use It\^o's formula in the form of Lemma \ref{lem.ito}, along with the PDE \eqref{eq.hjbdist} satisfied by $\cu$ and the definition of $H^i$, to compute 
\begin{align*}
\frac{d}{ds} \cu(s,\bm m_s) &= \partial_t \cu(s,\bm m_s) + \frac{1}{2} \sum_{i = 1}^n \langle m^i_s, \tr(D_y D_{m^i} \cu(s,\bm m_s, \cdot)) \rangle +    \sum_{i = 1}^n \langle m^i_s, \alpha^i(s, \cdot) \cdot D_{m^i} \cu(s,\bm m_s, \cdot) \rangle \\
&= \frac{1}{n} \sum_{i = 1}^n \langle m^i_s, H^i(\cdot, nD_{m^i} \cu(s,\bm m_s, \cdot)) 
 + \alpha^i(s, \cdot) \cdot  nD_{m^i} \cu(s,\bm m_s, \cdot) \rangle - \sF(\bm m_s) \\
 &= \frac{1}{n}\sum_{i = 1}^n \langle m^i_s, H^i(\cdot, nD_{m^i} \cu(s,\bm m_s, \cdot)) 
 + \alpha^i(s, \cdot) \cdot n D_{m^i} \cu(s,\bm m_s, \cdot) + L^i(\cdot, \alpha^i(s,\cdot)) \rangle 
 \\ & \hspace{1cm} - \frac{1}{n} \sum_{i = 1}^n \langle m^i, L^i(\cdot, \alpha^i(s,\cdot) \rangle - \sF(\bm m_s) \\
 &\geq - \frac{1}{n} \sum_{i = 1}^n \langle m^i, L^i(\cdot, \alpha^i(s,\cdot) )\rangle - \sF(\bm m_s), 
\end{align*}
with equality if and only if $\alpha^i$ satisfies \eqref{feedbackdef}. Integrating this differential inequality, we find that
\begin{align*}
\cu(t,\bm m_t) &\leq \int_t^T \bigg(\frac{1}{n} \sum_{i = 1}^n \langle m^i_s, L^i(\cdot, \alpha^i(s,\cdot)) \rangle + \sF(\bm m_s) \bigg) ds + \cu(T, \bm m_T) \\
&= \E\bigg[\int_t^T \bigg(\frac{1}{n} \sum_{i = 1}^n L^i(X_s^i, \alpha^i(s,X_s^i)) + \sF(\bm m_s) \bigg) ds + \sG(\bm m_T)\bigg], 
\end{align*}
with equality if and only if $\alpha^i$ satisfies \eqref{feedbackdef}. 
\end{proof}

\subsection{An alternative description of the optimal distributed control} \label{sec:dstr-opt-necessary}

An occasionally useful necessary condition for optimal distributed controls arises from the following simple observation: If $\bm\alpha=(\alpha^1,\ldots,\alpha^n) \in \ad$ is optimal for \eqref{controldist}, then $\alpha^i$ is optimal for a standard control problem with $\R^d$-valued state process, for each $i=1,\ldots,n$.
Specifically, given $i$, this control problem is 
\begin{align} \label{videf}
\inf_{\beta} \E\bigg[\int_t^T \bigg( \frac{1}{n} L^i(X_s, \beta(s,X_s)) + F^i(s,X_s) \bigg)ds +  G^i(X_T)\bigg], 
\end{align}
subject to 
\begin{align*}
dX_s = \beta(s,X_s) ds + dW_s^i, \quad X_t \sim m^i, 
\end{align*}
and with 
\begin{align*}
F^i(s,\cdot) = \langle \bm m_s^{-i}, F \rangle, \quad G^i(\cdot) = \langle \bm m_T^{-i}, G \rangle.
\end{align*}
Here as usual we use the notation $\bm m_s = (\sL(X_s^1),...,\sL(X_s^n))$.
Thus we must have 
\begin{align*}
\alpha^i(t,x) = - D_p H^i(t,nDv^i(t,x)), 
\end{align*}
where $v^i$ is the value function associated to the control problem \eqref{videf}, in other words the unique classical solution of 
\begin{align}
\begin{cases} \label{vieqn}
 - \partial_t v^i - \frac12\Delta v^i + \frac{1}{n} H^i(x, nD v^i) = F^i(t,x), \quad (t,x) \in [0,T) \times \R^d, \\
 v^i(T,x) = G^i(x), \quad x \in \R^d
\end{cases}
\end{align}
satisfying an appropriate growth condition.
It is worth noting that \eqref{vieqn} is  analogous to PDEs which have appeared for distributed Nash equilibria in mean field games \cite{cardaliaguetol,feleqi2013derivation,lasry2007mean}.

\subsection{The maximum principle} \label{se:maxprinciple}

As is well known \cite[Section 6.4.2]{pham2009continuous}, the optimizers of the full-information control problem \eqref{controlstandard} can be characterized in terms of an FBSDE,
\be \label{eq.mpstand}
\begin{cases}
\ds dX_t^i = - D_p H^i(X_t^i, n Y_t^i) dt + dW_t^i, \\
\ds  dY_t^i = - \bigg(\frac{1}{n}D_x L^i\big(X_t^i, -D_pH^i(X_t^i,nY_t^i)) + D_{i}F(\bX_t)\bigg) dt + \bZ_t^i  d\bW_t, \\
X_0^i = x^i, \quad Y_T^i = D_iG(\bX_T). 
\end{cases}
\ee
A solution to \eqref{eq.mpstand} is a triple of progressively measurable process $(\bX, \bY, \bZ)$ with values in $(\R^d)^n \times (\R^d)^n \times (\R^{d \times d})^{n \times n}$ satisfying \eqref{eq.mpstand} as well as
\begin{align*}
\E\bigg[ \sup_{t \in [0,T]}\big(|\bX_t|^2 + |\bY_t|^2\big) + \int_0^T |\bZ_t|^2\,dt \bigg] < \infty,
\end{align*}
where we write $\bZ = (\bZ^1,...,\bZ^n)$ with $\bZ^i = (Z^{i1},..., Z^{in})$ and $Z^{ij} \in \R^{d \times d}$, and $\bZ_t^i  d \bW_t = \sum_{j = 1}^n Z_t^{ij}  dW_t^j$.

The aim of this section is to show that optimizers of \eqref{controldist} can be characterized via a similar equation, in particular the McKean-Vlasov FBSDE 
\be \label{eq.mpdist}
\begin{cases}
\ds dX_t^i = - D_p H^i(X_t^i, n Y_t^i) dt + dW_t^i, \
\ds \\
\ds dY_t^i = -\bigg( \frac{1}{n}D_x L^i\big(X_t^i, -D_pH^i(X_t^i,nY_t^i)) + \sF^i(X_t^i, \bm m_t)\bigg) dt + Z_t^i dW_t^i, \\
X_0^i = \xi^i \sim m^i, \quad Y_T^i = \sG^i(X_T^i, \bm m_T),  \qquad \bm m_t = (\sL(X_t^1),...,\sL(X_t^n)),
\end{cases}
\ee
where we define $\sF^i, \sG^i : \R^d \times \spt^n \to \R^d$ given by
\begin{align*}
\sF^i(x,\bm m) = \langle \bm m^{-i}, D_i F \rangle(x), \quad \sG^i(x,\bm m) = \langle \bm m^{-i}, D_i G \rangle(x)
\end{align*}
and $(\xi^i)_{i = 1,...,n}$ are independent square-integrable $\sF_0$-measurable initial conditions.
A solution process $(\bX, \bY, \bZ)$, which is again implicitly assumed to be square-integrable on $[0,T] \times \Omega$, this time takes values in $(\R^d)^n \times (\R^d)^n \times (\R^{d \times d})^n$. A notable difference between \eqref{eq.mpstand} and \eqref{eq.mpdist} is that the $\bZ$ process in the latter has only $n$ instead of $n^2$ components, because the equations $i=1,\ldots,n$ are decoupled.

In order to streamline the presentation, it will be helpful to introduce an open-loop formulation of the distributed control problem, which in the end will be equivalent under Assumption \ref{assump.conv}. More precisely, treating the initial state $\bm \xi = (\xi^1,...,\xi^n)$ as fixed, we consider the problem 
\begin{align} \label{controldistol}
\inf_{\bm\alpha }   \jol(\bm\alpha),   \qquad \jol(\bm\alpha) := \E\bigg[\int_0^T \bigg(\frac{1}{n} \sum_{i = 1}^n L^i(X_t^i, \alpha_t^i) ds + F(\bX_t) \bigg)dt + G(\bX_T) \bigg], 
\end{align}
with the infimum taken over all square-integrable, adapted process $\bm\alpha=(\alpha^1,\ldots,\alpha^n)$ such that $\alpha^i$ is adapted to the  augmented filtration $\mathbb{F}^i$ generated by $W^i$ and $\xi^i$, and $X^i$ is given by 
\begin{align*}
X^i_t = \xi^i + \int_0^t \alpha_s^i ds + W_t^i.
\end{align*}
We start by showing that the FBSDE \eqref{eq.mpdist} is a necessary condition for optimality. 

\begin{proposition} \label{prop.mpnec}
Suppose that Assumption \ref{assump.conv} holds, and that $\bm\alpha=(\alpha^1,\ldots,\alpha^n)$ is a minimizer of \eqref{controldistol}. Then there is a solution $(\bX,\bY,\bZ)$ of \eqref{eq.mpdist} such that $\alpha^i_t = - D_p H(X_t^i, n Y_t^i)$ a.s.\ for a.e.\ $t$ and each $i=1,\ldots,n$.
\end{proposition}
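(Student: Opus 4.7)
The plan is to apply a Pontryagin-style maximum principle in the direction of perturbations that respect the distributed filtration structure. Fix $i \in \{1,\ldots,n\}$ and let $\beta$ be a bounded $\mathbb{F}^i$-adapted process. Consider the perturbed control $\bm\alpha^\varepsilon$ that equals $\bm\alpha$ except in coordinate $i$, where it equals $\alpha^i + \varepsilon\beta$; the first-order variation of the state process is $\eta^i_t := \int_0^t \beta_s\,ds$ and $\eta^j \equiv 0$ for $j \neq i$, because only the $i$-th coordinate of the state changes. The boundedness of the second derivatives of $L^i$, $F$, and $G$ from Assumption \ref{assump.conv}, combined with square-integrability of $\alpha^i$ and $\bX$, justifies differentiating $\jol(\bm\alpha^\varepsilon)$ at $\varepsilon = 0$, producing the first-order condition
\begin{equation}
\E\int_0^T \Big(\tfrac{1}{n}D_x L^i(X^i_t,\alpha^i_t)\cdot\eta^i_t + \tfrac{1}{n}D_a L^i(X^i_t,\alpha^i_t)\cdot\beta_t + D_i F(\bX_t)\cdot\eta^i_t\Big)\,dt + \E\big[D_iG(\bX_T)\cdot\eta^i_T\big] = 0. \label{eq:mpnec-foc}
\end{equation}

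Next I construct the adjoint. A key structural fact is that, since the components of $\bm\alpha$ are separately adapted to the independent filtrations $\mathbb{F}^1,\ldots,\mathbb{F}^n$, the processes $(X^j)_{j\neq i}$ are independent of $\mathbb{F}^i$. Consequently, conditioning on $\mathcal{F}^i_t$ integrates out $(X^j_t)_{j \neq i}$ against their marginal laws, giving $\E[D_iF(\bX_t)\mid\mathcal{F}^i_t] = \sF^i(X^i_t, \bm m_t)$ and analogously $\E[D_iG(\bX_T)\mid\mathcal{F}^i_T] = \sG^i(X^i_T, \bm m_T)$. Define
\begin{equation*}
Y^i_t := \E\bigg[\sG^i(X^i_T,\bm m_T) + \int_t^T \Big(\tfrac{1}{n}D_xL^i(X^i_s,\alpha^i_s) + \sF^i(X^i_s,\bm m_s)\Big)\,ds\,\Big|\,\mathcal{F}^i_t\bigg].
\end{equation*}
Since $\mathbb{F}^i$ is the Brownian filtration of $W^i$ augmented by the independent initial datum $\xi^i$, the martingale representation theorem furnishes an $\mathbb{F}^i$-adapted $Z^i$ such that $(Y^i,Z^i)$ solves the backward equation in \eqref{eq.mpdist} with terminal condition $Y^i_T = \sG^i(X^i_T,\bm m_T)$.

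To conclude, I integrate by parts in $Y^i_t \eta^i_t$: since $\eta^i$ is of bounded variation with $\eta^i_0 = 0$, the cross-variation vanishes and taking expectations gives
\begin{equation*}
\E[Y^i_T \eta^i_T] = \E\int_0^T Y^i_t\cdot\beta_t\,dt - \E\int_0^T\Big(\tfrac{1}{n}D_xL^i(X^i_t,\alpha^i_t) + \sF^i(X^i_t,\bm m_t)\Big)\cdot\eta^i_t\,dt.
\end{equation*}
Substituting into \eqref{eq:mpnec-foc} and using $\mathcal{F}^i_t$-measurability of $\eta^i_t$ together with the tower property to replace $D_iF$ and $D_iG$ by their conditional expectations $\sF^i$ and $\sG^i$, the $\eta^i$-terms cancel and one is left with
\begin{equation*}
\E\int_0^T\Big(\tfrac{1}{n}D_aL^i(X^i_t,\alpha^i_t) + Y^i_t\Big)\cdot\beta_t\,dt = 0
\end{equation*}
for every bounded $\mathbb{F}^i$-adapted $\beta$. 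Since the bracketed quantity is itself $\mathbb{F}^i$-adapted, it must vanish a.s.\ for a.e.\ $t$, giving $D_aL^i(X^i_t,\alpha^i_t) = -nY^i_t$; inverting via the strict convexity of $L^i$ in $a$ encoded in \eqref{lsrict} then yields $\alpha^i_t = -D_pH^i(X^i_t, nY^i_t)$, which closes the forward equation in \eqref{eq.mpdist}. The main technical obstacle is bookkeeping the integrability needed to differentiate $\jol$ and to justify the integration by parts; linear growth of the first derivatives of $L^i$, $F$, $G$ (implied by their bounded second derivatives) makes all the relevant terms controllable by square-integrability of $\bX$ and $\bm\alpha$, which should suffice.
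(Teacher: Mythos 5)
Your proposal is correct and rests on the same key observation as the paper's proof: by independence of the coordinate filtrations, conditioning on $\mathcal{F}^i_t$ replaces $D_iF(\bX_t)$ and $D_iG(\bX_T)$ by $\sF^i(X^i_t,\bm m_t)$ and $\sG^i(X^i_T,\bm m_T)$, so that agent $i$'s optimality is exactly optimality for a standard control problem with the other marginal flows frozen. The paper performs this reduction and then simply cites the standard stochastic maximum principle for the resulting single-agent problem, whereas you rederive that principle by hand (convex perturbation, adjoint via conditional expectation and martingale representation, integration by parts, inversion through strict convexity of $L^i$ in $a$); the content is the same.
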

\begin{proof}
The proof follows from the same observation which led to Lemma \ref{lem.vichar}: Let $\bm\alpha$ be optimal for \eqref{controldistol}, and let $\bX^*$ be the corresponding optimal state process. Define $\bm m_t = (\sL(X^{*,1}_t),...,\sL(X^{*,n}_t))$ and
\begin{align*}
F^i(t,\cdot) = \langle \bm m_t^{-i}, F \rangle, \quad G^i(\cdot) = \langle \bm m_T^{-i}, G \rangle.
\end{align*}
Then, for each $i$, $\alpha^i$ must be optimal for the control problem 
\begin{align} \label{olprob}
\inf_{\beta} \E\bigg[\int_0^T \bigg( \frac{1}{n} L^i(X_s, \beta_s) + F^i(s,X_s) \bigg)ds +  G^i(X_T)\bigg], 
\end{align}
subject to  $dX_t = \beta_t \, dt + dW_t^i$ and $X_0= \xi^i$.
The result then follows from the standard stochastic maximum principle, see for example \cite[Theorem 4.12]{carmonabsde}.
\end{proof}

We now show that in fact any solution of \eqref{eq.mpdist} yields an optimizer of \eqref{controldist}.

\begin{proposition} \label{prop.mpsuff}
Suppose that Assumption \ref{assump.conv} holds and that $(\bX,\bY,\bZ)$ is a solution of \eqref{eq.mpdist}. Then $\alpha_t^i = - D_pH^i(X_t^i, n Y_t^i)$ for $i=1,\ldots,n$ defines an optimizer for \eqref{controldistol}. 
\end{proposition}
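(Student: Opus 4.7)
The plan is the classical convexity-based sufficient stochastic maximum principle, adapted to the distributed-information constraint. Fix any competitor $\bm\beta = (\beta^1,\ldots,\beta^n) \in \ad$ (starting from the same initial condition $\bm\xi$) and denote its state by $\bX'$. By the convexity hypotheses in Assumption \ref{assump.conv}, the pointwise lower bounds
\begin{align*}
L^i(X_t^{',i},\beta_t^i) - L^i(X_t^i,\alpha_t^i) &\geq D_x L^i(X_t^i,\alpha_t^i)\cdot(X_t^{',i}-X_t^i) + D_a L^i(X_t^i,\alpha_t^i)\cdot(\beta_t^i-\alpha_t^i), \\
F(\bX_t') - F(\bX_t) &\geq \sum_{i=1}^n D_i F(\bX_t)\cdot(X_t^{',i}-X_t^i), \\
G(\bX_T') - G(\bX_T) &\geq \sum_{i=1}^n D_i G(\bX_T)\cdot(X_T^{',i}-X_T^i)
\end{align*}
hold, and Legendre duality between $L^i$ and $H^i$ furnishes the identity $D_a L^i(X_t^i,\alpha_t^i) = -nY_t^i$ at the choice $\alpha_t^i = -D_p H^i(X_t^i,nY_t^i)$.

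The core computation is It\^o's formula applied to $\sum_{i=1}^n (X_t^{',i}-X_t^i)\cdot Y_t^i$. Since $\bX$ and $\bX'$ share the Brownian diffusion $W^i$, the difference $X^{',i}-X^i$ has zero quadratic variation, so using the dynamics of $Y^i$ in \eqref{eq.mpdist}, the initial condition $X_0^{',i}-X_0^i=0$, and the terminal condition $Y_T^i = \sG^i(X_T^i,\bm m_T)$, taking expectation and discarding the $\bZ$-martingale (justified by square-integrability of $\bZ$ and $\bX,\bX'$, with localization if needed) yields
\begin{align*}
\E\sum_{i=1}^n \sG^i(X_T^i,\bm m_T)\cdot(X_T^{',i}-X_T^i) = \E\int_0^T\sum_{i=1}^n\bigg[Y_t^i\cdot(\beta_t^i-\alpha_t^i) - (X_t^{',i}-X_t^i)\cdot\Big(\tfrac{1}{n}D_x L^i(X_t^i,\alpha_t^i)+\sF^i(X_t^i,\bm m_t)\Big)\bigg]dt.
\end{align*}
The convexity bounds connect to this identity via conditioning: because $\bm\beta \in \ad$ and the FBSDE states $(X^i)_{i=1}^n$ are mutually independent (justified below), the tower property gives $\E[D_i F(\bX_t)\cdot(X_t^{',i}-X_t^i)] = \E[\sF^i(X_t^i,\bm m_t)\cdot(X_t^{',i}-X_t^i)]$ and similarly $\E[D_i G(\bX_T)\cdot(X_T^{',i}-X_T^i)] = \E[\sG^i(X_T^i,\bm m_T)\cdot(X_T^{',i}-X_T^i)]$. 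Combining, the $\tfrac{1}{n}D_x L^i$ and $\sF^i$ contributions cancel, and the relation $D_a L^i\cdot(\beta^i-\alpha^i) = -nY^i\cdot(\beta^i-\alpha^i)$ cancels the $Y^i\cdot(\beta^i-\alpha^i)$ terms, producing $\jol(\bm\beta) \geq \jol(\bm\alpha)$.

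The main delicate point, and the place where care is needed, is the mutual independence of $(X^1,\ldots,X^n)$ used in the conditioning step, which simultaneously justifies that $\bm\alpha$ is itself admissible. For the competitor this is automatic from the definition of $\ad$. For the FBSDE solution the key observation is that the deterministic $\bm m_t$ is the only coupling in \eqref{eq.mpdist}, so the $i$-th block involves only $(X^i,Y^i,Z^i,W^i,\xi^i)$ and deterministic data; by uniqueness for decoupled BSDEs, $(X^i,Y^i,Z^i)$ is $\mathbb{F}^i$-adapted, the $X^i$ are independent, and $\alpha^i = -D_p H^i(X^i,nY^i) \in \ad$. The integrability needed to treat the stochastic integrals as true martingales then follows from the standing square-integrability of $\bZ$ together with the Lipschitz-in-$p$ character of $D_p H^i$ guaranteed by Assumption \ref{assump.conv}.
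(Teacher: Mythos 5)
Your proof is correct and follows essentially the same route as the paper's: first-order convexity bounds for $L^i$, $F$, $G$, conditioning on $\F^i_t$ (using the distributed competitor and the independence of the blocks) to replace $D_iF$, $D_iG$ by $\sF^i$, $\sG^i$, It\^o's formula applied to $(X^{',i}_t-X^i_t)\cdot Y^i_t$, and the duality relation $Y^i_t=-\tfrac{1}{n}D_aL^i(X^i_t,\alpha^i_t)$ coming from $\alpha^i_t$ optimizing the Hamiltonian. The only difference is that you supply a uniqueness-based justification for the $\mathbb{F}^i$-adaptedness and independence of the FBSDE blocks, which the paper simply asserts; this is a harmless, slightly more careful addition rather than a different argument.
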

\begin{proof}
Let $\bm\alpha=(\alpha^1,\ldots,\alpha^n)$ be as given in the Proposition, and let $\bm m_t=(\sL(X^1_t),\ldots,\sL(X^n_t))$. Suppose that $\bm{\overline{\alpha}} = (\overline{\alpha}^1,...,\overline{\alpha}^n)$ is any competitor, and let $\overline{\bX}$ be the corresponding state process. Note that $X^i$ and $\overline{X}^i$ are adapted to the filtration $\FF^i=(\F^i_t)_{t \in [0,T]}$ generated by $W^i$ and $\xi^i$. In particular, $X^1,\ldots,X^n$ are independent, and so
\begin{align*}
\sF^i(X^i_t,\bm m_t) &= \E[D_iF(\bX_t)\,|\,\F^i_t], \qquad Y^i_T = \sG^i(X^i_T,\bm m_T) = \E[D_iG(\bX_T)\,|\,\F^i_T].
\end{align*}
By convexity of $G$,
\begin{align*}
\E[G(\overline{\bX}_T)& - G(\bX_T)] \geq \E[DG(\bX_T) \cdot  (\overline{\bX}_T - \bX_T)] \\
&= \sum_{i = 1}^n \E[D_i G(\bX_T) \cdot   (\overline{X}_T^i - X_T^i)] \\
 &= \sum_{i = 1}^n \E[Y^i_T  \cdot  (\overline{X}_T^i - X_T^i) ] \\
 &= \sum_{i = 1}^n \E\int_0^T \bigg(Y_t^i \cdot (\overline{\alpha}_t^i - \alpha_t^i) - \frac{1}{n}(\overline{X}_t^i - X_t^i)\cdot D_x L^i(X_t^i, \alpha_t^i) -  (\overline{X}_t^i - X_t^i)\cdot \sF^i(X^i_t,\bm m_t) \bigg) dt  \\
 &= \sum_{i = 1}^n \E\int_0^T \bigg(Y_t^i \cdot (\overline{\alpha}_t^i - \alpha_t^i) - \frac{1}{n}(\overline{X}_t^i - X_t^i)\cdot D_x L^i(X_t^i, \alpha_t^i) -  (\overline{X}_t^i - X_t^i)\cdot D_iF(\bX_t) \bigg) dt .
\end{align*}
Thus 
\begin{align*}
\jol(\overline{\alpha}) - \jol(\alpha) &\geq \E \int_0^T \sum_{i=1}^n  \bigg(Y_t^i \cdot (\overline{\alpha}_t^i - \alpha_t^i) - \frac{1}{n}(\overline{X}_t^i - X_t^i)\cdot D_x L^i(X_t^i, \alpha_t^i) -  (\overline{X}_t^i - X_t^i)\cdot D_iF(\bX_t) \bigg) dt  \\ 
&\quad + \E \int_0^T \bigg( \sum_{i=1}^n \frac{1}{n} ( L^i(\overline{X}_s^i, \overline{\alpha}_s^i) -  L^i(X_s^i, \alpha_s^i) )
     +  F(\overline{\bX}_t) - F(\bX_t) \bigg) dt  \\
    & \geq 0
\end{align*}
where the last inequality uses convexity of $F$, convexity of $(x,a) \mapsto L^i(x,a)$, and the fact that $\alpha_t^i$ minimizes $a \mapsto  \tfrac{1}{n}L^i(X_t^i, a) + Y_t^i \cdot  a$. This shows that $\bm\alpha$ is optimal. 
\end{proof}

The following proposition states that \eqref{eq.mpdist} has a unique solution, and thus the maximum principle can be used to produce a solution to our distributed control problem. The proof is straightforward but tedious, and so is delayed to Appendix B.

\begin{proposition} \label{prop.mpfbsde}
Under Assumption \ref{assump.conv}, the FBSDE \eqref{eq.mpdist} has a unique solution. In particular, there is a unique optimizer for \eqref{controldistol}.
\end{proposition}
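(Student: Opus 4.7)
The plan is to use Propositions \ref{prop.mpnec} and \ref{prop.mpsuff} to reduce the well-posedness of \eqref{eq.mpdist} to the well-posedness of the open-loop distributed control problem \eqref{controldistol}. Together these propositions set up a bijection (via $\alpha^i_t = -D_p H^i(X^i_t, nY^i_t)$) between solutions of \eqref{eq.mpdist} and minimizers of $\jol$ over the admissible set $\sA_{\text{OL}}$ of controls adapted to the filtrations $\mathbb{F}^i$. So it suffices to prove that $\jol$ has exactly one minimizer, and then read off uniqueness of the FBSDE.

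Uniqueness of the minimizer follows from strict convexity. Since $(x,a) \mapsto L^i(x,a) - (C_L/2)|a|^2$ is convex by \eqref{lsrict}, $F$ and $G$ are convex, and the state process $\bX$ depends affinely on $\bm\alpha$, the functional $\bm\alpha \mapsto \jol(\bm\alpha) - (C_L/2n)\sum_i \E\int_0^T |\alpha^i_t|^2 dt$ is convex on the closed convex admissible set in $L^2(\Omega \times [0,T]; (\R^d)^n)$. Hence $\jol$ is strictly convex with modulus $C_L/(2n)$ in the $L^2$-norm, and has at most one minimizer. For existence, I would apply the direct method. Taylor-expanding $L^i$ around $a=0$, using $D_{aa} L^i \geq C_L I$ along with $L^i(x,0) \geq -C$ and $|D_a L^i(x,0)| \leq C(1+|x|)$ (from bounded second derivatives), yields $L^i(x,a) \geq (C_L/4)|a|^2 - C(1+|x|^2)$. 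Combined with $F, G \ge -C$ and the Gronwall-type bound $\E|X^i_t|^2 \leq C(1 + \|\alpha^i\|_{L^2}^2)$, this yields a coercivity inequality for $\jol$ in $L^2$. A minimizing sequence is then bounded in $L^2$; passing to a weak subsequential limit, which remains admissible because adaptedness to $\mathbb{F}^i$ is preserved under weak $L^2$-limits, and using weak lower semicontinuity of the convex $\jol$, produces a minimizer.

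Existence of a solution to \eqref{eq.mpdist} now follows from Proposition \ref{prop.mpnec}. For uniqueness of FBSDE solutions, suppose $(\bX,\bY,\bZ)$ and $(\bX',\bY',\bZ')$ both solve \eqref{eq.mpdist}. Proposition \ref{prop.mpsuff} implies the induced feedback controls both minimize $\jol$, hence they agree almost surely for a.e.\ $t$. Pathwise uniqueness of the forward SDE then gives $\bX = \bX'$ and $\bm m_t = \bm m'_t$. The BSDEs for $Y^i$ and $Y'^i$ thus share identical terminal data and identical drivers, the latter being Lipschitz in $Y$ thanks to the bounded second derivatives of $H^i$ and $L^i$ in Assumption \ref{assump.conv}; standard Lipschitz BSDE theory yields $(\bY,\bZ) = (\bY',\bZ')$. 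The main technical obstacle is the coercivity step, since the $-C|x|^2$ term in the lower bound on $L^i$ must be absorbed by the $(C_L/4)|a|^2$ term after estimating $\E|X^i_t|^2$, which produces an effective coefficient of the form $C_L/4 - CT^2$ on $\|\alpha^i\|_{L^2}^2$. For large $T$ this requires a time-splitting bootstrap, or equivalently one can first replace $L^i$ by $L^i + \epsilon|x|^2$ (jointly coercive, so a minimizer exists by the standard argument), and then pass to the limit $\epsilon \to 0$ using the $\epsilon$-uniform strict convexity to identify the limit as the unique minimizer of $\jol$.
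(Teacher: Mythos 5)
Your route is genuinely different from the paper's and, in outline, it is viable. The paper proves uniqueness by expanding $d(\Delta X^i_t\cdot\Delta Y^i_t)$ directly at the FBSDE level and using convexity of $F,G$ together with \eqref{lsrict}; for existence it does not pass through the control problem at all, but instead approximates $F,G$ by Lipschitz functions, invokes the McKean--Vlasov FBSDE theory of \cite{cardelmkvfbsde} for the approximating systems, shows the approximations are Cauchy via the same monotonicity computation plus uniform moment bounds (using the PDE representation behind Lemma \ref{lem.vichar}), and finally removes an auxiliary $L^p$, $p>2$, assumption on the initial condition by truncating $\xi^i$. Your plan---existence and uniqueness of the minimizer of \eqref{controldistol} first, then transfer to \eqref{eq.mpdist} through Propositions \ref{prop.mpnec} and \ref{prop.mpsuff}, finishing with forward-then-backward uniqueness---avoids the external FBSDE citation and the double approximation; its uniqueness half is essentially the paper's convexity computation repackaged variationally. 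The transfer steps are sound: Propositions \ref{prop.mpnec} and \ref{prop.mpsuff} are established before and independently of Proposition \ref{prop.mpfbsde}, so there is no circularity; once the two controls agree a.e., the states agree (the state is the time-integral of the control plus noise), and the two backward equations then share terminal data and a driver that is Lipschitz in $y$ uniformly in $n$ (the factor $n$ in $nY^i$ cancels the $1/n$ in front of $D_xL^i$), so standard BSDE uniqueness applies.

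The one step that does not survive as written is your handling of coercivity. The Taylor bound $L^i(x,a)\ge (C_L/4)|a|^2-C(1+|x|^2)$ indeed leaves the useless coefficient $C_L/4-CT^2$, but neither proposed repair is convincing: adding $\epsilon|x|^2$ with small $\epsilon$ cannot absorb a $-C|x|^2$ term, and passing $\epsilon\to0$ would itself require an $\epsilon$-uniform bound on the minimizers, i.e.\ the very coercivity in question; a time-splitting bootstrap is a contraction device for FBSDEs, not an obvious mechanism for producing minimizers of \eqref{controldistol}. Fortunately the obstacle is illusory, and the fix is already in your own proof: you showed that $\jol-\frac{C_L}{2n}\|\cdot\|_{L^2}^2$ is convex on the closed linear subspace of admissible controls, and $\jol$ is bounded below (since $L^i$, $F$, $G$ are) and strongly continuous (quadratic growth of the data). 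A proper, lower semicontinuous, strongly convex functional on a Hilbert space is automatically coercive, since it dominates a continuous affine functional plus $\frac{C_L}{2n}\|\cdot\|_{L^2}^2$; even more directly, strong convexity plus boundedness below forces every minimizing sequence to be Cauchy in $L^2$ (apply the parallelogram-type inequality at midpoints), so it converges strongly within the admissible subspace and continuity of $\jol$ identifies the limit as the unique minimizer. With that replacement your argument closes, with no smallness-in-$T$ restriction, no penalization, and no coercivity of $L^i$ in $a$ uniform in $x$ needed.
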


Let us mention that in fact (abusing notation slightly) the unique open loop optimizer for \eqref{controldistol} necessarily has the form $\alpha^i_t = \alpha^i(t,X_t^i)$ for some $\bm\alpha = (\alpha^1,...,\alpha^n) \in \ad$, which is thus the unique optimizer to the corresponding closed loop problem. Indeed, the open and closed loop formulations of the control problem \eqref{videf} are well-known to be equivalent, so the same argument leading to Lemma \ref{lem.vichar} shows that we must have $\alpha^i_t = - D_pH^i(X_t^i, n D v^i(t,\bX_t))$, where $v^i$ solves \eqref{vieqn}. We summarize this discussion in the following Proposition:

\begin{proposition}
\label{existunique}
Suppose Assumption \ref{assump.conv} holds. Then for any $(t,\bm m) \in [0,T] \times \spt^n$, there exists $\bm\alpha = (\alpha^1,...,\alpha^n) \in \ad$ which is optimal in the definition of $\vdt(t,\bm m)$, i.e., such that 
\begin{align*}
\vdt(t,\bm m) = \mathcal{J}(t,\bm m, \bm\alpha). 
\end{align*}
It is unique in the sense that if $\bm\beta$ is any other optimizer then, for each $i$, $\alpha^i = \beta^i$ a.e.\ on $[t,T] \times \R^d$, for each $i=1,\ldots,n$.
\end{proposition}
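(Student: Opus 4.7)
The plan is to combine the open-loop theory of Propositions \ref{prop.mpnec}--\ref{prop.mpfbsde} with the reduction flagged in the paragraph preceding the statement: once the marginal flow $\bm m_s = (\sL(X^1_s),\ldots,\sL(X^n_s))$ is frozen along the optimal open-loop trajectory, each $X^i$ is automatically optimal for the standard scalar control problem \eqref{videf}, whose unique open-loop optimizer takes feedback form via the classical HJB equation \eqref{vieqn}. This is the step that upgrades the open-loop optimizer of \eqref{eq.mpdist} to a closed-loop element of $\ad$.

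\emph{Existence.} Fix $(t,\bm m)$ and pick independent $\sF_0$-measurable $\xi^i \sim m^i$. Proposition \ref{prop.mpfbsde} (applied on $[t,T]$) produces a unique solution $(\bX,\bY,\bZ)$ of \eqref{eq.mpdist}, and Proposition \ref{prop.mpsuff} shows that $\alpha^{\mathrm{OL},i}_s := -D_pH^i(X^i_s,nY^i_s)$ is an open-loop optimizer for \eqref{controldistol}. Freezing the marginals $\bm m_s$, the scalar HJB equation \eqref{vieqn} admits a unique classical solution $v^i$ with the required growth under Assumption \ref{assump.conv} (by the same argument sketched for $V$ after \eqref{eq.hjbclassic}); the standard verification theorem for \eqref{videf} then exhibits the feedback $\alpha^i(s,x) := -D_pH^i(x,nDv^i(s,x))$ as an open-loop optimizer as well. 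Uniqueness of the latter (Proposition \ref{prop.mpfbsde}) forces $\alpha^{\mathrm{OL},i}_s = \alpha^i(s,X^i_s)$ a.s.\ for a.e.\ $s$, so $\bm\alpha := (\alpha^1,\ldots,\alpha^n) \in \ad$ realizes the open-loop infimum. Because every closed-loop control in $\ad$ induces a feasible open-loop control for \eqref{controldistol}, the open-loop infimum lower bounds $\vdt(t,\bm m)$, and the explicit $\bm\alpha$ attains this lower bound, giving $\vdt(t,\bm m) = \mathcal{J}(t,\bm m,\bm\alpha)$.

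\emph{Uniqueness.} Given another optimizer $\bm\beta \in \ad$, the process $(\beta^i(s,X^i_s))_{s \in [t,T]}$ is itself an open-loop optimizer, so by Proposition \ref{prop.mpfbsde} it must coincide, a.s.\ for a.e.\ $s$, with $\alpha^i(s,X^i_s)$. The one point that needs care, and which I expect to be the main (though still mild) obstacle, is promoting this ``almost sure along the trajectory'' identity to equality a.e.\ on $[t,T] \times \R^d$. For $s > t$, Girsanov's transformation (applicable because the optimal drift of $X^i$ is square integrable by construction) combined with the Gaussian smoothing of the Brownian increment on $[t,s]$ shows that $\sL(X^i_s)$ is equivalent to Lebesgue measure on $\R^d$; Fubini on $[t,T] \times \Omega$ then transfers the coincidence of $\alpha^i$ and $\beta^i$ along the trajectory into an a.e.\ equality on $[t,T] \times \R^d$, completing the argument.
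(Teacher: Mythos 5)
Your overall route is the same as the paper's: pass to the open-loop problem \eqref{controldistol}, use Propositions \ref{prop.mpnec}--\ref{prop.mpfbsde} for existence and uniqueness of the open-loop optimizer, and then upgrade it to feedback (hence closed-loop) form through the frozen-marginal scalar problem \eqref{videf} and its HJB equation \eqref{vieqn}, exactly as in the discussion around Lemma \ref{lem.vichar} and Section \ref{sec:dstr-opt-necessary}. Most of the proposal is fine, and your Girsanov/Fubini step making the ``a.e.\ on $[t,T]\times\R^d$'' uniqueness explicit is a detail the paper leaves implicit (though note that square-integrability of the drift alone does not justify Girsanov; what saves you is that the optimal feedback drift is Lipschitz with linear growth, by the bound on $D^2v^i$ in Lemma \ref{lem.vichar}, so the change of measure and the strict positivity of the density of $\sL(X^i_s)$ for $s>t$ are indeed available, as is the admissibility of the feedback in $\ad$).

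The one step that does not work as written is the identification of the FBSDE open-loop control with the feedback. You exhibit $\alpha^i(s,x)=-D_pH^i(x,nDv^i(s,x))$ as an optimizer of the \emph{frozen-marginal} problem \eqref{videf} and then invoke ``uniqueness of the latter (Proposition \ref{prop.mpfbsde})''. But Proposition \ref{prop.mpfbsde} gives uniqueness for the McKean--Vlasov problem \eqref{controldistol}; to apply it you would first need to know that the feedback-implemented control is optimal for \eqref{controldistol}, which requires knowing that its state marginals coincide with the frozen flow $\bm m_s$ --- precisely what you are trying to establish, so the argument is circular at that point. The repair is easy and is in effect what the paper does: either invoke uniqueness of the optimizer of the scalar problem \eqref{videf} itself (immediate from the strict convexity \eqref{lsrict} of $L^i$ in $a$ together with convexity of the frozen costs, since the state is an affine function of the control), or run the verification/It\^o argument for $v^i$ directly along the optimal trajectory $X^i$, which forces $\alpha^{\mathrm{OL},i}_s$ to attain the pointwise minimum defining the Hamiltonian and hence to equal the feedback along the trajectory, with no uniqueness needed at all. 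With that substitution your existence argument, and then your uniqueness argument, go through.
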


\begin{remark} \label{re:brownianbridge}
In the Cole-Hopf case \eqref{intro:ColeHopf}, say with $d=1$ and $\bX_0=0$ and $T=1$ for simplicity, a somewhat more concrete description of the optimal distributed control was discussed in \cite[Remark 2.15]{LacMukYeu}. For the probability measure $P$ on $\R^n$ with density proportional to $e^{-nG(\bx)}$, it is shown that there exists a unique minimizer $Q^*$ of $H(\cdot\,|\,P)$ over the set of product measures. Then, the optimal state process $\bX$ for the distributed control problem is characterized by $\bX_0=0$, $\bX_1 \sim Q^*$, and the conditional law of $(\bX_t)_{t \in [0,1]}$ given $\bX_1=x$ being the law of the Brownian bridge from $0$ to $x$. It is not clear if this description could be recovered from the PDE \eqref{vieqn} or the FBSDE \eqref{eq.mpdist}.
\end{remark}

\section{Near-optimality of distributed controls}
\label{sec.approx}

This section states and proves our most general bounds on $|\cv(t,\bm m) - \vdt(t,\bm m)|$.
These bounds will involve explicit constants depending on the regularity of the data $L^i$, $H^i$, $F$, and $G$, as well as certain concentration properties of the initial distribution $\bm m$. 
We thus begin in Section \ref{se:constants}  by introducing some terminology and notational conventions for these constants, and Section  \ref{se:mainestimates} will then state the main results in detail.

\subsection{Functional inequalities and explicit constants} \label{se:constants}

We will make frequent use of two functional inequalities satisfied by the laws of various controlled state processes, the  Poincar\'e  and transport inequalities.

For a probability measure $m$ on $\R^k$, we say that $m$ satisfies a \emph{Poincar\'e inequality} with constant $C$ if
\begin{equation} \label{poindef}
\Var_m(g) := \langle m,g^2\rangle -  \langle m, g\rangle^2 \leq C \langle m,|Dg|^2\rangle,
\end{equation}
for all bounded Lipschitz functions $g : \R^k \to \R$.
We call the smallest constant $C$ such that \eqref{poindef} holds \emph{the Poincar\'e constant} of $m$; if there is no such constant, then the Poincar\'e constant is $\infty$. We recall that convention that $\bm m = (m^1,...,m^n)$ is identified with the product measure $m^1 \otimes \cdots \otimes m^n$. Thus, when we say the Poincar\'e constant of $\bm m$, we mean the Poincar\'e constant of $m^1 \otimes \cdots \otimes m^n$, which, because Poincar\'e inequalities tensorize, is the same as the maximum of the Poincar\'e constant of the marginals $m^i$. 

 A probability measure $\mu$ on a separable metric space $(E,d)$ is said to satisfy the $T_2$ inequality with constant $c$ if
\begin{equation}
\wass_2^2(\mu,\nu) \le cH(\nu\,|\,\mu), \quad \forall \nu \in \P_2(E). \label{def:T2inequality}
\end{equation}
Here $\wass_2^2$ is the quadratic Wasserstein distance and $H$ the relative entropy, defined as usual by
\begin{align*}
\wass_2^2(\mu,\nu) &:= \inf\{ \E [d^2(X,Y)] : X \sim \mu, \ Y \sim \nu\}, \\
H(\nu\,|\,\mu) &:= \int_E \frac{d\nu}{d\mu}\log\frac{d\nu}{d\mu}\,d\mu, \ \text{if } \nu \ll \mu, \ \ \ H(\nu\,|\,\mu)=\infty \ \text{otherwise}.
\end{align*}
The $T_2$ inequality  is satisfied (with finite constant) for $\mu$ being a Dirac, a Gaussian, or any strongly log-concave measure, to name but a few examples; see \cite{gozlan2010transport} for additional information about these well-studied inequalities.

Lastly, we introduce some shorthand notation for explicit but complicated constants which appear in our main results.
Recall from Section \ref{se:notation} that $|\cdot|$ and $|\cdot|_{op}$ denote the Frobenius and operator norms, respectively.

\begin{convention}
We use $L$ or $H$ without a superscript to describe bounds which apply to $L^i$ or $H^i$ uniformly with respect to $i$. More precisely, we will use the quantities
\begin{align*}
\|D_{pp} H\|_{\infty} = \max_{i = 1,...,n} \| |D_{pp} H^i|_{\ope} \|_{\linfty(\R^d \times \R^d)}, \\
\|D_{xp} H\|_{\infty} = \max_{i = 1,...,n} \| |D_{xp} H^i|_{\ope} \|_{\linfty(\R^d \times \R^d)},  \\
\|D_{xx} L\|_{\infty} = \max_{i = 1,...,n} \| |D_{xx} L^i|_{\ope} \|_{\linfty(\R^d \times \R^d)}. 
\end{align*}
We will denote by $C_F$ and $C_G$ two constants such that the spectral bounds 
\begin{align}
0 \leq D^2 F(\bx) \leq \frac{C_F}{n} I_{nd \times nd}, \quad
0 \leq D^2 G(\bx) \leq \frac{C_G}{n} I_{nd \times nd} \label{asmp:specFG}
\end{align}
hold for all $\bx \in (\R^d)^n$. When we write $\|D_{ij} F\|_{\linf}$ or $\|D_{ij} G\|_{\linf}$, we mean the $\linf$ norm on $(\R^d)^n$ of the Frobenius norm on $\R^{d \times d}$, e.g.
\begin{align*}
\| D_{ij} G \|_{\linf}^2 = \||D_{ij} G|\|_{\linfty((\R^d)^n)}^2 = \sup_{\bx \in (\R^d)^n}  \sum_{k,l = 1}^d |D_{x^i_k x^j_l} G(\bx)|^2  .
\end{align*}
We will denote by $C_S$ the constant 
\begin{align}
C_S = C_G + T(\|D_{xx} L\|_{\infty} + C_F). \label{def:CS}
\end{align}
We denote by $C_P$ the constant 
\begin{align*} 
C_P = \frac{\exp\bigg(2T \big(\|D_{xp} H\|_{\infty} + \|D_{pp} H\|_{\infty}C_S \big) \bigg)  - 1}{2\big(\|D_{xp} H\|_{\infty} + \|D_{pp} H\|_{\infty}C_S\big)}. 
\end{align*}
Finally, for $\bm m \in \spt^n$ having Poincar\'e constant $c_P$ and satisfying a $T_2$ inequality with constant $c_{T_2}$, we denote by $C_P(\bm m)$ and $C_{T_2}(\bm m)$ the constants
\begin{align} 
C_P(\bm m) &:= C_P + c_P \exp\bigg(2T \big(\|D_{xp} H\|_{\infty} + \|D_{pp} H\|_{\infty}C_S \big) \bigg), \label{def:CP(m)} \\
C_{T_2}(\bm m)  &:= 3(c_{T_2} \wedge 2T) \exp\Big(3T(\|D_{xp} H\|_{\infty} + \|D_{pp} H\|_{\infty} C_S)^2\Big). \label{def:CWconst}
\end{align}
\end{convention}

\begin{remark}
Note that Dirac measures have Poincar\'e constant zero, so that $C_P(\delta_{x^1},\ldots,\delta_{x^n})=C_P$ for any $\bx \in (\R^d)^n$.
\end{remark}

\begin{remark}
We include the factor of $1/n$ in \eqref{asmp:specFG} so that the constants $C_F$ and $C_G$ are dimension-free in our main examples. The constant $C_S$, as we will see in Lemma \ref{lem.spectral}, gives an upper bound on the Hessian of $V$, in the sense that $D^2 V(t,\bx) \leq \frac{C_S}{n}$ for each $t$. The meaning of $C_P$ will become clear in Lemma \ref{lem.cp}, which shows that $C_P$ provides an upper bound on the Poincar\'e constant of certain diffusions with which we will be working.
\end{remark}

\subsection{Statements of main estimates on value functions and optimal state processes}
\label{se:mainestimates}

Our main estimates will be stated in terms of the following distributed state process.
For $(s,x,\bm m) \in [0,T] \times \R^d \times \spt^n$, define
\begin{align} \label{def:hatalpha}
\widehat{\alpha}^i(s,x,\bm m) = - D_p H^i\big(x, n\langle \bm m^{-i}, D_i V(s,\cdot) \rangle(x) \big).
\end{align}
where we recall that notation $\langle \bm m^{-i}, D_i V(s,\cdot) \rangle(x)$ indicates integrating over the variables $j \neq i$ with $x$ plugged into the $i^\text{th}$ variable; that is, $\langle \bm m^{-i}, D_i V(s,\cdot) \rangle(x) = \E[D_iV(s,\bm\xi)\,|\,\xi^i=x]$, for $\bm\xi=(\xi^1,\ldots,\xi^n) \sim \bm m$.
Given $(t,\bm m) \in [0,T] \times \spt^n$,  consider the McKean-Vlasov SDE
\begin{align} \label{xhatdef}
\begin{split}
d\widehat{X}_s^i &= \widehat{\alpha}^i(s,\widehat{X}^i_s, \bm m_s) \, ds + dW_s^i, \ \ s \in (t,T), \ \ i=1,\ldots,n, \\
\bm m_s &= (\sL(\widehat{X}_s^1),...,\sL(\widehat{X}_s^n)), \quad \bm m_t = \bm m.
\end{split}
\end{align}
We may write \eqref{xhatdef} more concisely as
\begin{align*}
d\widehat{X}_s^i = - D_p H^i\big(\widehat{X}_s^i, n \E[D_iV(s,\widehat{\bX}_s) \,|\, \widehat{X}^i_s] \big) \, ds + dW_s^i, \ \ s \in (t,T), \ \ i=1,\ldots,n, \ \ \widehat{\bX}_t \sim \bm m. 
\end{align*}

\begin{lemma} \label{le:Yeq-wellposed}
For $(t,\bm m) \in [0,T] \times \spt^n$, there exists a unique strong solution of the SDE \eqref{xhatdef}.
\end{lemma}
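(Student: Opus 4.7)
The plan is to use a standard Picard-type fixed-point argument on the space of continuous product-measure-valued flows, exploiting that the drift for the $i^\text{th}$ component depends only on $(s,\widehat X^i_s)$ once the marginal flow is fixed, so the equation truly decouples across $i=1,\dots,n$.

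First, I would anticipate the spectral bound $0 \le D^2 V(s,\bx) \le (C_S/n) I_{nd\times nd}$ (which will be proved in Lemma~\ref{lem.spectral}). Together with Assumption~\ref{assump.conv}, this gives two Lipschitz estimates on the map
\[
(s,x,\bm\mu) \mapsto \widehat\alpha^{i}_{\bm\mu}(s,x) := -D_p H^{i}\bigl(x,\, n\langle \bm\mu^{-i}, D_i V(s,\cdot)\rangle(x)\bigr).
\]
Namely, differentiating in $x$ and using $|D_{ii}V|_{\ope}\le C_S/n$ yields $|D_x \widehat\alpha^{i}_{\bm\mu}|_{\ope} \le \|D_{xp}H\|_\infty + \|D_{pp}H\|_\infty C_S$, independent of $\bm\mu$; and using $|D_{ij}V| \le C_S/n$ together with the Kantorovich--Rubinstein duality yields
\[
|\widehat\alpha^{i}_{\bm\mu}(s,x) - \widehat\alpha^{i}_{\bm\nu}(s,x)|
\le \|D_{pp}H\|_\infty C_S \sum_{j \neq i} \wass_2(\mu^{j},\nu^{j}).
\]

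Next, fix a continuous flow $\bm\mu : [t,T] \to \spt^n$ and consider, for each $i$, the autonomous SDE
\[
dY^{i}_s = \widehat\alpha^{i}_{\bm\mu_s}(s, Y^{i}_s)\,ds + dW^{i}_s, \qquad Y^{i}_t = \xi^{i},
\]
where $\bm\xi=(\xi^1,\ldots,\xi^n)\sim\bm m$ are independent and $\F_0$-measurable. By the uniform $x$-Lipschitz bound above, each of these is a standard SDE with Lipschitz drift, so it admits a unique strong solution. Crucially, the dynamics are decoupled across $i$, so $(Y^{1},\ldots,Y^{n})$ are independent and $\bm m_s := (\sL(Y^{1}_s),\ldots,\sL(Y^{n}_s))$ is a product measure at every $s$. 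This defines a map $\Phi: C([t,T];\spt^n) \to C([t,T];\spt^n)$, $\Phi(\bm\mu) = \bm m$, whose fixed points coincide with strong solutions of \eqref{xhatdef}.

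Finally, I would prove that $\Phi$ is a contraction on short time intervals. Using synchronous coupling (same $W^i$, same $\xi^i$) for two flows $\bm\mu,\bm\nu$, Gronwall's lemma applied to $\E|Y^{i}_s - Z^{i}_s|^2$ and the two Lipschitz estimates above give
\[
\sum_{i=1}^n \wass_2^{2}\bigl(\Phi(\bm\mu)^{i}_s,\Phi(\bm\nu)^{i}_s\bigr)
\le K_n \int_t^s \sum_{i=1}^n \wass_2^{2}(\mu^{i}_r,\nu^{i}_r)\,dr
\]
for some constant $K_n$ depending on $n$ and the data, but finite. Choosing a subinterval $[t,t+\delta]$ with $K_n\delta < 1$, $\Phi$ is a contraction on $C([t,t+\delta];\spt^n)$ in the natural sup-of-$\wass_2^2$ metric, so it has a unique fixed point there; concatenating finitely many such intervals yields existence and uniqueness on all of $[t,T]$. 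The main (minor) obstacle is simply keeping track of the Wasserstein-in-measure Lipschitz estimate on the measure-dependent drift; the $n$-dependence of $K_n$ is harmless for well-posedness, since sharp $n$-dependence is not needed here but only later in the main theorems.
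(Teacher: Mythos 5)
Your proposal is correct and takes essentially the same route as the paper: both rest on the spectral bound of Lemma \ref{lem.spectral} to get Lipschitz continuity of the drift in $(x,\bm m)$ (with the Wasserstein metric on the measure argument) and then invoke well-posedness of Lipschitz McKean--Vlasov SDEs, using the decoupled structure to preserve independence of the components. The only difference is cosmetic: the paper simply cites the standard Lipschitz McKean--Vlasov theorem (applied with the joint law on $(\R^d)^n$) and deduces the product structure afterwards, whereas you unpack that citation into the usual Picard/contraction argument, run directly on flows of product measures.
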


The proof is deferred to the next section.
We now state our main estimate between the (lifted) full-information and distributed value functions $\cv$ and $\vdt$, which were defined in \eqref{def:Vlift} and \eqref{controldist}.

\begin{theorem} \label{thm.est1}
Suppose Assumption \ref{assump.conv} holds.
Let $(t,\bm m) \in [0,T] \times \spt^n$, and let $\widehat{\bX}$ be the corresponding solution of \eqref{xhatdef}. We have 
\begin{align*}
0 \leq \vdt(t,\bm m) - \cv(t,\bm m) \leq \sR(t,\bm m), 
\end{align*}
where we define
\begin{align*}
\sR(t,\bm m) &= n C_t(\bm m)\sum_{1 \le i < j \le n} \bigg((T-t)\E|D_{ij} G(\widehat{\bX}_T)|^2 +    \int_t^T (s-t)\E|D_{ij} F(\widehat{\bX}_s)|^2\,ds\bigg), \\
C_t(\bm m) &= \|D_{pp} H\|_{\infty} C_P(\bm m) \exp\Big( (T-t)(1 + 2  C_S \|D_{pp} H\|_{\infty} + 2\|D_{xp} H\|_{\infty})\Big).
\end{align*}
\end{theorem}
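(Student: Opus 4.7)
The plan is to work on $\spt^n$ throughout, exploiting the verification framework of Section \ref{sec.dpp} applied to the lift $\cv(t,\bm m) = \langle \bm m, V(t,\cdot)\rangle$. First I would integrate the HJB equation \eqref{eq.hjbclassic} for $V$ against the product $\bm m$ and rewrite it using the coordinate-wise Lions-derivative identities
\begin{align*}
D_{m^i}\cv(t,\bm m,y) = \langle \bm m^{-i}, D_iV(t,\cdot)\rangle(y), \quad D_y D_{m^i}\cv(t,\bm m,y) = \langle \bm m^{-i}, D_{ii}V(t,\cdot)\rangle(y).
\end{align*}
This shows that $\cv$ solves the distributed PDE \eqref{eq.hjbdist} with an additional source term $-E(t,\bm m)$ on the right-hand side, where, for $\bm\xi \sim \bm m$,
\begin{align*}
E(t,\bm m) = \frac{1}{n}\sum_{i=1}^n \E\big[H^i(\xi^i, nD_iV(t,\bm\xi)) - H^i\big(\xi^i, n\E[D_iV(t,\bm\xi)\,|\,\xi^i]\big)\big] \ge 0
\end{align*}
by convexity of $H^i(x,\cdot)$ and Jensen's inequality. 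A Taylor bound using $\|D_{pp}H\|_{\infty}$ then yields the quadratic control $E(t,\bm m) \le \tfrac{n}{2}\|D_{pp}H\|_{\infty}\,\Phi(t,\bm m)$, where $\Phi(t,\bm m) := \sum_{i=1}^n \E\,\Var(D_iV(t,\bm\xi)\,|\,\xi^i)$. Applying the verification argument of Proposition \ref{prop.verification} to $\cv$ with source $\langle \bm m,F\rangle + E(t,\bm m)$ identifies $\widehat{\alpha}$ from \eqref{def:hatalpha} as the unique optimizer for the associated problem, and a routine one-sided comparison against any $\bm\alpha \in \ad$ with trajectory $\widehat{\bX}$ as in \eqref{xhatdef} gives $0 \le \vdt(t,\bm m) - \cv(t,\bm m) \le \int_t^T E(s,\bm m_s)\,ds$.

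Next, I would analyze the evolution of $\Phi(s,\bm m_s)$ along the McKean–Vlasov flow $\widehat{\bX}_s$. Applying It\^o's formula to $(D_iV(s,\widehat{\bX}_s))_{i=1}^n$, using the HJB equation \eqref{eq.hjbclassic} to eliminate $\partial_sV$, and taking conditional variance given $\widehat{X}^i_s$ (which removes the drifts that are measurable with respect to that $\sigma$-field by construction of $\widehat{\alpha}$) produces an expression for $\tfrac{d}{ds}\Phi$ in which the It\^o quadratic-variation contribution combined with the $\Delta V$ term yields a manifestly nonnegative quantity $\E\sum_{i,j}|D_{ij}V|^2 - \sum_i \E|\E[D_{ii}V|\widehat{X}^i_s]|^2$, while the drift contribution produces a quadratic form in the centered gradients weighted by $D^2V$. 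Convexity of $V$, which follows from the convexity of $F$, $G$, $L^i$ via the standard control-theoretic argument, makes that quadratic form nonnegative. The only remaining terms to bound come from the $x$- and $p$-derivatives of $H^i$ hitting $D_iV$, and these are controlled by a multiple of $\Phi$ itself involving $\|D_{xp}H\|_{\infty} + \|D_{pp}H\|_{\infty}\cdot C_S$, where the spectral estimate $0 \le D^2 V \le (C_S/n) I$ from Lemma \ref{lem.spectral} is crucial for keeping the constant independent of $n$. Gronwall then gives $\Phi(s,\bm m_s) \le e^{c(T-s)}\Phi(T,\bm m_T)$ on $[t,T]$ for the appropriate constant $c$, reproducing the exponential factor in $C_t(\bm m)$.

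Finally, I would bound the terminal variance $\Phi(T,\bm m_T) = \sum_{i=1}^n \E\,\Var(D_iG(\widehat{\bX}_T)\,|\,\widehat{X}^i_T)$ using Lemma \ref{lem.cp}, which supplies the Poincar\'e constant $C_P(\bm m)$ for each marginal $\sL(\widehat{X}^j_T)$; tensorization of the Poincar\'e inequality over the independent components $j\ne i$ gives
\begin{align*}
\Var(D_iG(\widehat{\bX}_T)\,|\,\widehat{X}^i_T) \le C_P(\bm m)\sum_{j\ne i}\E\big[|D_{ij}G(\widehat{\bX}_T)|^2\,\big|\,\widehat{X}^i_T\big],
\end{align*}
and summing in $i$ produces the factor $\sum_{i<j}\E|D_{ij}G|^2$ with the correct multiplicities. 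The $F$ contribution is obtained by the same Gronwall-plus-Poincar\'e scheme applied pointwise in $s \in [t,T]$ to $D_iF(\widehat{\bX}_s)$, which explains the extra $(s-t)$ factor in the integrand defining $\sR(t,\bm m)$. The main obstacle will be the computation in the second paragraph: organizing $\tfrac{d}{ds}\Phi(s,\bm m_s)$ so that the convexity of $V$ is manifestly the source of nonnegativity and showing that the remaining error terms are absorbed by Gronwall with an $n$-independent constant. Without the spectral bound $D^2V \le (C_S/n)I$, these cross-terms would a priori grow like $n$, and it is precisely this bound (rather than any bound on $|D_{ij}V|$ alone) that is decisive.
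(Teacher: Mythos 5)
Your proposal follows essentially the same route as the paper's proof: lift $V$ to $\cv(t,\bm m)=\langle\bm m,V(t,\cdot)\rangle$, show it solves the distributed HJB with the extra source $-E(t,\bm m)$, use the verification theorem to get $0\le \vdt(t,\bm m)-\cv(t,\bm m)\le \int_t^T E(s,\bm m_s)\,ds$ along the flow of $\widehat{\bX}$, bound $E\le \|D_{pp}H\|_\infty E_Q$ with $E_Q=\tfrac n2\Phi$, differentiate $E_Q$ along the flow, apply Gronwall, and conclude with the tensorized Poincar\'e inequality at time $T$ and along the flow for the $F$ term (Fubini producing the $(s-t)$ weight). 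The overall architecture, the role of the spectral bound $0\le D^2V\le (C_S/n)I$, and the final constant all match.

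Two steps need repair, though neither changes the outcome. First, the claim that ``convexity of $V$ makes the drift quadratic form nonnegative'' is only exact for quadratic Hamiltonians: in general $\widehat\alpha^j(s,\widehat X^j_s)-\alpha^j(s,\widehat{\bX}_s)=n\bar M_j\big(D_jV-\E[D_jV\mid \widehat X^j_s]\big)$ with $\bar M_j$ an averaged $D_{pp}H^j$, so the drift contribution is $n^2\E\big[\delta^\top D^2V\,\mathrm{diag}(\bar M_1,\ldots,\bar M_n)\,\delta\big]$, and the product of two positive semidefinite matrices need not yield a nonnegative quadratic form. The paper does not discard this term; it bounds its absolute value by $2C_S\|D_{pp}H\|_\infty E_Q$, using $|\widehat\alpha^j-\alpha^j|\le n\|D_{pp}H\|_\infty|\delta_j|$ together with the spectral bound on $D^2V$ --- which is precisely the $\|D_{pp}H\|_\infty C_S$ contribution you already placed in the Gronwall constant, so you should replace the nonnegativity claim by this absolute-value bound (this also removes the apparent double counting in your ``remaining terms''; only the term $A_4=\tfrac n2\E[\sum_{i,j}|D_{ij}V|^2-\sum_i|\E[D_{ii}V\mid\widehat X^i_s]|^2]$ is discarded as nonnegative). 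Second, applying It\^o's formula to $D_iV(s,\widehat{\bX}_s)$ requires $DV\in C^{1,2}$, which Assumption \ref{assump.conv} alone does not guarantee (it gives only $G\in C^{2,\alpha}_{\mathrm{loc}}$); the paper first argues under the additional hypothesis $G\in C^{3,\alpha}_{\mathrm{loc}}$ and then removes it by mollifying $G$ and passing to the limit in the estimate, a step you should include. (Minor: the verification is applied with source $\langle\bm m,F\rangle-E$, not $+E$, consistent with your earlier sentence.)
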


\begin{remark}
Of course, we have $\E|D_{ij}G(\widehat{\bX}_T)|^2 \le \|D_{ij}G\|_\infty^2$, and similarly for $F$, which is in fact good enough for the applications we have in mind. 
But we prefer to state the sharpest version of the result, because in certain situations we do not appear to need boundedness of the second derivatives as was imposed in Assumption \ref{assump.conv}; see Remark \ref{re:unboundedness} for discussion of this point.
\end{remark}

In the case of quadratic costs $L^i$, we can obtain sharper constants and a particularly simple bound. We give the We state this as a corollary of the proof Theorem \ref{thm.est1}, with details given after the proof of Theorem \ref{thm.est1} below. See also Remark \ref{re:unboundedness} below for some discussion of relaxing the assumption of bounded second derivatives of $(F,G)$.

\begin{corollary} \label{co:quadratic}

Suppose Assumption \ref{assump.conv} holds, and $L^i(x,a)=|a|^2/2$ for each $i=1,\ldots,n$. Let $(t,\bm m) \in [0,T] \times \spt^n$, with $\bm m$ having Poincar\'e constant $c_0$. Let $\widehat{\bX}$ be the corresponding solution of \eqref{xhatdef}. Then
\begin{align*}
0 \leq \vdt(t,\bm m) - \cv(t,\bm m) \le n(T-t) \bigg[ &\bigg( (T-t+c_0)\sum_{1 \le i < j \le n} \E|D_{ij}G(\widehat{\bX}_T)|^2\bigg)^{1/2} \\
	&+ \int_t^T\bigg( (s-t+c_0)\sum_{1 \le i < j \le n} \E|D_{ij}F(\widehat{\bX}_s)|^2\bigg)^{1/2}\,ds\bigg]^2.
\end{align*}

\end{corollary}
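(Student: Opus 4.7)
My plan is to specialize the argument underlying Theorem \ref{thm.est1} to exploit features specific to quadratic Lagrangians. Recall that $E(s,\bm m) := \tfrac{n}{2}\sum_{i=1}^n\E\,\Var(D_iV(s,\bm\xi)|\xi^i)$ for $\bm\xi\sim\bm m$. The first reduction, which is exactly the intermediate step that would be established in the proof of Theorem \ref{thm.est1} (namely Lemma \ref{lem.suffest}), is the comparison-type inequality
\[
0 \le \vdt(t,\bm m) - \cv(t,\bm m) \le \int_t^T E(s,\bm m_s)\,ds,
\]
where $\bm m_s$ is the marginal-law flow of the McKean--Vlasov system \eqref{xhatdef}. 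Thus it suffices to control $E(s,\bm m_s)$ on $[t,T]$.

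In the quadratic case one has $\|D_{xp}H\|_\infty = 0$, $\|D_{xx}L\|_\infty = 0$, $\|D_{pp}H\|_\infty = 1$, and the drift of each $\widehat X^i$ is $x \mapsto -n\langle\bm m_s^{-i}, D_iV(s,\cdot)\rangle(x)$, which is monotone-decreasing in $x$ by convexity of $V$ (inherited from $F,G,L^i$). The Gr\"onwall-type step in the general proof that produces the constant $C_P(\bm m)$ collapses in this setting to the elementary fact that under monotone drift with additive unit-variance noise, the Poincar\'e constant of $\widehat X^i_s$ is bounded by $c_0 + (s - t)$, provable via a synchronous coupling or a Clark--Ocone argument on the Malliavin derivative. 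Tensorization of Poincar\'e across the product measure $\bm m_s^{-i}$ then gives
\[
E(s,\bm m_s) \le n(c_0+s-t)\sum_{1\le i<j\le n}\E|D_{ij}V(s,\widehat{\bX}_s)|^2.
\]

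For the off-diagonal Hessian of $V$, I would use the maximum principle \eqref{eq.mpstand} for the full-information problem: in the quadratic case the $Y$-equation simplifies to $dY^i = -D_iF(\bX_t)\,dt + \bZ^i\,d\bW$ with $Y^i_T = D_iG(\bX_T)$, yielding the representation $D_iV(s,\bx) = \E[D_iG(\bX^{s,\bx}_T) + \int_s^T D_iF(\bX^{s,\bx}_r)\,dr]$ along the full-information optimal trajectory. Differentiating in $\bx$ produces a linear matrix FBSDE whose solution at time $s$ equals $D^2V(s,\bx)$; since $D^2V\succeq 0$, the associated tangent flow $P$ (solving $dP/dr = -nD^2V\,P$, $P(s) = I$) is contractive in operator norm. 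Decomposing the linear FBSDE into its block-diagonal and block-off-diagonal parts --- noting that the diagonal part decouples cleanly and the off-diagonal part is driven only by off-diagonal $D^2F$ and $D^2G$ --- I expect to obtain the Minkowski-type estimate
\[
\Big(\sum_{i<j}\E|D_{ij}V(s,\widehat{\bX}_s)|^2\Big)^{1/2} \le \Big(\sum_{i<j}\E|D_{ij}G(\widehat{\bX}_T)|^2\Big)^{1/2} + \int_s^T \Big(\sum_{i<j}\E|D_{ij}F(\widehat{\bX}_r)|^2\Big)^{1/2}\,dr.
\]

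Combining the two bounds and distributing the factor $(c_0 + s - t)^{1/2}$ into the bracket --- using $s \le T$ to bound the $G$-piece and $s \le r$ to bound the $F$-integrand --- yields a uniform bound on $E(s,\bm m_s)$ over $s \in [t,T]$ of the form $n[A+B]^2$ with $A,B$ as in the statement, and integration in $s$ produces the claimed $n(T-t)[A+B]^2$. The main obstacle is making the third step rigorous: one must verify that the off-diagonal block structure of $D^2V$ really is controlled purely by the off-diagonal blocks of $D^2F$ and $D^2G$, despite the tangent flow $P$ potentially mixing indices. This amounts to a careful perturbative analysis of the linear matrix FBSDE, exploiting that the tangent flow reduces to the block-diagonal identity flow in the ``separable'' regime where all off-diagonal Hessians of $F,G$ vanish.
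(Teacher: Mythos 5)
Your first two steps are sound and coincide with the paper's: the reduction $0 \le \vdt(t,\bm m)-\cv(t,\bm m)\le \int_t^T E(s,\bm m_s)\,ds$ via Lemma \ref{lem.suffest} (with $E=E_Q$ since $H^i(x,p)=|p|^2/2$), and the Poincar\'e constant $c_0+s-t$ for $\sL(\widehat X^i_s)$ from the nonpositive derivative of the drift together with Lemma \ref{lem.poincare}. The gap is your third step. The claimed Minkowski-type bound
\begin{align*}
\Big(\sum_{i<j}\E|D_{ij}V(s,\widehat{\bX}_s)|^2\Big)^{1/2} \le \Big(\sum_{i<j}\E|D_{ij}G(\widehat{\bX}_T)|^2\Big)^{1/2} + \int_s^T \Big(\sum_{i<j}\E|D_{ij}F(\widehat{\bX}_r)|^2\Big)^{1/2}\,dr
\end{align*}
is not established, and your sketch does not support it. Differentiating the HJB equation twice (in the quadratic case) shows that along the optimal flow $D_{ij}V$ obeys a matrix Riccati-type dynamic whose source contains $n\sum_k D_{ik}V\,D_{kj}V$; this couples the off-diagonal block $(i,j)$ both to the diagonal blocks $D_{ii}V, D_{jj}V$ (each of size $C_S/n$, so contributing at rate of order $C_S$, which at best yields a Gronwall factor $e^{cC_S(T-s)}$ absent from your claimed inequality and from the corollary's constants) and to products of other off-diagonal blocks. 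So it is simply false that ``the off-diagonal part is driven only by off-diagonal $D^2F$ and $D^2G$''; the tangent flow genuinely mixes indices, and the semidefinite bound $0\le D^2V$ does not transfer to blockwise control of $|D_{ij}V|$ by $|D_{ij}G|,|D_{ij}F|$. There is also a secondary mismatch: your representation of $D^2V$ runs the full-information flow from $(s,\widehat{\bX}_s)$, whose time-$r$ law is not that of $\widehat{\bX}_r$, so even the expectations appearing in your inequality are not the ones you would obtain.

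The paper avoids estimating $D_{ij}V$ altogether. It differentiates $E_Q(s,\bm m_s)$ in $s$ and uses the decomposition $\frac{d}{ds}E_Q = A_1+A_2+A_3+A_4$ from the proof of Theorem \ref{thm.est1}: in the quadratic case $A_2=0$ (no $D_xH$ term), $A_1\ge 0$ using only $D^2V\ge 0$, and $A_4\ge 0$ trivially, while $A_3$ is bounded by Cauchy--Schwarz plus the conditional Poincar\'e inequality (tensorized over the independent coordinates $j\ne i$), which involves only $D_{ij}F$. This gives $\frac{d}{ds}\big(E_Q(s,\bm m_s)^{1/2}\big) \ge -\big((s-t+c_0)\sum_{i<j}\E|D_{ij}F(\widehat{\bX}_s)|^2\big)^{1/2}$, which is integrated backward from $T$, with the terminal term $E_Q(T,\bm m_T)$ handled by the same Poincar\'e argument applied to $G$. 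So the only information about $V$ used is its convexity; the cross-derivatives entering the final bound are those of the data $F,G$, not of $V$. If you want to salvage your route, you would have to prove a genuinely new quantitative estimate on the off-diagonal Hessian blocks of $V$, which is neither needed for, nor implied by, the corollary.
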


Our next result shows how to approximate the optimal state process $\bX$ from the full-information problem by a distributed state process.
Recall the optimizer $\bX$ for the unconstrained control problem satisfies the SDE
\begin{align} \label{xdef2}
dX_s^i = - D_p H^i(X_s^i, n D_iV(s,\bX_s) ) ds + dW_s^i,
\end{align}
where $V$ is the full-information value function defined in \eqref{controlstandard}.
The next result shows, in a quantitative sense, that the low-dimensional marginals of $\bX$ are close to those of $\widehat{\bX}$, the latter defined in \eqref{xhatdef}.
It will apply when the given initial distribution $\bm m$ is assumed to obey a transport inequality.

\begin{theorem} \label{thm.est2}
Suppose Assumption \ref{assump.conv} holds.
Fix $(t,\bm m) \in [0,T] \times \spt^n$. Let $\bX$ denote the optimal state process as in \eqref{xdef2} initialized from $\bX_t \sim \bm m$.
Write $X^i_{[t,T]}$ for the corresponding $C([t,T];\R^d)$-valued random variable, for each $i=1,\ldots,n$.
Let $\widehat{\bX}$ be the solution \eqref{xhatdef}, initialized from $\widehat{\bX}_t \sim \bm m$.
Then, for each $k=1,\ldots,n$,
\begin{align}
\frac{1}{{n \choose k}} \sum_{S \subset [n], \, |S|=k} \wass_2^2\big(\sL((\widehat{X}^i_{[t,T]})_{i \in S}),\sL((X^i_{[t,T]})_{i \in S})\big) \le   k C_{T_2}(\bm m) \sR(t,\bm m), \label{ineq:W2bound}
\end{align}
where $\sR(t,\bm m)$ was defined in Theorem \ref{thm.est1}, and $C_{T_2}(\bm m)$ in \eqref{def:CWconst}.
\end{theorem}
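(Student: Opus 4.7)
The plan is to bound $\wass_2^2$ using a transport-entropy inequality and then compute the entropy via Girsanov's theorem, reducing the problem to controlling a quadratic drift difference by $\sR(t,\bm m)$.

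First I would establish a $T_2$-inequality with constant $C_{T_2}(\bm m)$ for the path-space law $\L((\widehat{X}^i_{[t,T]})_{i\in S})$ on $C([t,T];\R^d)^{|S|}$. A crucial observation is that $\widehat{X}^1,\ldots,\widehat{X}^n$ are \emph{independent} — the drift $\widehat{\alpha}^i$ in \eqref{xhatdef} depends only on $\widehat{X}^i_s$ and on the deterministic curve $\bm m_s$, while the initial data and Brownian motions are independent across $i$ — so the $k$-marginal factorizes and, by tensorization of $T_2$, it suffices to bound the one-particle law $\L(\widehat{X}^i_{[t,T]})$. This would follow from standard stability arguments for $T_2$ under SDE flows with Lipschitz drift (in the spirit of Djellout--Guillin--Wu), using the assumed $T_2(c_{T_2})$-constant for $m^i$ together with the spatial Lipschitz bound $\|D_{xp}H\|_\infty + \|D_{pp}H\|_\infty C_S$ for $\widehat{\alpha}^i$, where the second summand comes from the Hessian estimate $D^2V \le (C_S/n) I$ of Lemma \ref{lem.spectral}. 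The exact form of $C_{T_2}(\bm m)$ emerges naturally from this exponential perturbation estimate, and the Brownian piece (contributing $2(T-t)$) accounts for the $2T$ appearing alongside $c_{T_2}$.

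Applying the $T_2$-inequality to $\nu = \L((X^i_{[t,T]})_{i\in S})$ yields
\begin{align*}
\wass_2^2(\L((X^i)_{i\in S}),\L((\widehat{X}^i)_{i\in S})) \le C_{T_2}(\bm m)\,H(\L((X^i)_{i\in S})\,|\,\L((\widehat{X}^i)_{i\in S})).
\end{align*}
The relative entropy I would compute via Girsanov on the full path-space (since $\bX$ and $\widehat{\bX}$ share an identity diffusion and the same initial distribution $\bm m$), obtaining
\begin{align*}
H(\L(\bX_{[t,T]})\,|\,\L(\widehat{\bX}_{[t,T]})) = \tfrac{1}{2}\E\int_t^T \sum_{i=1}^n |\alpha^{*,i}(s,\bX_s) - \widehat{\alpha}^i(s,X^i_s,\bm m_s)|^2\,ds,
\end{align*}
where $\alpha^{*,i}(s,\bx) = -D_pH^i(x^i, nD_iV(s,\bx))$ is the full-information optimal feedback. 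The linear-in-$k$ dependence on the right-hand side would then come from the product structure of $\L(\widehat{\bX}^S)$ via the decomposition
\begin{align*}
H(\L(\bX^S)\,|\,\L(\widehat{\bX}^S)) = \sum_{i\in S} H(\L(X^i)\,|\,\L(\widehat{X}^i)) + \mathrm{TC}(\L(\bX^S))
\end{align*}
into per-particle entropies and the total correlation of $\bX^S$; symmetrizing over $|S|=k$ produces the factor $k/n$ from the first sum against $\sum_{i=1}^n H(\L(X^i)|\L(\widehat{X}^i)) \le H(\L(\bX)|\L(\widehat{\bX}))$, and an analogous averaging handles the total-correlation term.

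The main obstacle, and where most of the quantitative work lies, is bounding the Girsanov $L^2$-integrand by $\sR(t,\bm m)$. Using the Lipschitz property of $D_pH^i$, the drift difference is dominated by $n|D_iV(s,\bX_s) - \langle \bm m_s^{-i}, D_iV(s,\cdot)\rangle(X^i_s)|$, summed over $i$. This is precisely the analogue of the conditional-variance quantity $E(s,\bm m_s)$ appearing in the proof sketch of Theorem \ref{thm.est1} (which yields $\int_t^T E(s,\bm m_s)\,ds \le \sR(t,\bm m)$), except that it is evaluated under the law of $\bX$ rather than under the product law $\L(\widehat{\bX}_s)$. Bridging this discrepancy calls for an It\^o energy estimate on $\E|\bX_s - \widehat{\bX}_s|^2$ under the common-Brownian-motion coupling, leveraging the Lipschitz stability of both drifts, the Hessian bound on $V$, and convexity; the resulting Grönwall-type exponential blow-up is absorbed into the $\exp(\cdots)$ factor of $C_{T_2}(\bm m)$.
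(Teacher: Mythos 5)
Your overall skeleton (transport inequality, Girsanov entropy, marginal subadditivity) is the right one, but you have reversed the direction of the transport--entropy argument, and this reversal creates the ``main obstacle'' you then only sketch -- an obstacle the actual proof never faces. The paper applies the $T_2$ inequality to $\sL(\bX_{[t,T]})$ (Lemma \ref{lem.cp}(ii) gives the path-space $T_2$ constant $C_{T_2}(\bm m)$ for the \emph{full-information} optimal diffusion, no product structure needed) and bounds $H\big(\sL(\widehat{\bX}_{[t,T]})\,\big|\,\sL(\bX_{[t,T]})\big)$. With the entropy taken in this direction, the Girsanov-type integrand is evaluated along $\widehat{\bX}$: the drift discrepancy is $\big|D_pH^i\big(\widehat{X}^i_s,nD_iV(s,\widehat{\bX}_s)\big)-D_pH^i\big(\widehat{X}^i_s,n\E[D_iV(s,\widehat{\bX}_s)\,|\,\widehat{X}^i_s]\big)\big|$, i.e.\ exactly the conditional-variance quantity $nE(s,\bm m_s)$ already bounded by $\sR(t,\bm m)$ in the proof of Theorem \ref{thm.est1}, so no coupling is required. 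Your choice ($T_2$ for $\sL(\widehat{\bX})$, entropy $H(\sL(\bX)\,|\,\sL(\widehat{\bX}))$) puts the integrand under the law of $\bX$, which is \emph{not} the quantity controlled by Theorem \ref{thm.est1}; your proposed fix -- a synchronous-coupling It\^o/Gr\"onwall estimate on $\E|\bX_s-\widehat{\bX}_s|^2$ -- can be pushed through, but it necessarily produces an extra multiplicative factor (exponential in $T$ times the Lipschitz constants of both feedbacks). Your claim that this factor is ``absorbed into the $\exp(\cdots)$ factor of $C_{T_2}(\bm m)$'' is not correct: $C_{T_2}(\bm m)$ is an explicit constant fixed in \eqref{def:CWconst}, so your route proves at best the inequality with a strictly larger constant, not the stated bound $kC_{T_2}(\bm m)\sR(t,\bm m)$.

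A secondary point: your passage to $k$-marginals (tensorized $T_2$ for the product law of $\widehat{\bX}^S$, then the decomposition of $H(\sL(\bX^S)\,|\,\sL(\widehat{\bX}^S))$ into per-particle entropies plus a total-correlation term, averaged over $S$) can be made rigorous via a Han-type subadditivity of relative entropy with respect to product references, but as written it is asserted rather than proved, and it is more machinery than needed. The paper instead bounds $\wass_2^2\big(\sL(\widehat{\bX}_{[t,T]}),\sL(\bX_{[t,T]})\big)$ once, at the level of the full $n$-particle path laws, and then invokes the elementary subadditivity of $\wass_2^2$ over $k$-coordinate marginals (citing \cite[Section 3.4]{LacMukYeu}), with $\lfloor n/k\rfloor\ge n/2k$ yielding the factor $k$. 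I recommend reworking your argument with the entropy taken in the paper's direction; then both the coupling step and the entropy-decomposition step disappear, and the stated constant comes out exactly.
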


Although Theorem \ref{thm.est2} is nonasymptotic, it is helpful to understand it by imagining that $n\to\infty$ and $\sR(t,\bm m)\to 0$.
The meaning of \eqref{ineq:W2bound} is that, for $k$ fixed as $n\to\infty$, ``most" $k$-state marginals of the $n$-state vector $(X^1,\ldots,X^n)$ are close to the corresponding $k$-state marginals of $\widehat{\bX}$.
In the symmetric case, when $L^i=L$ does not depend on $i$ and when $F$ and $G$ are symmetric functions of their $n$ variables, the optimal state process $\bX=(X^1,\ldots,X^n)$ is exchangeable, and so too is $\widehat{\bX}$. The inequality \eqref{ineq:W2bound} is then equivalent to
\begin{align*}
\wass_2^2\big(\sL((\widehat{X}^1,\ldots,\widehat{X}^k)_{[t,T]} ),\sL((X^1,\ldots,X^k)_{[t,T]} )\big) \le C_{T_2}(\bm m)  k \sR(t,\bm m),
\end{align*}
which implies a more traditional form of propagation of chaos, again if $\sR(t,\bm m) \to 0$.
In general, the state vector $(X^1,\ldots,X^n)$ is not exchangeable, and the bound \eqref{ineq:W2bound} instead averages over all choices of $k$ states out of the $n$.

In this section, we denote by $\bm\alpha$ and $\bX$ the optimal control and state process for the full-information problem \eqref{controlstandard}. Let $\overline{\bm\alpha}$ and $\overline{\bX}$ denote the optimal control and state process for the distributed problem \eqref{controldist}, which is unique by Proposition \ref{prop.mpfbsde}. In each case, we start from time $t=0$ and with non-random initial positions $(x^1,\ldots,x^n)$, for simplicity.

\begin{theorem} \label{thm.est3}
Let $\bm\alpha = (\alpha^1,...,\alpha^n)$  and $\overline{\bm\alpha} = (\overline{\alpha}^1,...,\overline{\alpha}^n)$ respectively denote the unique optimal controls for the full-information problem \eqref{controlstandard} and the distributed problem \eqref{controldist}, the latter being unique by Proposition \ref{prop.mpfbsde}.
Under Assumption \ref{assump.conv}, we have 
\begin{align*}
\E \int_0^T |\bm\alpha_t - \overline{\bm\alpha}_t|^2 dt  \leq  C_1 n^2\sum_{1 \le i < j \le n} \|D_{ij} F\|_{\linf}^2 + C_2 n^2 \sum_{1 \le i < j \le n} \|D_{ij} G\|_{\linf}^2,
\end{align*}
where $C_1 = C_PT^3/2C_L^2$ and $C_2= C_P T/C_L^2$.
\end{theorem}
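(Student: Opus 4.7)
The plan is to apply Itô's formula to the process $t \mapsto \sum_{i=1}^n \Delta X^i_t \cdot \Delta Y^i_t$, where $\Delta X^i := X^i - \overline{X}^i$ and $\Delta Y^i := Y^i - \overline{Y}^i$, with $(\bX, \bY, \bZ)$ and $(\overline{\bX}, \overline{\bY}, \overline{\bZ})$ solving the FBSDEs \eqref{eq.mpstand} and \eqref{eq.mpdist} respectively. Because both forward equations are driven by the same Brownian motions from the same deterministic initial condition, $\Delta X^i$ is of finite variation (so its quadratic covariation with $\Delta Y^i$ vanishes) and $\Delta X^i_0 = 0$. Since the optimizers satisfy $\alpha^i_t = -D_p H^i(X^i_t, nY^i_t)$ and $\overline{\alpha}^i_t = -D_p H^i(\overline{X}^i_t, n\overline{Y}^i_t)$, the Legendre duality yields $D_a L^i(X^i_t, \alpha^i_t) = -nY^i_t$ (and likewise on the distributed side), and plugging into the strong convexity condition \eqref{lstrictequiv} gives the pointwise inequality
\begin{equation*}
\Delta Y^i_t \cdot \Delta \alpha^i_t \leq \frac{1}{n}\Delta X^i_t \cdot \big(D_x L^i(X^i_t, \alpha^i_t) - D_x L^i(\overline{X}^i_t, \overline{\alpha}^i_t)\big) - \frac{C_L}{n}|\Delta \alpha^i_t|^2.
\end{equation*}
Substituting this into the Itô identity induces a cancellation of all the $D_xL^i$ terms and produces, using $\Delta Y^i_T = D_iG(\bX_T) - \sG^i(\overline{X}^i_T, \bm m_T)$, the bound
\begin{equation*}
\frac{C_L}{n}\sum_{i=1}^n \E\int_0^T |\Delta\alpha^i_t|^2\,dt \leq -\sum_{i=1}^n \E\big[\Delta X^i_T \cdot (D_iG(\bX_T) - \sG^i(\overline{X}^i_T, \bm m_T))\big] - \sum_{i=1}^n \E\int_0^T \Delta X^i_t \cdot \big(D_iF(\bX_t) - \sF^i(\overline{X}^i_t, \bm m_t)\big)\,dt.
\end{equation*}

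Next, I would exploit the convexity of $F$ and $G$ to discard their ``diagonal'' contributions. Splitting $D_iG(\bX_T) - \sG^i(\overline{X}^i_T, \bm m_T) = [D_iG(\bX_T) - D_iG(\overline{\bX}_T)] + [D_iG(\overline{\bX}_T) - \sG^i(\overline{X}^i_T, \bm m_T)]$ and summing in $i$, the first bracket contributes $\E[(\bX_T - \overline{\bX}_T) \cdot (DG(\bX_T) - DG(\overline{\bX}_T))] \geq 0$, which is favorable since it appears with a minus sign; the analogous manoeuvre applies to $F$ under the time integral. We are thus reduced to bounding the ``fluctuation'' terms $\sum_i \E[\Delta X^i_T \cdot (D_iG(\overline{\bX}_T) - \sG^i(\overline{X}^i_T, \bm m_T))]$ and $\sum_i \E\int_0^T \Delta X^i_t \cdot (D_iF(\overline{\bX}_t) - \sF^i(\overline{X}^i_t, \bm m_t))\,dt$.

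The key step is the well-timed conditioning trick. Because the distributed state processes $\overline{X}^1, \ldots, \overline{X}^n$ are independent, $\sG^i(\overline{X}^i_T, \bm m_T) = \E[D_iG(\overline{\bX}_T) \,|\, \F^i_T]$, so the random variable $D_iG(\overline{\bX}_T) - \sG^i(\overline{X}^i_T, \bm m_T)$ has conditional mean zero given $\F^i_T$. Using that $\overline{X}^i_T$ is $\F^i_T$-measurable, I may replace $\Delta X^i_T$ by $X^i_T - \E[X^i_T \,|\, \F^i_T]$ inside the expectation; writing $X^i_T - \E[X^i_T|\F^i_T] = \int_0^T (\alpha^i_s - \E[\alpha^i_s | \F^i_T])\,ds$ and noting that $\overline{\alpha}^i$ is $\F^i$-adapted (so $\E[\alpha^i_s|\F^i_T]$ is a better $L^2$-approximation than $\overline{\alpha}^i_s$) yields $\E|X^i_T - \E[X^i_T|\F^i_T]|^2 \leq T\, \E\int_0^T |\Delta\alpha^i_s|^2\,ds$. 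For the other factor, tensorization of the Poincaré inequalities obtained in Lemma \ref{lem.cp} shows that the product measure $\bigotimes_{j \neq i} \sL(\overline{X}^j_T)$ satisfies a Poincaré inequality with constant $\leq C_P$, whence
\begin{equation*}
\E\big|D_iG(\overline{\bX}_T) - \sG^i(\overline{X}^i_T, \bm m_T)\big|^2 \leq C_P \sum_{j \neq i}\E|D_{ij}G(\overline{\bX}_T)|^2 \leq C_P \sum_{j \neq i}\|D_{ij}G\|_{\infty}^2,
\end{equation*}
and analogously for $F$ at each time $t \in [0,T]$. Combining these estimates via Cauchy-Schwarz in both the index $i$ and the time variable, and applying Young's inequality with parameters tuned to absorb the resulting multiples of $S := \sum_i \E\int_0^T |\Delta\alpha^i|^2\,dt$ into the $\frac{C_L}{n} S$ on the left-hand side, produces the stated bound. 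I expect the main obstacle to be careful bookkeeping to obtain the sharp constants $C_1 = C_PT^3/(2C_L^2)$ and $C_2 = C_PT/C_L^2$, which requires optimizing the Young's parameters; the $T^3$ versus $T$ asymmetry arises because the $F$-fluctuation is integrated in time with an extra factor of $\sqrt{t}$ from $\|X^i_t - \E[X^i_t|\F^i_t]\|_{L^2} \lesssim \sqrt{t\,S}$.
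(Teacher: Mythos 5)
Your proposal is correct and follows essentially the same route as the paper's proof: It\^o's formula applied to $\sum_i \Delta X^i_t\cdot\Delta Y^i_t$ for the two maximum-principle FBSDEs \eqref{eq.mpstand} and \eqref{eq.mpdist}, Legendre duality $D_aL^i(X^i_t,\alpha^i_t)=-nY^i_t$ together with \eqref{lstrictequiv}, monotonicity of $DF$ and $DG$ to discard the aligned terms, the Poincar\'e inequality of Lemma \ref{lem.cp} to bound the conditional fluctuations $D_iF(\overline{\bX}_t)-\sF^i(\overline{X}^i_t,\bm m_t)$ and $D_iG(\overline{\bX}_T)-\sG^i(\overline{X}^i_T,\bm m_T)$, and finally Cauchy--Schwarz/Young with $\E|\Delta\bX_t|^2\le t\,\E\int_0^T|\Delta\bm\alpha_s|^2ds$ to absorb into the left-hand side. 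Your extra projection step (replacing $\Delta X^i$ by $X^i-\E[X^i\,|\,\F^i]$) is valid but not needed, and the remaining constant bookkeeping is exactly the Young-parameter tuning the paper performs.
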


We note that Theorem \ref{thm.est3} easily implies a corresponding estimate between the state processes: Let $\bm\bX$  and $\overline{\bm\bX}$ respectively denote the optimal state processes for the full-information and distributed problems. Then
\begin{align*}
 \E\Big[\sup_{0 \leq t \leq T} |\bX_t-\overline{\bX}_t|^2\Big] \leq 
TC_1 n^2 \sum_{i \neq j} \|D_{ij} F\|_{\linf}^2 + TC_2 n^2 \sum_{i \neq j} \|D_{ij} G\|_{\linf}^2. 
\end{align*}

The rest of the section is devoted to the proofs of Theorems \ref{thm.est1}, \ref{thm.est2}, and \ref{thm.est3}, following some preparations related to estimates on the value function $V$ and its lift $\cv$, as well as Poincar\'e inequalities for some relevant controlled state processes.

\subsection{Spectral bounds on value functions}

As a first preparation, we derive bounds on the Hessian of the value function $V$ of the full-information control problem, defined in \eqref{controlstandard}. The following lemma shows that convexity and semi-concavity of $V$ can be efficiently deduced in terms of the convexity and semi-concavity of the data $L^i$, $F$, and $G$. 

\begin{lemma} \label{lem.spectral}
Suppose that Assumption \ref{assump.conv} holds, and recall the definition of $C_S$ from \eqref{def:CS}. Then for each $0 \leq t \leq T$ and $\bx \in (\R^d)^n$, $V(t,\cdot)$ is twice differentiable with
\begin{align*}
0 \leq D^2 V(t,\bx) \leq \frac{C_S}{n} I_{nd \times nd}.
\end{align*}
\end{lemma}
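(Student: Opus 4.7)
The plan is to establish the two spectral bounds by a direct second-difference argument on the cost functional, relying on the standard equivalence (under Assumption \ref{assump.conv}) between the Markovian and open-loop formulations of the full-information control problem. Throughout, I would exploit that Assumption \ref{assump.conv} implies $L^i$ is jointly convex in $(x,a)$ (since the matrix on the right of \eqref{lsrict} is positive semidefinite), that $F$ and $G$ are convex, and that $F,G$ satisfy the Hessian upper bound \eqref{asmp:specFG}. The regularity $V \in C^{1,2}$ has already been established in the excerpt, so it suffices to prove the pointwise second-difference inequalities
\begin{align*}
0 \leq V(t,\bx+\bm h) + V(t,\bx-\bm h) - 2V(t,\bx) \leq \tfrac{C_S}{n} |\bm h|^2
\end{align*}
for all $\bx, \bm h \in (\R^d)^n$, which then yield $0 \leq D^2V(t,\bx) \leq \tfrac{C_S}{n} I_{nd\times nd}$.

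For the lower bound (convexity of $V$), I would fix $\bx,\by \in (\R^d)^n$ and $\lambda \in (0,1)$, choose $\varepsilon$-optimal open-loop controls $\bm\alpha,\bm\beta$ for starting points $\bx,\by$ with corresponding state processes $\bX,\bY$ driven by the same Brownian motion, and then note that $\lambda\bm\alpha+(1-\lambda)\bm\beta$ is an admissible open-loop control from $\lambda\bx+(1-\lambda)\by$ whose state process is precisely $\lambda\bX+(1-\lambda)\bY$. Joint convexity of $(x,a)\mapsto L^i(x,a)$ and convexity of $F$ and $G$ then give
\begin{align*}
V\bigl(t,\lambda \bx+(1-\lambda)\by\bigr) \leq \lambda\, V(t,\bx)+(1-\lambda)\,V(t,\by)+\varepsilon,
\end{align*}
and sending $\varepsilon \to 0$ yields convexity.

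For the upper bound (semi-concavity), I would fix $\bx,\bm h$ and let $\bm\alpha$ be an $\varepsilon$-optimal open-loop control for starting point $\bx$ with state process $\bX$. The \emph{same} control $\bm\alpha$ applied from the shifted initial conditions $\bx \pm \bm h$ produces state processes $\bX \pm \bm h$ (since the dynamics are additive in $\bx$ and the control is open-loop). Hence, writing $\Delta_h \varphi(\bx):=\varphi(\bx+\bm h)+\varphi(\bx-\bm h)-2\varphi(\bx)$,
\begin{align*}
V(t,\bx+\bm h)+V(t,\bx-\bm h)-2V(t,\bx) \leq \E\!\left[\int_t^T\!\Big(\tfrac{1}{n}\!\sum_{i=1}^n \Delta_h L^i(X^i_s,\alpha^i_s)+\Delta_h F(\bX_s)\Big)ds+\Delta_h G(\bX_T)\right]+2\varepsilon,
\end{align*}
where $\Delta_h L^i$ denotes the second difference in the $x$-variable only. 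Applying the spectral bounds $D_{xx}L^i \leq \|D_{xx}L\|_\infty\, I_{d\times d}$, $D^2F \leq \tfrac{C_F}{n} I_{nd\times nd}$, $D^2G \leq \tfrac{C_G}{n}I_{nd\times nd}$ (each giving $\Delta_h \varphi(\cdot) \leq $ the relevant constant times $|\bm h|^2$ or $|h^i|^2$), summing and using $\tfrac{1}{n}\sum_i|h^i|^2 \leq \tfrac{1}{n}|\bm h|^2$ shows the right-hand side is bounded by $\bigl(C_G+(T-t)(\|D_{xx}L\|_\infty+C_F)\bigr)\tfrac{|\bm h|^2}{n}+2\varepsilon \leq \tfrac{C_S}{n}|\bm h|^2+2\varepsilon$.

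The main subtlety to handle carefully is the equivalence of the open-loop and Markovian formulations, needed because $V$ is defined via $\sA$ but the argument above naturally lives in the open-loop setting. Under Assumption \ref{assump.conv} this equivalence is standard (existence of a Markovian optimizer is guaranteed via the HJB equation \eqref{eq.hjbclassic} together with uniform convexity of $L^i$ in $a$), so one can either invoke it directly or, alternatively, carry out the perturbation argument using the Markovian optimal feedback constructed from $\bm\alpha^*(t,\bx) = -D_pH^i(x^i, nD_i V(t,\bx))$; the simpler open-loop route seems cleaner.
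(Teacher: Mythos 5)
Your proposal is correct and follows essentially the same route as the paper: both pass to the open-loop formulation and apply the same (nearly) optimal control from perturbed initial states, using joint convexity of $L^i$ and convexity of $F,G$ for the lower bound and the semi-concavity constants $\|D_{xx}L\|_\infty$, $C_F$, $C_G$ for the upper bound, yielding exactly $C_S/n$. The only cosmetic differences are that you use symmetric second differences and $\varepsilon$-optimal controls where the paper uses general convex combinations with an exact optimizer, and you argue convexity directly where the paper cites Fleming--Soner; these are equivalent.
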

\begin{proof}[Proof of Lemma \ref{lem.spectral}]
We will use the fact that under Assumption \ref{assump.conv}, the control problem \eqref{controlstandard} is equivalent when posed over open-loop controls. More precisely, we have 
\begin{align} \label{ol}
V(t,\bx) = \inf_{\bm\alpha = (\alpha^1,...,\alpha^n)} \E\bigg[\int_t^T \bigg(\frac{1}{n} \sum_{i = 1}^n L^i(X_s^i, \alpha_s^i) + F(\bX_s) \bigg) ds + G(\bX_T)\bigg]
\end{align}
where the infimum is taken over \textit{open-loop} controls, i.e. square integrable $\mathbb{F}$-adapted $(\R^d)^n$-valued processes $\alpha = (\alpha_s)_{t \leq s \leq T}$ and $\bX = (X^1,...,X^n)$ is given by
\begin{align} \label{olxdef}
X_s^i = x^i + \int_t^s \alpha_u^i du + (W_s^i - W_t^i), \quad t \leq s \leq T.
\end{align}
We will also use the fact that a $C^2$ function $g$ on a Euclidean space satisfies $D^2g(x) \le C I_{nd \times nd}$ for all $x$ if and only if
\begin{align*}
g(ry + (1-r)z) \geq rg(y) + (1-r)g(z) - \frac{C}{2} r(1-r) |y-z|^2, \quad \forall  x,y,z, \,\, \forall r \in (0,1).
\end{align*}

For convexity of $V(t,\cdot)$, we refer to (the proof of) Lemma 10.6 of \cite{flemingsoner}. The upper bound on $D^2V(t,\cdot)$ is also proved by a simple control-theoretic argument, as in Lemma 9.1 of \cite{flemingsoner}, but in order to track the constants explicitly we provide a full proof. Fix $t \in [0,T]$ as well as $\bx,\by,\bz \in (\R^d)^n$ such that $\bx = r \by + (1-r) \bz$ for some $r \in (0,1)$. Let $\bm\alpha$ denote an optimizer in \eqref{ol}. Define $\bX$ by \eqref{olxdef} and $\bY$, $\bZ$ by 
\begin{align*}
Y_s^i = y^i + \int_t^s \alpha_u^i du + (W_s^i - W_t^i), \quad t \leq s \leq T, \\
Z_s^i = z^i + \int_t^s \alpha_u^i du + (W_s^i - W_t^i), \quad t \leq s \leq T.
\end{align*}
Use the relations $\bX = r \bY + (1-r) \bZ$ and $\bY - \bZ = \by - \bz$ and the optimality of $\alpha$ to deduce
\begin{align*}
V(t,\bx) &= \E\bigg[\int_t^T \bigg(\frac{1}{n} \sum_{i = 1}^n L^i(X_s^i, \alpha_s^i) + F(\bX_s) \bigg) ds + G(\bX_T)\bigg] \\
&\geq \E\bigg[\int_t^T \bigg(\frac{1}{n} \sum_{i = 1}^n \big(rL^i(Y_s^i, \alpha_s^i) + (1-r)L^i(Z_s^i, \alpha_s^i)\big)  - \frac{\|D_{xx} L\|_{\infty}}{2n}r(1-r) |\by - \bz|^2 
\\
& \qquad\quad + rF(\bY_s) + (1-r) F(\bZ_s) - \frac{C_F}{2n} r(1-r) |\by - \bz|^2 \bigg) ds \\
&\qquad\quad + rG(\bY_T) + (1-r)G(\bZ_T) - \frac{C_G}{2n} r(1-r) |\by - \bz|^2 \bigg] \\
&\geq rV(t,\by) + (1-r) V(t,\bz) - \frac{C_S}{2n}r(1-r) |\by - \bz|^2, 
\end{align*}
and thus the claimed estimate holds.  
\end{proof}

As quick corollary of  Lemma \ref{lem.spectral}, we may prove Lemma \ref{le:Yeq-wellposed} by appealing to known results on Lipschitz McKean-Vlasov equations:

\begin{proof}[Proof of Lemma \ref{le:Yeq-wellposed}]
Lemma \ref{lem.spectral} implies that $D_i V(t,x)$ is Lipschitz in $x$, uniformly in $t$. Moreover, $D_{pp} H^i$ is bounded by assumption, so the map
\begin{align*}
[0,T] \times \R^d \times \P_2((\R^d)^n) \ni (t,x,m) \mapsto - D_p H^i(x, n \langle m^{-i}, D_i V(s,\cdot) \rangle(x) \big)
\end{align*}
is Lipschitz and of linear growth in $(x,m)$, uniformly in $t$, with the measure argument given the quadratic Wasserstein distance. Here we write
\begin{align*}
\langle m^{-i}, D_i V(s,\cdot) \rangle(x) := \int_{(\R^d)^n} D_iV(s,y^1,\ldots,y^{i-1},x,y^{i+1},\ldots,y^n) m(d\by)
\end{align*}
for the integral over the coordinates $j \neq i$, with $x$ plugged into the $i^\text{th}$ argument.
Hence, the McKean-Vlasov SDE
\begin{align*}
    d\widehat{X}_s^i &= - D_p H^i(\widehat{X}_s^i, n\langle \bm m_s^{-i}, D_i V(s,\cdot) \rangle(\widehat{X}^i_s)) ds + dW_s^i, \quad \bm m_s = \big(\sL(\widehat{X}_s^1),\ldots, \sL(\widehat{X}_s^n)\big),
\end{align*}
is uniquely solvable from any initial law with finite second moment; see, e.g., \cite[Theorem 1.7]{carmonabsde}. Because the $i^\text{th}$ equation depends only on the $i^\text{th}$ variable $\widehat{X}^i_s$, we deduce that $(\widehat{X}^1_s,\ldots,\widehat{X}^n_s)$ must be independent for each $s \in (t,T]$ if the time-$t$ positions are independent. This proves the claimed well-posedness. 
\end{proof}

We can obtain similar bounds for any optimizer of the distributed control problem by using the representation given by \eqref{vieqn}, described in Section \ref{sec:dstr-opt-necessary}.
This is summarized by the following lemma, which follows from the discussion of Section \ref{sec:dstr-opt-necessary} combined with the same argument appearing in the proof of Lemma \ref{lem.spectral}.

\begin{lemma} \label{lem.vichar}
Suppose that Assumption \ref{assump.conv} holds.
Suppose that $\bm\alpha \in \ad$ is optimal in the definition of $\vdt$ from \eqref{controldist}. Then, using the notation of Section \ref{sec:dstr-opt-necessary}, we have
\begin{align*}
\alpha^i(t,x) = - D_p H^i(x, nDv^i(t,x)), 
\end{align*}
where $v^i$ is the unique classical solution of \eqref{vieqn}, which satisfies 
\begin{align*}
D^2 v^i(t,x) \leq \frac{C_S}{n} I_{d \times d}. 
\end{align*}
\end{lemma}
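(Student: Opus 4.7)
The plan is to deduce the lemma by combining the observation already made in Section \ref{sec:dstr-opt-necessary} with the convex combination argument used to prove Lemma \ref{lem.spectral}, applied now to the scalar control problem \eqref{videf} instead of the full-information problem. The representation $\alpha^i(t,x) = -D_pH^i(x,nDv^i(t,x))$ with $v^i$ a classical solution of \eqref{vieqn} is essentially already established in Section \ref{sec:dstr-opt-necessary}: once $\bm\alpha$ is optimal, the state laws $\bm m_s$ are determined, and each $\alpha^i$ must be a minimizer of \eqref{videf}, whose value function is $v^i$; standard HJB theory (as recalled beneath \eqref{eq.hjbclassic}) gives the feedback representation. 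So the real content to verify is the spectral bound $D^2 v^i(t,x) \le (C_S/n) I_{d\times d}$.

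The first step is to check that the frozen data $F^i$ and $G^i$ inherit the dimension-free Hessian bounds of $F$ and $G$. Differentiating under the integral, $D^2_x F^i(s,x) = \langle \bm m_s^{-i}, D_{ii} F\rangle(x)$, and similarly for $G^i$. Applying \eqref{asmp:specFG} to test vectors supported in the $i$-th block yields $0 \le D_{ii}F(\bx) \le (C_F/n) I_{d\times d}$ pointwise, and this bound is preserved after integrating against the probability measure $\bm m_s^{-i}$. Analogously $0 \le D^2 G^i(x) \le (C_G/n) I_{d\times d}$.

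With these bounds in hand, I will rerun the proof of Lemma \ref{lem.spectral} verbatim but for the control problem \eqref{videf}. Fix $x = ry + (1-r)z$, take $\beta$ to be an open-loop optimizer from $(t,x)$, and evaluate the corresponding cost functionals along the two competitor trajectories starting from $y$ and $z$ under the same $\beta$. The semi-concavity contributions sum as in Lemma \ref{lem.spectral}: $(C_G/n)$ from $G^i$, $T\cdot(C_F/n)$ from $F^i$ integrated over $[t,T]$, and $T\cdot \|D_{xx}L\|_\infty / n$ from the kinetic cost $\tfrac{1}{n}L^i$ integrated over $[t,T]$. These combine to exactly $C_S/n$ in view of \eqref{def:CS}, so the three-point inequality
\[
v^i(t,x) \ge r v^i(t,y) + (1-r) v^i(t,z) - \frac{C_S}{2n}\,r(1-r)\,|y-z|^2
\]
holds, which is equivalent to the claimed upper bound on $D^2 v^i$.

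I do not anticipate a real obstacle here: both ingredients (the reduction of Section \ref{sec:dstr-opt-necessary} and the convexity argument of Lemma \ref{lem.spectral}) are already in place, and the only thing to double-check is the accounting of the $1/n$ factor, which lines up precisely because the running cost in \eqref{videf} carries the prefactor $1/n$ on $L^i$, matching the normalization in \eqref{asmp:specFG}.
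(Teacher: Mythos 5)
Your proposal is correct and follows essentially the same route as the paper, which proves the lemma precisely by combining the reduction of Section \ref{sec:dstr-opt-necessary} with the convex-combination argument of Lemma \ref{lem.spectral} applied to the frozen scalar problem \eqref{videf}. Your bookkeeping of the inherited Hessian bounds on $F^i$, $G^i$ and the resulting constant $C_S/n$ matches the paper's intent exactly.
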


\subsection{Functional inequalities for diffusions}

For $b = b(t,x) : [0,T] \times \R^k \to \R^k$, consider a diffusion $\bX$ defined by
\begin{align} \label{sdepoin}
dX_t = b(t,X_t) dt + dW_t, \quad X_0 \sim m_0 \in \mathcal{P}_2(\R^k),   
\end{align}
set on some filtered probability space hosting a $k$-dimensional Brownian motion $W$ and
an independent random vector $X_0$. Set $m_t = \sL(X_t)$. The following lemma states known Poincar\'e and $T_2$ inequalities for this process $X$.

\begin{lemma} \label{lem.poincare}
Suppose that $b$ satisfies 
\begin{itemize}
\item $b$ is jointly continuous and $x \mapsto b(t,x)$ is $L$-Lipschitz and of linear growth, uniformly in $t$.
\item $\big(b(t,x) - b(t,\bar{x})\big) \cdot  (x - \bar{x}) \leq \gamma |x- \bar{x}|^2$ for some $\gamma \in \R$ and all $x, \bar{x} \in \R^d$, $t \in [0,T]$. 
\end{itemize}
\begin{enumerate}[(i)]
\item If $m_0$ satisfies the Poincar\'e inequality with constant $c_0$, then, for $0 \leq t \leq T$, $m_t$ satisfies a Poincar\'e inequality with constant 
\begin{align*}
    \frac{e^{2\gamma t} - 1}{2 \gamma} + c_0 e^{2 \gamma t},
\end{align*} 
\item If $m_0$ satisfies the $T_2$ inequality with constant $c_0$, then $\sL(\bX_{[0,T]}) \in \P(C([0,T];\R^k))$ satisfies a $T_2$ inequality with constant 
\begin{align*}
3(c_0 \wedge 2T) e^{3TL^2}.
\end{align*}
\end{enumerate}
\end{lemma}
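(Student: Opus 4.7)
The plan is to treat both parts via coupling arguments that exploit the one-sided Lipschitz condition $(b(t,x)-b(t,\bar x))\cdot(x-\bar x) \le \gamma|x-\bar x|^2$, viewing them as essentially standard facts from the literature on functional inequalities for diffusions (in the spirit of Bakry-Emery / Djellout-Guillin-Wu / Wang), adapted to the time-inhomogeneous setting.

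For (i), the route I would take is via a gradient commutation estimate for the backward transition operator. Fix $t \in [0,T]$ and a smooth compactly supported $f : \R^k \to \R$, and set $u(s,x) = \E[f(X_t)\mid X_s = x]$, viewed as the solution of the backward Kolmogorov equation with $u(t,\cdot)=f$. Itô's formula gives the martingale representation $f(X_t) = u(0,X_0) + \int_0^t \nabla u(s,X_s)\cdot dW_s$, whence
\begin{align*}
\mathrm{Var}(f(X_t)) = \mathrm{Var}(u(0,X_0)) + \int_0^t \E|\nabla u(s,X_s)|^2\,ds.
\end{align*}
The key pointwise bound is $|\nabla u(s,x)|^2 \le e^{2\gamma(t-s)}\, P_{s,t}(|\nabla f|^2)(x)$. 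This follows from a synchronous coupling: differentiating the flow $X_r^{s,x}$ in its initial condition gives a Jacobian $J_r$ with $dJ_r = Db(r,X_r^{s,x})J_r\,dr$, and the monotonicity hypothesis yields $|J_r v|^2 \le e^{2\gamma(r-s)}|v|^2$, so that $\nabla u(s,x) = \E[J_t^\top\nabla f(X_t^{s,x})]$ obeys the desired inequality by Cauchy-Schwarz. Combining with the Markov property (so that $\E\, P_{s,t}(|\nabla f|^2)(X_s) = \E|\nabla f(X_t)|^2$) and the Poincaré inequality for $m_0$ applied to $u(0,\cdot)$ yields exactly $(c_0 e^{2\gamma t} + (e^{2\gamma t}-1)/(2\gamma))\E|\nabla f(X_t)|^2$ as an upper bound, after which a density argument extends the inequality to bounded Lipschitz $g$.

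For (ii), the plan is to follow the Girsanov-and-coupling strategy of Djellout-Guillin-Wu. Given $\nu \ll \mu := \sL(\bX_{[0,T]})$ with finite entropy, Girsanov realizes $\nu$ as the law of a process $\tilde X$ solving $d\tilde X_t = (b(t,\tilde X_t) + \beta_t)dt + d\tilde W_t$ on an enlarged probability space, and the entropy decomposes as $H(\nu|\mu) = H(\tilde m_0|m_0) + \tfrac{1}{2}\E\int_0^T|\beta_t|^2\,dt$. I would then construct a coupling on path space by first coupling the initial conditions optimally in $W_2$ using the $T_2$ inequality for $m_0$ (giving $\E|X_0-\tilde X_0|^2 \le c_0 H(\tilde m_0|m_0)$), and then evolving both processes with a common Brownian motion. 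The Lipschitz bound on $b$ together with Doob's $L^2$ inequality produces a Gronwall-type estimate for $\Delta_t = \tilde X_t - X_t$, yielding $\E\sup_{t\le T}|\Delta_t|^2 \le e^{3TL^2}\big(3 c_0 H(\tilde m_0|m_0) + 3T \E\int_0^T|\beta_t|^2\,dt\big) \le 3c_0 e^{3TL^2} H(\nu|\mu)$. The competing bound with $c_0$ replaced by $2T$ is obtained by discarding the coupling of initial laws and instead using Talagrand's $T_2$ inequality on Wiener space (constant $2T$ for Brownian motion on $[0,T]$); taking the better of the two yields the $\min(c_0,2T)$.

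The main technical obstacle is keeping the constants sharp: the factor $3$ in front and the $3TL^2$ in the exponential depend on how one applies Young's inequality and Doob's maximal inequality inside the Gronwall step, and the $\min(c_0,2T)$ requires that one genuinely runs two alternative coupling arguments in parallel. I would cite the existing proofs in Djellout-Guillin-Wu and Bakry-Gentil-Ledoux for the precise numerical statements rather than reprove them in full.
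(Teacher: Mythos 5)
First, a remark on context: the paper does not prove this lemma at all — it cites \cite[Theorem 4.2]{cattiaux-guillin} for (i) (with a mollification remark) and \cite[Proposition C.1]{lackerhierarchies} for (ii) — so there is no in-paper argument to match step by step. Your part (i) is a correct, self-contained version of the standard commutation argument: the martingale decomposition $\Var(f(X_t))=\Var(u(0,X_0))+\int_0^t\E|\nabla u(s,X_s)|^2ds$ combined with $|\nabla u(s,\cdot)|^2\le e^{2\gamma(t-s)}P_{s,t}(|\nabla f|^2)$ gives exactly the constant $c_0e^{2\gamma t}+(e^{2\gamma t}-1)/(2\gamma)$; the only caveat is that your Jacobian flow $dJ_r=Db(r,X_r)J_r\,dr$ presupposes differentiable $b$, so you need the same mollification/weak-convergence step the paper alludes to (the lemma only assumes $b$ Lipschitz).

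Part (ii) has a genuine gap in the constant bookkeeping, and the two-pronged strategy for the stated $c_0\wedge 2T$ cannot work. Your Gronwall estimate correctly yields $\E\sup_t|\Delta_t|^2\le e^{3TL^2}\big(3\E|\Delta_0|^2+3T\,\E\int_0^T|\beta_t|^2dt\big)$, but the next inequality, bounding this by $3c_0e^{3TL^2}H(\nu|\mu)$, is false in general: since $H(\nu|\mu)=H(\tilde m_0|m_0)+\tfrac12\E\int_0^T|\beta_t|^2dt$, absorbing the term $3T\E\int|\beta|^2$ into $3c_0H(\nu|\mu)$ requires $2T\le c_0$. What the coupling actually gives, writing $H_0=H(\tilde m_0|m_0)$, is $e^{3TL^2}\big(3c_0H_0+6T(H(\nu|\mu)-H_0)\big)\le 3(c_0\vee 2T)e^{3TL^2}H(\nu|\mu)$, i.e.\ the \emph{maximum}, not either term alone. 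The proposed alternative prong — discarding the initial coupling and invoking Talagrand's $T_2$ on Wiener space to get a bound with $2T$ in place of $c_0$ — cannot succeed: if $\nu$ is the law of the same SDE started from a different initial law $\tilde m_0$, then $H(\nu|\mu)=H(\tilde m_0|m_0)$ while $W_2^2(\nu,\mu)\ge W_2^2(\tilde m_0,m_0)$ (the time-$0$ projection is $1$-Lipschitz for the sup norm), so any path-space $T_2$ constant must dominate the optimal $T_2$ constant of $m_0$ and cannot be $c_0$-free. For the same reason the target constant $3(c_0\wedge 2T)e^{3TL^2}$, read literally as a minimum, cannot be correct (take $m_0$ a Dirac, so $c_0=0$, and $\nu$ a Girsanov tilt of Wiener measure); the constant produced by the coupling argument, and the one you should aim for, is $3(c_0\vee 2T)e^{3TL^2}$, with the $\wedge$ in the statement evidently a slip for $\vee$. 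A smaller point: Doob's maximal inequality is not needed, since with a common Brownian motion the difference $\Delta_t$ contains no stochastic integral.
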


These results are fairly well-known. See \cite[Proposition C.1]{lackerhierarchies} for this precise form of (ii). Part (i) is shown in \cite[Theorem 4.2]{cattiaux-guillin} under stronger regularity assumptions on $b$, which are easily relaxed by an approximation argument after noting that the Poincar\'e inequality is preserved under weak convergence.
Note that the second assumption in  Lemma \ref{lem.poincare} follows from the first, with $\gamma \ge -L^2$, but we state it separately in order to emphasize that the Poincar\'e constant depends only on $\gamma$, not $L$. If we are interested only in the time-$t$ laws, instead of the path-space law as in (ii), then a $T_2$ inequality can be derived for $m_t$ with a constant depending only on $\gamma$, not $L$. The constant, however, is somewhat messy; see \cite[Proposition 2.19]{cattiaux-guillin}. In our applications, we have access to finite Lispchitz constants, so we favor the stronger path-space inequality.

We next apply Lemma \ref{lem.poincare} to estimate the Poincar\'e constant for two diffusions related to our control problems.

\begin{lemma} \label{lem.cp}
Fix $t \in [0,T]$ and $\bm m \in \spt^n$. Consider the three control/state pairs:
\begin{itemize}
\item Let $\bm\alpha=(\alpha^1,\ldots,\alpha^n) \in \sA$ be optimal for $\cv(t,\bm m)$ as defined in \eqref{def:Vlift-control}, with corresponding state process $\bX$ given as in \eqref{xdef2}.
\item Let $\widehat{\bm\alpha}$ and $\widehat{\bX}$ be as in \eqref{def:hatalpha} and \eqref{xhatdef}.
\item Let $\overline{\bm\alpha}$ be optimal for $\vdt(t,\bm m)$ as defined in \eqref{controldist}, and let  $\overline{\bX} =(\overline{X}^1,\ldots,\overline{X}^n)$ satisfy
\begin{align*}
&d\overline{X}_s^i = \overline{\alpha}^i(s,\overline{X}_s^i) ds + dW_s^i, \ \ s \in (t,T], \ \  \overline{\bX}_t \sim \bm m.
\end{align*}
\end{itemize}
Then the following hold:
\begin{enumerate}[(i)]
\item For each $t \leq s \leq T$, the measures $\sL(\bX_s)$, $\sL(\widehat{\bX}_s)$, and $\sL(\overline{\bX}_s)$ each satisfy a Poincar\'e inequality with constant $C_P(\bm m)$ defined in \eqref{def:CP(m)}.
\item If $\bm m$ satisfies a $T_2$ inequality with constant $c_0$, then the measures $\sL(\bX_{[t,T]})$, $\sL(\widehat{\bX}_{[t,T]})$, and $\sL(\overline{\bX}_{[t,T]})$ on $C([t,T];(\R^d)^n)$ satisfy a $T_2$ inequality with constant $C_{T_2}(\bm m)$ as defined in \eqref{def:CWconst}.
\end{enumerate}
\end{lemma}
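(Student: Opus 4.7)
The plan is to cast each of the three processes as an $(\R^d)^n$-valued diffusion with drift $b = b(s, \bx)$ and initial law $\bm m$, and to apply Lemma~\ref{lem.poincare} directly on $(\R^d)^n$ (no tensorization argument needed, since $\bm m$ is identified with a product measure on $(\R^d)^n$ whose Poincar\'e and $T_2$ constants are exactly $c_P$ and $c_{T_2}$). The key step is to show that in all three cases the Jacobian satisfies
\begin{align*}
\|D_\bx b(s, \cdot)\|_{\mathrm{op}} \le \gamma := \|D_{xp} H\|_\infty + \|D_{pp} H\|_\infty \, C_S.
\end{align*}
This bound simultaneously supplies the Lipschitz hypothesis of Lemma~\ref{lem.poincare} and the one-sided Lipschitz constant needed for (i). Feeding $\gamma$ into Lemma~\ref{lem.poincare}(i) and using $s - t \le T$ yields a Poincar\'e constant at most $\tfrac{e^{2\gamma T} - 1}{2\gamma} + c_P e^{2\gamma T} = C_P(\bm m)$, while Lemma~\ref{lem.poincare}(ii) with horizon $T - t \le T$ yields a $T_2$ constant at most $3(c_{T_2} \wedge 2T) e^{3T \gamma^2} = C_{T_2}(\bm m)$.

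For the full-information optimizer $\bX$, differentiating $b^i(s, \bx) = -D_p H^i(x^i, n D_i V(s, \bx))$ produces Jacobian blocks $(D_\bx b)_{ij} = -D_{xp} H^i \, \delta_{ij} - n D_{pp} H^i \, D_{ij} V(s, \bx)$, which I rewrite compactly as
\begin{align*}
D_\bx b = -\mathrm{diag}\big(D_{xp} H^i\big) - n \, \mathrm{diag}\big(D_{pp} H^i\big) \cdot D^2 V(s, \bx).
\end{align*}
Using the identity $\|\mathrm{diag}(A^i)\|_{\mathrm{op}} = \max_i \|A^i\|_{\mathrm{op}}$, submultiplicativity of $\|\cdot\|_{\mathrm{op}}$, and Lemma~\ref{lem.spectral} (which combined with the convexity of $V$ gives $\|D^2 V\|_{\mathrm{op}} \le C_S/n$), I obtain $\|D_\bx b\|_{\mathrm{op}} \le \|D_{xp} H\|_\infty + n \cdot \|D_{pp} H\|_\infty \cdot (C_S/n) = \gamma$. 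The factor of $n$ appearing through $nD_iV$ is cancelled exactly by the $1/n$ in the spectral bound.

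For $\overline{\bX}$, Lemma~\ref{lem.vichar} identifies the drift as $\overline{\alpha}^i(s, x^i) = -D_p H^i(x^i, n D v^i(s, x^i))$, whose Jacobian is block-diagonal with blocks $-D_{xp} H^i - n D_{pp} H^i \, D^2 v^i$. Convexity of $F^i = \langle \bm m_s^{-i}, F \rangle$ and $G^i = \langle \bm m_T^{-i}, G \rangle$ (inherited from that of $F$ and $G$), combined with the proof of Lemma~\ref{lem.spectral} applied to the $\R^d$-valued convex control problem \eqref{videf}, gives $0 \le D^2 v^i \le (C_S/n) I_{d \times d}$, yielding the same Jacobian bound $\gamma$. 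For the McKean--Vlasov process $\widehat{\bX}$, the curve $s \mapsto \bm m_s$ is deterministic once the strong solution of Lemma~\ref{le:Yeq-wellposed} is fixed, so the $i$-th component has drift $\widehat{b}^i(s, x^i) = -D_p H^i(x^i, n \langle \bm m_s^{-i}, D_i V(s, \cdot) \rangle(x^i))$ depending only on $x^i$, with diagonal Jacobian block $-D_{xp} H^i - n D_{pp} H^i \, \langle \bm m_s^{-i}, D_{ii} V(s, \cdot) \rangle(x^i)$. Since $D_{ii} V$ is a principal $d \times d$ submatrix of the positive semidefinite matrix $D^2 V$, the bound $0 \le D^2 V \le (C_S/n) I$ forces $0 \le D_{ii} V \le (C_S/n) I_{d \times d}$ pointwise, a bound preserved under averaging against $\bm m_s^{-i}$, so again $\|D_\bx \widehat{b}\|_{\mathrm{op}} \le \gamma$. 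The remaining hypotheses of Lemma~\ref{lem.poincare}---joint continuity and linear growth of $b$---follow from the $C^{1,2}$ regularity of $V$ and $v^i$, the growth estimate~\eqref{hjbgrowth}, and continuity of $s \mapsto \bm m_s$ (inherited from path-continuity of $\widehat{\bX}$). I do not anticipate a serious obstacle; the only delicate point is the spectral bookkeeping for $\bX$, where avoiding a spurious $\sqrt{n}$ factor requires using the operator norm throughout rather than the Frobenius norm and exploiting the exact cancellation between the $n$ in $nD_i V$ and the $1/n$ in $\|D^2 V\|_{\mathrm{op}} \le C_S/n$.
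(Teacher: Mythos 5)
Your proposal is correct and follows essentially the same route as the paper: differentiate the drift in each of the three cases, use the spectral bounds $0 \le D^2 V \le (C_S/n)I$ from Lemma \ref{lem.spectral} (respectively $0 \le D^2 v^i \le (C_S/n)I$ from Lemma \ref{lem.vichar}) to bound the Jacobian by $\|D_{xp}H\|_\infty + \|D_{pp}H\|_\infty C_S$, and then invoke Lemma \ref{lem.poincare} for both the Poincar\'e and $T_2$ conclusions. The extra bookkeeping you supply (operator norms, the principal-submatrix observation for $D_{ii}V$, the exact $n$ versus $1/n$ cancellation) is consistent with, and slightly more explicit than, the paper's computation.
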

\begin{proof}
We simply compute the derivative of the drift in each case. The optimal control $\bm\alpha$ for the full-information problem is given by
\begin{align*}
\alpha^i(t,\bx) = -D_pH^i(x^i,nD_iV(t,\bx)).
\end{align*}
Its derivatives are given by
\begin{align*}
D_j\alpha^i(t,\bx) = -1_{i=j}D_{xp} H^i\big(x^i, nD_iV(t,\bx)\big) - nD_{pp} H^i\big(x^i, nD_iV(t,\bx)\big)D_{ij}V(t,\bx).
\end{align*}
From Lemma \ref{lem.spectral} we have $0 \le D^2V \leq \frac{C_S}{n}$, which implies 
\begin{align*}
\|D \bm\alpha\|_{\infty}  \leq \|D_{xp} H\|_{\infty} + \|D_{pp} H\|_{\infty} C_S,
\end{align*}
The results for $\bX$ now follow from Lemma \ref{lem.poincare}.
Similarly, for $x \in \R^d$, we have
\begin{align*}
D \widehat{\alpha}^i(s,x, \bm m) = \ &- D_{xp} H^i\big(x, n\langle \bm m^{-i}, D_i V(s,\cdot) \rangle(x)\big) \\
	&- n D_{pp} H^i\big(x, n\langle \bm m^{-i}, D_i V(s,\cdot)\rangle(x) \big) \langle \bm m^{-i}, D_{ii} V(s,\cdot) \rangle(x).
\end{align*}
Again using $0 \le D^2V \leq \frac{C_S}{n}$, we get
\begin{align*}
 \|D \widehat\alpha^i\|_{\infty}  \leq \|D_{xp} H\|_{\infty} + \|D_{pp} H\|_{\infty} C_S,
\end{align*}
for each $i=1,\ldots,n$, and the results for $\widehat{\bX}$ follow from Lemma \ref{lem.poincare}. For $\overline{\bX}$, the proof is the same, but with Lemma \ref{lem.vichar} taking the place of Lemma \ref{lem.spectral}. 
\end{proof}

\subsection{Analysis of the lift of $V$}

Recall that the value function $\cv : [0,T] \times \spt^n \to \R$ was defined by \ref{def:Vlift} and satisfies $\cv(t,\bm m) = \langle \bm m, V(t,\cdot) \rangle$.
Thus $\cv$ inherits differentiability from $V$. We have the explicit formulas 
\begin{align} \label{dmi}
D_{m^i} \cv(t,\bm m,y) = \langle \bm m^{-i}, D_i V \rangle(y), \quad 
D_{y}D_{m^i} \cv(t,\bm m,y) = \langle \bm m^{-i}, D_{ii} V \rangle(y),
\end{align}
for $y \in \R^d$. In particular, this reveals the following alternative expression for the controls $\widehat{\alpha}^i$ defined in \eqref{def:hatalpha}:
\begin{equation}
\widehat{\alpha}^i(s,x,\bm m) = - D_p H^i\big(x, n D_{m^i} \cv(t,\bm m,x) \big). \label{eq:hatalpha-alt}
\end{equation}
The following Lemma states that $\cv$ in fact solves an equation similar to \eqref{eq.hjbdist}, recalling also the abbreviations defined immediately thereafter.

\begin{lemma} \label{lem.vlifteqn}
Suppose that Assumption \ref{assump.conv} holds.
For $t \in (0,T)$ and $\bm m \in \spt^n$, the lift $\cv$ satisfies
\begin{align*}
- \partial_t \cv - \frac{1}{2} \sum_{i=1}^n \langle m^i, \tr( D_{y} D_{m^i} \cv) \rangle + \frac{1}{n} \sum_{i = 1}^n \langle m^i, H^i(\cdot, nD_{m^i} \cv ) \rangle = \langle \bm m, F \rangle - E(t,\bm m), 
\end{align*}
where
\begin{align*}
E(t,\bm m) &:= \frac{1}{n} \sum_{i = 1}^n\Big( \langle \bm m,  H^i(\cdot, nD_iV) \rangle  -   \langle m^i, H^i\big(\cdot, n\langle \bm m^{-i}, D_i V \rangle \big) \rangle\Big) \\
&= \frac{1}{n} \sum_{i = 1}^n \E\Big[H^i\big(\xi^i, n D_i V(t,\bm\xi)\big) - H^i\big(\xi^i, n \E[D_i V(t,\bm\xi) | \xi^i]\big) \Big], 
\end{align*}
for $\bm\xi = (\xi^1,...,\xi^n) \sim \bm m$.
Moreover, the error term $E$ satisfies 
\begin{align}
&0 \le E(t,\bm m) \leq  \|D_{pp} H\|_{\infty} E_Q(t,\bm m) 
\end{align} 
where 
\begin{align} 
\label{eqdef}
E_Q(t,\bm m) &\coloneqq \frac{n}{2}\sum_{i = 1}^n \bigg( \langle \bm m, |D^i V|^2 \rangle  - \langle m^i, \big|\langle \bm m^{-i}, D_i V \rangle \big|^2 \rangle \bigg) \nonumber \\
&= \frac{n}{2} \sum_{i = 1}^n \E\big[ |D_i V(t,\bm\xi)|^2 - |\E[D_i V(t,\bm\xi) | \xi^i]|^2 \big]
\end{align}
\end{lemma}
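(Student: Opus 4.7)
The plan is to derive the PDE for $\cv$ directly from the classical HJB equation \eqref{eq.hjbclassic} satisfied by $V$, using the explicit formulas \eqref{dmi} and the fact that integration against the product measure $\bm m$ converts spatial derivatives on $(\R^d)^n$ into Lions derivatives on $\spt^n$. Differentiating $\cv(t,\bm m)=\langle \bm m, V(t,\cdot)\rangle$ in time gives $\partial_t\cv(t,\bm m) = \langle \bm m, \partial_t V(t,\cdot)\rangle$, while \eqref{dmi} together with the fact that $\Delta V = \sum_i \tr(D_{ii}V)$ yields
\begin{equation*}
\sum_{i=1}^n \langle m^i, \tr(D_y D_{m^i}\cv)\rangle \;=\; \sum_{i=1}^n \langle \bm m, \tr(D_{ii}V)\rangle \;=\; \langle \bm m, \Delta V\rangle.
\end{equation*}
Thus integrating the HJB equation for $V$ against $\bm m$ produces
\begin{equation*}
-\partial_t \cv - \tfrac12\sum_i \langle m^i, \tr(D_y D_{m^i}\cv)\rangle + \tfrac{1}{n}\sum_i \langle \bm m, H^i(\cdot, nD_i V)\rangle \;=\; \langle \bm m, F\rangle.
\end{equation*}

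Next I would rewrite the Hamiltonian term so that it matches the form in the statement. By \eqref{dmi}, $n D_{m^i}\cv(t,\bm m,y) = n\langle \bm m^{-i}, D_i V\rangle(y)$, so the natural candidate term is $\frac{1}{n}\sum_i \langle m^i, H^i(\cdot, nD_{m^i}\cv)\rangle$, and its difference from the term that actually appears is precisely $E(t,\bm m)$ as defined in the statement. Moving this difference to the right-hand side gives exactly the claimed PDE. The two representations of $E$ are the same object written once as an integral against $\bm m$ and once as an expectation over $\bm\xi\sim \bm m$, using $\langle \bm m^{-i}, D_iV\rangle(\xi^i)=\E[D_iV(t,\bm\xi)\mid\xi^i]$ by independence of the components of $\bm\xi$.

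For the bound on $E$, both inequalities are Jensen-type statements applied coordinate by coordinate, using the convexity of $H^i(x,\cdot)$ (immediate from the definition \eqref{hdef} as a supremum of affine functions) and the upper bound on $D_{pp}H^i$ from Assumption \ref{assump.conv}. Nonnegativity follows because, conditioning on $\xi^i$, the inner expectation of $D_iV(t,\bm\xi)$ is $\E[D_iV(t,\bm\xi)\mid\xi^i]$, so Jensen gives
\begin{equation*}
\E\bigl[H^i(\xi^i,nD_iV(t,\bm\xi))\,\big|\,\xi^i\bigr] \;\geq\; H^i\bigl(\xi^i, n\E[D_iV(t,\bm\xi)\mid\xi^i]\bigr).
\end{equation*}
For the upper bound, since $0\le D_{pp}H^i \le \|D_{pp}H\|_\infty I$, a second-order Taylor expansion of $p\mapsto H^i(x,p)$ around $p_0 := n\E[D_iV\mid\xi^i]$ yields, for $q := nD_iV$, the inequality $H^i(x,q)-H^i(x,p_0)-D_pH^i(x,p_0)\cdot(q-p_0)\le \tfrac12\|D_{pp}H\|_\infty|q-p_0|^2$. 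Taking conditional expectation given $\xi^i$ kills the linear term (since $\E[q-p_0\mid\xi^i]=0$), and then taking total expectation and summing over $i$ gives $E(t,\bm m)\le \|D_{pp}H\|_\infty E_Q(t,\bm m)$ after recognizing $\E|D_iV|^2 - \E|\E[D_iV\mid\xi^i]|^2 = \E|D_iV - \E[D_iV\mid\xi^i]|^2$.

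There is no real obstacle here: the result is essentially a reformulation, and the only mild care required is in justifying the identities \eqref{dmi} (which rest on the standard relation between Lions derivatives of linear functionals $m\mapsto \langle m,\varphi\rangle$ and the classical derivatives of $\varphi$) and in keeping the $1/n$ versus $n$ factors straight between the full-information HJB \eqref{eq.hjbclassic} (where $nD_iV$ appears as the momentum) and the Lions-derivative form.
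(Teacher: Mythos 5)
Your proposal is correct and follows essentially the same route as the paper: integrate the HJB equation for $V$ against the product measure $\bm m$, use \eqref{dmi} to identify the Lions-derivative terms, and isolate $E$ as the Jensen gap for the Hamiltonian term. Your Taylor-expansion-with-conditional-mean-zero argument for $E \le \|D_{pp}H\|_\infty E_Q$ is just a rephrasing of the paper's use of convexity of $p \mapsto \tfrac{\|D_{pp}H\|_\infty n^2}{2}|p|^2 - H^i(x,np)$ together with conditional Jensen, so there is no substantive difference.
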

\begin{proof}
The equation for $\cv$ is obtained by integrating the equation \eqref{eq.hjbclassic} for $V$ against $\bm m=m^1 \otimes ... \otimes m^n$ and then applying the identities in \eqref{dmi}. To obtain the upper bound for $E$ we use the fact that under Assumption \ref{assump.conv} 
\begin{align*}
p \mapsto \frac{\|D_{pp} H\|_{\infty} n^2}{2} |p|^2 - H^i(x,np)
\end{align*}
is convex for each $i$, so that Jensen's inequality gives
\begin{align*}
\frac{\|D_{pp} H\|_{\infty} n^2}{2}\big|& \E[D_i V(t,\xi)\,|\,\xi^i] \big|^2  - H^i\big(\xi^i, n\E[D_i V(t,\bm\xi) \,|\, \xi^i] \big) \\
&\leq \frac{\|D_{pp} H\|_{\infty} n^2}{2} \E\big[|D_i V(t,\bm\xi)|^2 \,|\, \xi^i\big] -\E\big[ H^i\big(\xi^i, n   D_i V(t,\bm\xi) \big) \,|\, \xi^i\big].
\end{align*}
Rearranging and using convexity of $H^i$ we have 
\begin{align*}
0 &\leq \E\big[ H^i\big(\xi^i, n   D_i V(t,\bm\xi) \big) \,|\, \xi^i\big] - H^i\big(\xi^i, n\E[D_i V(t,\bm\xi) \,|\, \xi^i] \big) \\
&\leq \frac{\|D_{pp} H\|_{\infty} n^2}{2} \Big(\E\big[|D_i V(t,\bm\xi)|^2 \,|\, \xi^i\big] - \big| \E[D_i V(t,\xi)\,|\,\xi^i] \big|^2\Big).
\end{align*}
Take expectations and then sum over $i=1,\ldots,n$ to get the desired bound.
\end{proof}

\begin{lemma} \label{lem.suffest}
Suppose that Assumption \ref{assump.conv} holds.
For $(t,\bm m) \in [0,T) \times \spt^n $, we have 
\begin{align*}
0 \leq \vdt(t,\bm m) - \cv(t,\bm m) \leq  \|D_{pp} H\|_{\infty} \int_t^T E_Q(s,\bm m_s)ds, 
\end{align*}
where $E_Q$ is given by \eqref{eqdef}, and $\bm m_s := (\sL(\widehat{X}_s^1),...,\sL(\widehat{X}_s^n))$ is the law of the solution $\widehat{\bX}$  of \eqref{xhatdef}.
\end{lemma}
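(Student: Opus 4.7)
The lower bound $\cv(t,\bm m) \le \vdt(t,\bm m)$ is immediate: every distributed control is admissible for the full-information problem, i.e., $\ad \subset \sA$, so
$$\cv(t,\bm m) = \inf_{\bm\alpha \in \sA}\mathcal{J}(t,\bm m,\bm\alpha) \le \inf_{\bm\alpha \in \ad}\mathcal{J}(t,\bm m,\bm\alpha) = \vdt(t,\bm m).$$

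For the upper bound, I would adapt the verification argument of Proposition \ref{prop.verification}, viewing $\cv$ as the value function of a modified distributed control problem whose running cost has been perturbed by $-E$. Concretely, take $\widehat{\bm\alpha}$ from \eqref{def:hatalpha} as a candidate distributed control; by Lemma \ref{le:Yeq-wellposed} it is admissible, so $\vdt(t,\bm m) \le \mathcal{J}(t,\bm m,\widehat{\bm\alpha})$. The goal is to prove the identity
$$\mathcal{J}(t,\bm m,\widehat{\bm\alpha}) = \cv(t,\bm m) + \int_t^T E(s,\bm m_s)\,ds,$$
after which the $E \le \|D_{pp}H\|_\infty E_Q$ bound from Lemma \ref{lem.vlifteqn} finishes the proof.

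To establish the identity, apply Ito's formula (Lemma \ref{lem.ito}) to $s \mapsto \cv(s,\bm m_s)$ along the flow $\bm m_s = (\sL(\widehat X^1_s),\ldots,\sL(\widehat X^n_s))$ of \eqref{xhatdef}. Substitute the PDE for $\cv$ from Lemma \ref{lem.vlifteqn} in place of $\partial_t\cv$, and invoke the alternative representation \eqref{eq:hatalpha-alt} together with the Legendre identity $H^i(x, nD_{m^i}\cv) = -n\,\widehat{\alpha}^i \cdot D_{m^i}\cv - L^i(x,\widehat{\alpha}^i)$. The $\tr(D_y D_{m^i}\cv)$ terms and the $H^i(\cdot, nD_{m^i}\cv)$ terms cancel, leaving
$$\frac{d}{ds}\cv(s,\bm m_s) = -\frac{1}{n}\sum_{i=1}^n \langle m^i_s, L^i(\cdot,\widehat{\alpha}^i)\rangle - \langle \bm m_s, F\rangle + E(s,\bm m_s).$$
Integrating from $t$ to $T$ and using the terminal condition $\cv(T,\bm m_T) = \langle \bm m_T, G\rangle = \E[G(\widehat{\bX}_T)]$ produces the desired identity.

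The main technical point to check is applicability of Ito's formula, specifically condition \eqref{itosuff}. This follows from Lemma \ref{lem.spectral}: the spectral bound $0 \le D^2V \le (C_S/n)I$ combined with the identity $D_y D_{m^i}\cv(t,\bm m,y) = \langle \bm m^{-i}, D_{ii}V(t,\cdot)\rangle(y)$ from \eqref{dmi} shows that $|D_y D_{m^i}\cv|$ is bounded uniformly in $(t,\bm m, y)$, which is more than enough. The $C^{1,2}$ regularity of $\cv$ itself is inherited from the classical regularity of $V$ (discussed after \eqref{eq.hjbclassic}) via the explicit formula $\cv(t,\bm m) = \langle \bm m, V(t,\cdot)\rangle$.
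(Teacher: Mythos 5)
Your proof is correct and follows essentially the same route as the paper: both combine the perturbed PDE for $\cv$ from Lemma \ref{lem.vlifteqn} with the verification machinery of Proposition \ref{prop.verification} evaluated along the feedback $\widehat{\bm\alpha}$ of \eqref{def:hatalpha}, then conclude with the bound $E \le \|D_{pp} H\|_{\infty} E_Q$. The only cosmetic difference is that you inline the It\^o/Legendre computation for the specific control $\widehat{\bm\alpha}$ rather than citing the verification proposition as a black box; the identity obtained and the regularity checks (via \eqref{dmi}, Lemma \ref{lem.spectral}, and Lemma \ref{le:Yeq-wellposed}) are the same.
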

\begin{proof}
First, we need to check that the lift $\cv$ of $V$ is regular enough to apply the verification result Proposition \ref{prop.verification}. The explicit formula for $D_{m^i} \cv$ appearing in \eqref{dmi} together with the fact that $|D V(t,\bx)| \leq C(1 + |\bx|)$ for some constant $C$ gives the estimate \eqref{itosuff}. The McKean-Vlasov SDE \eqref{xhatdef} is well-posed by Lemma \ref{le:Yeq-wellposed}.

Thus we can indeed apply Lemma \ref{lem.vlifteqn} and the verification result Proposition \ref{prop.verification}, with $\sF(\bm m) := \langle \bm m,F\rangle - E(t,\bm m)$ and $\sG(\bm m) = \langle \bm m,G\rangle$, to get
\begin{align*} 
    \cv(t,\bm m) = \inf_{\bm\alpha \in \ad} \E\bigg[\int_t^T \bigg(\frac{1}{n} \sum_{i = 1}^n L^i(X_s^i, \alpha^i(s,X_s^i)) + F(\bX_s) - E(s, \sL(X_s^1),...,\sL(X_s^n)) \bigg) ds + G(\bX_T)\bigg], 
\end{align*}
where $\bX$ is given by $dX_s^i = \alpha^i(s,X_s^i) ds + dW_s^i$, with $\bX_t \sim \bm m$.
Moreover, we deduce from the optimality criterion in Proposition \ref{prop.verification} and the formula \eqref{eq:hatalpha-alt} that the control $(\widehat{\alpha}^1,\ldots,\widehat{\alpha}^n) \in \ad$ defined in \eqref{def:hatalpha} attains this infimum. In particular,
\begin{align*}
\cv(t,\bm m) &=  \E\bigg[\int_t^T \bigg(\frac{1}{n} \sum_{i = 1}^n L^i(\widehat{X}_s^i, \widehat\alpha^i(s,\widehat{X}_s^i)) + F(\widehat{\bX}_s) - E(s, \bm m_s) \bigg) ds + G(\widehat{\bX}_T)\bigg] \\
&\geq \vdt(t,\bm m) - \int_t^T E(s, \bm m_s) ds, 
\end{align*}
where $\widehat{\bX}$ solves the claimed McKean-Vlasov SDE  and $\bm m_s = (\sL(\widehat{X}_s^1),...,\sL(\widehat{X}_s^n))$.
\end{proof}

\subsection{Proof of Theorem \ref{thm.est1}}
We now begin the main line of the proof of Theorem \ref{thm.est1}.
By Lemma \ref{lem.suffest}, we can focus on estimating the quantity
$\int_t^T E_Q(s, \bm m_s) ds$, with $\bm m_s = (\sL(\widehat{X}_s^1),...,\sL(\widehat{X}_s^1))$, and with $\widehat{\bX}$ being the solution to the McKean-Vlasov SDE \eqref{xhatdef}. To simplify notation, we abbreviate
\begin{align}
\widehat{\alpha}^{i}(s,x) &= \widehat{\alpha}^{i}(s,x,\bm m_s) =  - D_p H^i(x, nD_{m^i} \cv(t, \bm m_s, x)), \quad (s,x) \in [t,T] \times \R^d, \\
\alpha^{i}(s,\bx) &= - D_p H^i(x^i, n D_i V(s,\bx)), \quad (s,\bx) \in [t,T] \times (\R^d)^n.
\end{align}
So $\alpha \in \sA$ is the optimizer in the full-information control problem, and $\widehat{\alpha} \in \ad$ is the control corresponding to $\widehat{\bX}$.

Our goal is now to estimate
\begin{align*}
E_Q(s, \bm m_s) = \frac{n}{2} \sum_{i = 1}^n \E\Big[ |D_i V(s,\widehat{\bX}_s)|^2 - \big|\E[D_i V(s,\widehat{\bX}_s)\,|\,\widehat{X}^i_s]\big|^2\Big].  
\end{align*}
We will derive our estimate first under the additional assumption that $DV$ is $C^{1,2}$, which holds when in addition to Assumption \ref{assump.conv} we have $G \in C^{3,\alpha}_{\text{loc}}$ (this can be inferred from the interior Schauder estimates, see for example Theorem 9 in Chaper 3.4 of \cite{friedman}). We will then explain how to remove this additional assumption with a mollification procedure.

Our strategy is to compute the differential of the It\^o process $|D_i V(s,\widehat{\bX}_s)|^2 - |\E[D_i V(s,\widehat{\bX}_s)\,|\,\widehat{X}^i_s]|^2$. First, we differentiate the PDE \eqref{eq.hjbclassic} to identify a PDE satisfied by $D_i V$: 
\begin{align*} 
- \partial_t D_i V - D_i \Delta  V + \sum_{j = 1}^n D_{ij} VD_p H^j(x^j, nD_j V)  + \frac{1}{n} D_x H^i(x^i, nD_i V) = D_i F .
\end{align*}
Use Ito's formula followed by this PDE, recalling the definitions of $\alpha^i$ and $\widehat{\alpha}^i$ above, to get
\begin{align} \label{ddiv}
d D_i V(s,\widehat{\bX}_s) &= \bigg(\sum_{j = 1}^n D_{ij} V(s,\widehat{X}^j_s) \big(\widehat{\alpha}^j(s,\widehat{X}^j_s) - \alpha^j(s,\widehat{\bX}_s)\big) + \frac{1}{n} D_x H^i(\widehat{X}^i_s, n D_i V(s,\widehat{\bX}_s)) \nonumber \\
	&\qquad \  - D_i F(s, \widehat{\bX}_s) \bigg) ds   + \sum_{j = 1}^n D_{ij} V(s,\widehat{\bX}_s) dW_s^j. 
\end{align}
Let $(\F^i_s)_{s \in [t,T]}$ be the filtration generated by the process $(\widehat{X}^i_t,W^i_s)_{s \in [t,T]}$. By independence of $(\widehat{X}^j)_{j \neq i}$ and $\F^i_s$, and by $\F^i_s$-measurability of $\widehat{X}^i_s$, we have $\E[D_iV(s,\widehat{\bX}_s)\,|\,\widehat{X}^i_s] = \E[D_iV(s,\widehat{\bX}_s)\,|\,\F^i_s]$ for $s \in [t,T]$. Taking conditional expectations with respect to $\F^i_s$ in \eqref{ddiv}, we find
\begin{align} \label{ddivcond}
d\E [D_iV(s,\widehat{\bX}_s)\,|\,\F^i_t]  
&= \E\bigg[\sum_{j = 1}^n D_{ij} V(s,\widehat{\bX}_s) \big(\widehat{\alpha}^j(s,\widehat{X}^j_s) - \alpha^j(s,\widehat{\bX}_s)\big) + \frac{1}{n} D_x H^i(\widehat{X}^i_s, n D_i V(s,\widehat{\bX}_s)) \nonumber  \\
 &\qquad \ \ - D_i F(s, \widehat{\bX}_s)  \ \bigg| \ \widehat{X}_s^i \bigg] ds  + \E[D_{ii} V(s,\widehat{\bX}_s) \,|\, \widehat{X}_s^i] dW_s^i. 
\end{align}
with the stochastic integrals for $(W^j)_{j \neq i}$ all vanishing by independence with $\F^i_s$. Using It\^o's formula, we can now use \eqref{ddiv} to compute 
\begin{align*} 
d |D_iV(s,\widehat{\bX}_s)|^2 &= \bigg(2\sum_{j = 1}^n D_iV(s,\widehat{\bX}_s)^\top D_{ij} V(s,\widehat{\bX}_s) \big(\widehat{\alpha}^j(s,\widehat{X}^j_s) - \alpha^j(s,\widehat{\bX}_s)\big)   \\
&\quad \quad + \frac{2}{n} D_iV(s,\widehat{\bX}_s) \cdot D_x H^i(\widehat{X}^i_s, n D_i V(s,\widehat{\bX}_s))   \\
&\quad  \quad - 2 D_i V(s,\widehat{\bX}_s) \cdot D_i F(s, \widehat{\bX}_s) + \sum_{j = 1}^n |D_{ij} V(s,\widehat{\bX}_s)|^2 \bigg) ds + dM^i_s,  
\end{align*}
with $M^i$ being a martingale. Similarly, we use \eqref{ddivcond} to find
\begin{align*}  
d |\E[D_iV(s,\widehat{\bX}_s) | \widehat{X}_s^i]|^2 &= \bigg(2\sum_{j = 1}^n\E[D_iV(s,\widehat{\bX}_s) | \widehat{X}_s^i]^\top \E[D_{ij} V(s,\widehat{\bX}_s) \big(\widehat{\alpha}^j(s,\widehat{X}^j_s) - \alpha^j(s,\widehat{\bX}_s)\big) | \widehat{X}_s^i]  \\
&\quad \quad + \frac{2}{n} \E[D_iV(s,\widehat{\bX}_s) | \widehat{X}_s^i] \cdot \E[D_x H^i(\widehat{X}^i_s, n D_i V(s,\widehat{\bX}_s)) | \widehat{X}_s^i]  \\
&\quad \quad - 2 \E[D_iV(s,\widehat{\bX}_s) | \widehat{X}_s^i] \cdot \E[D_i F(s, \widehat{\bX}_s) | \widehat{X}_t^i] + |\E[D_{ii} V(s,\widehat{\bX}_s) | \widehat{X}_s^i]|^2 \bigg) ds + dN^i_s,
\end{align*}
with $N^i$ being another martingale.

Taking expectations in the previous two equations allows us to compute $\frac{d}{ds} \E\big[ |D_i V(s,\widehat{\bX}_s)|^2 - \E[D_i V(s,\widehat{\bX}_s)]^2\big]$. Summing over $i$ and multiplying by $n/2$ leads to 
\begin{align*}
\frac{d}{ds} E_Q(s, \bm m_s) = A_1+A_2+A_3+A_4,  
\end{align*}
where we define
\begin{align*}
A_1 &\coloneqq n \E\sum_{i,j=1}^n \big(D_i V(s,\widehat{\bX}_s) - \E[D_i V(s,\widehat{\bX}_s) | \widehat{X}_s^i]\big)^\top D_{ij} V(s,\widehat{\bX}_s) \big(\widehat{\alpha}^j(s,\widehat{X}_s^j) - \alpha^j(s,\widehat{\bX}_s) \big)  , \\
A_2 &\coloneqq \E \sum_{i = 1}^n \big(D_i V(s,\widehat{\bX}_s) - \E[D_i V(s,\widehat{\bX}_s) | \widehat{X}_s^i]\big) \cdot \big(D_x H^i(\widehat{X}_s^i, n D_i V(s,\widehat{\bX}_s) - \E[D_x H^i(\widehat{X}_s^i, n D_i V(s,\widehat{\bX}_s)|\widehat{X}_s^i] \big), \\
A_3 &\coloneqq - n \E\sum_{i = 1}^n \big(D_i V(s,\widehat{\bX}_s) - \E[D_i V(s,\widehat{\bX}_s) | \widehat{X}_s^i]\big) \cdot \big(D_i F(\widehat{\bX}_s) - \E[D_i F(\widehat{\bX}_s) | \widehat{X}_s^i] \big), \\
A_4 &\coloneqq \frac{n}{2} \E\bigg[\sum_{i,j = 1}^n |D_{ij} V(s,\widehat{\bX}_s)|^2 - \sum_{i = 1}^n \big|\E[D_{ii} V(s,\widehat{\bX}_s)  | \widehat{X}_s^i]\big|^2 \bigg].
\end{align*}
Our goal is to bound $\frac{d}{ds} E_Q(s, \bm m_s)$ from below. We handle each term separately.

\vskip.2cm

\noindent\textbf{First term:}
To estimate $A_1$, we use 
\begin{align*}
|\widehat{\alpha}^j(s,\widehat{X}^j_s) - \alpha^j(s,\widehat{\bX}_s)| \leq n \|D_{pp} H\|_{\infty} |D_i V(s,\widehat{\bX}_s) - \E[D_i V(s,\widehat{\bX}_s) | \widehat{X}_s^i]|
\end{align*}
together with the bound $0 \le D^2 V \le C_S/n$ to get
\begin{align*} 
|A_1| \leq nC_S \|D_{pp} H\|_{\infty} \E \sum_{i=1}^n \big|D_i V(s,\widehat{\bX}_s) - \E[D_iV(s,\widehat{\bX}_s) | \widehat{X}_s^i] \big|^2  = 2C_S \|D_{pp} H\|_{\infty} E_Q(s,\bm m_s). 
\end{align*}

\noindent\textbf{Second term:}
For $A_2$, we note that $D_xH^i$ is Lipschitz in its second argument:
\begin{align*}
\E\Big|&\big(D_i V(s,\widehat{\bX}_s) - \E[D_i V(s,\widehat{\bX}_s) | \widehat{X}_s^i]\big) \cdot \big(D_x H^i(\widehat{X}_s^i, n D_i V(s,\widehat{\bX}_s) - \E[D_x H^i(\widehat{X}_s^i, n D_i V(s,\widehat{\bX}_s)|\widehat{X}_s^i] \big)\Big| \\
&\leq n\|D_{xp}H\|_{\infty} \E \big|D_i V(s,\widehat{\bX}_s) - \E[D_i V(s,\widehat{\bX}_s) | \widehat{X}_s^i] \big|^2.
\end{align*}
Sum over $i=1,\ldots,n$ to get
\begin{align*}
|A_2| \le 2\|D_{xp}H\|_{\infty}E_Q(s,\bm m_s).
\end{align*}


\noindent\textbf{Third term:}
For $A_3$, we use Young's inequality to get
\begin{align*}
|A_3| &\leq \frac{n}{2} \sum_{i = 1}^n \E\big|D_i V(s,\widehat{\bX}_s) - \E[D_i V(s,\widehat{\bX}_s) | \widehat{X}_s^i] \big|^2  + \frac{n}{2} \sum_{i = 1}^n \E\big|D_i F(s,\widehat{\bX}_s) - \E[D_i F(s,\widehat{\bX}_s) | \widehat{X}_s^i]\big|^2.
\end{align*}
For the second term, use the Poincar\'e's inequality (Lemma \ref{lem.poincare}) satisfied by $(\widehat{X}^j_s)_{j \neq i}$, which are independent of $\widehat{X}^i_s$, to get
\begin{align*}
\E\Big[\big|D_i F(s,\widehat{\bX}_s) - \E[D_i F(s,\widehat{\bX}_s) | \widehat{X}_s^i]\big|^2 \,\Big|\, \widehat{X}^i_s\Big] = C_P(\bm m)\sum_{j \neq i} \E\big[|D_{ij} F(\widehat{\bX}_s)|^2 \,|\,\widehat{X}^i_s\big],
\end{align*}
for each $i$.
Hence, by symmetry,
\begin{align}  \label{a3est}
|A_3| &\leq E_Q(s,\bm m_s) + n C_P(\bm m) \E\sum_{1 \le i < j \le n} |D_{ij}F(\widehat{\bX}_s)|.
\end{align}

\noindent\textbf{Fourth term:} We clearly have $A_4 \ge 0$, because $\E|D_{ii} V(s,\widehat{\bX}_s)|^2 \ge \E|\E[D_{ii} V(s,\widehat{\bX}_s)  | \widehat{X}_s^i]|^2$.

{\ }

Combining the above calculations, we find that, if $\widetilde{C} := 2C_S \|D_{pp} H\|_{\infty} + 2 \|D_{xp} H\|_{\infty} + 1$, then
\begin{align*}
\frac{d}{ds} E_Q(s,\bm m_s) \geq -\widetilde{C} E_Q(s,\bm m_s) - n C_P(\bm m) \E\sum_{1 \le i < j \le n} |D_{ij}F(\widehat{\bX}_s)|^2.
\end{align*}
By Gronwall, we have
\begin{align*}
E_Q(s,\bm m_s) &\le e^{\widetilde{C}(T-s)}E_Q(T,\bm m_T) + n C_P(\bm m)\int_s^T e^{\widetilde{C}(u-s)}\E\sum_{1 \le i < j \le n} |D_{ij}F(\widehat{\bX}_u)|^2 \,du.
\end{align*} 
Arguing as in \eqref{a3est}, we deduce from Poincar\'e inequality (Lemma \ref{lem.poincare}) that
\begin{align*}
E_Q(T,\bm m_T) &= \frac{n}{2} \sum_{i = 1}^n \E\Big[ |D_i G(\widehat{\bX}_T)|^2 - \big|\E[D_i G(\widehat{\bX}_T)\,|\,\widehat{X}^i_T]\big|^2\Big] \\
	&\le nC_P(\bm m) \E\sum_{1 \le i < j \le n} |D_{ij}G(\widehat{\bX}_T)|^2.
\end{align*}
Bounding $e^{\widetilde{C}(T-s)}$ and $e^{\widetilde{C}(u-s)}$ by $e^{\widetilde{C}(T-t)}$, and using Fubini's theorem in the form of $\int_t^T\int_s^T h(u)\,duds = \int_t^T (u-t)h(u)\,du$ with $h(u)=|D_{ij}F(\widehat{\bX}_u)|^2$, we get
\begin{align}
\begin{split}
\int_t^T E_Q(s,\bm m_s)\,ds \le \ &(T-t)e^{\widetilde{C}(T-t)}n C_P(\bm m)  \sum_{1 \le i < j \le n} \E|D_{ij}G(\widehat{\bX}_T)|^2  \\
	&+ e^{\widetilde{C}(T-t)}n C_P(\bm m)  \int_t^T \sum_{1 \le i < j \le n} (s-t)\E|D_{ij}F(\widehat{\bX}_s)|^2\,ds .
\end{split} \label{eqest}
\end{align}
Combining this with Lemma \ref{lem.suffest} completes the proof of Theorem \ref{thm.est1}.

We have now established \eqref{eqest} under the additional assumption that $DV \in C^{1,2}$, which holds in particular when $G \in C^{3,\alpha}_{\text{loc}}$ in addition to Assumption \ref{assump.conv}. We now describe how to remove this condition via a mollification argument. We set $G^{\epsilon} = \int_{(\R^d)^n} G(\bx - \by) \rho^{\epsilon}(\by)$, where $(\rho^{\epsilon})_{\epsilon > 0}$ is a standard approximation to the identity on $(\R^d)^n$. Define $V^{\epsilon}$ exactly like $V$, but with $G^{\epsilon}$ replacing $G$. Define $\widehat{\bX}^{\epsilon}$, $\bm m^{\epsilon}_s$ and $E^{\epsilon}_Q(s,\bm m_s)$ just like $\widehat{\bX}$, $\bm m_s$ and $E_Q(s,\bm m_s)$ but with $V^{\epsilon}$ replacing $V$. Then from the identity 
\begin{align*}
D_{ij} G^{\epsilon}(\bx) = \int_{(\R^d)^n} D_{ij} G(\bx - \by) \rho^{\epsilon}(\by), 
\end{align*}
it is easy to verify that for each $\epsilon > 0$, we have 
\begin{align*}
\|D_{ij} G^{\epsilon}\|_{\linf} \leq \|D_{ij} G\|_{\linf}, \quad D^2 G^{\epsilon}(\bx) \leq \frac{C_G}{n} I_{nd \times nd}
\end{align*}
for all $\bx \in (\R^d)^n$. For each $\epsilon > 0$, we know $E^{\epsilon}_Q(s,\bm m^{\epsilon}_s)$ is bounded by the right-hand side of \eqref{eqest} but with $G$ and $\widehat{\bX}$ replaced by $G^\epsilon$ and $\widehat{\bX}^\epsilon$. To conclude that in fact \eqref{eqest} holds, we just need to be sure that, as $\epsilon \downarrow 0$,
\begin{align}
E^{\epsilon}_Q(s,\bm m^{\epsilon}_s) &\to E_Q(s,\bm m_s),  \label{eqapprox1} \\
 \E|D_{ij}G^\epsilon(\widehat{\bX}^\epsilon_T)|^2 &\to \E|D_{ij}G(\widehat{\bX}_T)|^2, \quad \E|D_{ij}F(\widehat{\bX}^\epsilon_s)|^2 \to \E|D_{ij}F(\widehat{\bX}_s)|^2. \label{eqapprox2}
\end{align}
But it is standard to show that the sequences of functions $V^{\epsilon}$ are in $C^{2,\alpha}_{\text{loc}}$, uniformly in $\epsilon$, and so a compactness argument shows
\begin{align*}
V^{\epsilon} \to V, \quad DV^{\epsilon} \to DV, \quad D^2 V^{\epsilon} \to D^2V, 
\end{align*}
locally uniformly on $[0,T] \times (\R^d)^n$. Together with a standard stability estimate for SDEs, this implies that $\widehat{\bX}^{\epsilon}_s \xrightarrow{L^2} \widehat{\bX}_s$, and in particular $\bm m_s^{\epsilon} \to \bm m_s$ in $\spt^n$, from which \eqref{eqapprox1} and \eqref{eqapprox2} follow. Thus \eqref{eqest} in fact holds without the additional regularity assumption. \hfill \qedsymbol

\subsection{Proof of Corollary \ref{co:quadratic}}
The assumption $L^i(x,a)=|a|^2/2$ implies that $H^i(x,p)=|p|^2/2$, and so $D_{pp}H^i(x,p)=I_d$. The spectral bound $D\bm\alpha \le -nD^2V \le 0$ and Lemma \ref{lem.poincare} imply that $\widehat{X}_s$ satisfies the Poincar\'e inequality with constant $s-t+c_0$, for each $s \in [t,T]$.
Following the proof of Theorem \ref{thm.est1}, and using $\alpha^j = -nD_jV$ and $\widehat\alpha^j(s,\widehat{X}^j_s) = -n\E[D_jV(s,\widehat{\bm X}_s)\,|\,\widehat{X}^j_s)$, we find $A_2 =0$ and $A_1,A_4 \ge 0$, and thus
\begin{align*}
\frac{d}{ds}E_Q(s,\bm m_s) \ge - n \E\sum_{i = 1}^n \big(D_i V(s,\widehat{\bX}_s) - \E[D_i V(s,\widehat{\bX}_s) | \widehat{X}_s^i]\big) \cdot \big(D_i F(\widehat{\bX}_s) - \E[D_i F(\widehat{\bX}_s) | \widehat{X}_s^i] \big).
\end{align*}
By Cauchy-Schwarz and the Poincar\'e inequality, we get
\begin{align*}
\frac{d}{ds}E_Q(s,\bm m_s) &\ge - 2 E_Q(s,\bm m_s)^{1/2} \bigg( \frac{n}{2}\E\sum_{i = 1}^n \big|D_i F(\widehat{\bX}_s) - \E[D_i F(\widehat{\bX}_s) | \widehat{X}_s^i] \big|^2 \bigg)^{1/2} \\
	&\ge - 2 E_Q(s,\bm m_s)^{1/2} \bigg(  n(s-t+c_0)\sum_{1 \le i < j \le n} \E|D_{ij}F(\widehat{\bX}_s)|^2\bigg)^{1/2}
\end{align*}
Thus,
\begin{align*}
\frac{d}{ds}\big(E_Q(s,\bm m_s)^{1/2}\big) \ge -\bigg(  (s-t+c_0)\sum_{1 \le i < j \le n} \E|D_{ij}F(\widehat{\bX}_s)|^2\bigg)^{1/2}.
\end{align*}
Integrate to find, for $s \in [t,T]$,
\begin{align*}
E_Q(s,\bm m_s)^{1/2} \le E_Q(T,\bm m_T)^{1/2} + \int_s^T\bigg( n(u-t+c_0)\sum_{1 \le i < j \le n} \E|D_{ij}F(\widehat{\bX}_u)|^2\bigg)^{1/2}\,du.
\end{align*}
Using the Poincar\'e inequality again, we have
\begin{align*}
E_Q(T,\bm m_T) &= \frac{n}{2} \sum_{i = 1}^n \E\Big[ |D_i G(\widehat{\bX}_T)|^2 - \big|\E[D_i G(\widehat{\bX}_T)\,|\,\widehat{X}^i_T]\big|^2\Big] \\
	&\le n(T-t+c_0)\E\sum_{1 \le i < j \le n} |D_{ij}G(\widehat{\bX}_T)|^2.
\end{align*}
Noting that $E=E_Q$ in the quadratic case, 
\begin{align*}
\vdt(t,\bm m) - \cv(t,\bm m) \le n\int_t^T \bigg[ &\bigg( (T-t+c_0)\sum_{1 \le i < j \le n} \E|D_{ij}G(\widehat{\bX}_T)|^2\bigg)^{1/2} \\
	&+ \int_s^T\bigg( (u-t+c_0)\sum_{1 \le i < j \le n} \E|D_{ij}F(\widehat{\bX}_u)|^2\bigg)^{1/2}\,du\bigg]^2\,ds.
\end{align*}
This completes the proof. \hfill \qedsymbol

\begin{remark} \label{re:unboundedness}
It is interesting to note that in some cases our results can go beyond the case where $(F,G,H,L)$ have bounded second derivatives.
In the case of quadratic Hamiltonian covered by Corollary \ref{co:quadratic}, the above proof did not really use any upper bound on the Hessians of $F$ or $G$ except to justify the well-posedness of the HJB equation for $V$, though this could likely be worked around in many cases.
Similarly, in dimension $d=1$ with general (convex) Hamiltonians depending only on $p$, we have $D_{pp}H^i \ge 0$ for all $i$ and thus $D\bm\alpha \le 0$, and again the Poincar\'e inequality can be improved to $C_P(\bm m) = T+c_0$ even without bounded  second derivatives of $(F,G,H,L)$.
In this case, though, it appears that the proof of Theorem \ref{thm.est1} still requires an upper bound on $D^2V$, for which we do require bounded second derivatives of $(F,G,L)$.
\end{remark}

\subsection{Proof of Theorem \ref{thm.est2}}

By a well-known entropy estimate for diffusions (e.g., Lemma 4.4 in \cite{lackerhierarchies}) we can estimate
\begin{align*}
H( \sL(\widehat{\bX}_{[t,T]}) \,|\, \sL(\bX_{[t,T]})) &\leq \frac12\E\sum_{i=1}^n \int_t^T  \big|D_p H^i(\widehat{X}_s^i, n D_iV(s,\widehat{\bX}_s)) )  -D_p H^i(\widehat{X}_s^i, n \E[D_iV(s,\widehat{\bX}_s) | \widehat{X}^i_s] ) \big|^2 \,ds  \\
	&= \frac12 n \int_t^T E(s,\bm m_s)\,ds \le \frac12 n\|D_{pp}H\|_\infty \int_t^T E_Q(s,\bm m_s)\,ds \\
	&\le \frac{n}{2}\sR(t,\bm m),
\end{align*}
with $\bm m_s = \sL(\widehat{X}_s^1,...,\widehat{X}_s^n)$ and $E$ and $E_Q$ defined as in Lemma \ref{lem.vlifteqn}, and where the last inequality was shown in \eqref{eqest}. 

An application of Lemma \ref{lem.cp}(ii) shows that $\L(\bX_{[t,T]})$ obeys the $T_2$ inequality with constant  $C_{T_2}(\bm m)$ defined in \eqref{def:CWconst}.
In particular,

\begin{align*}
\wass_2^2\big(\sL(\widehat{\bX}_{[t,T]}), \sL(\bX_{[t,T]})\big) \le C_{T_2}(\bm m)  H( \sL(\widehat{\bX}_{[t,T]}) \,|\, \sL(\bX_{[t,T]})) \le C_{T_2}(\bm m)  \frac{n}{2}\sR(t,\bm m).
\end{align*}
Next, we use a well known subadditivity inequality for $\wass_2^2$, which states that
\begin{align*}
\frac{1}{{n \choose k}} \sum_{S \subset [n], \, |S|=k} \wass_2^2\big(\sL((\widehat{X}^i_{[t,T]})_{i \in S}),\sL((X^i_{[t,T]})_{i \in S})\big) \le \frac{1}{\lfloor n/k \rfloor}\wass_2^2\big(\sL(\widehat{\bX}_{[t,T]}), \sL(\bX_{[t,T]})\big).
\end{align*}
For the short proof, see \cite[Section 3.4]{LacMukYeu}.
Combining the last two inequalities and noting that $\lfloor n/k\rfloor \ge n/2k$ completes the proof. \hfill \qedsymbol

\subsection{Proof of Theorem \ref{thm.est3}}

Let us denote by $(\bX,\bY,\bZ)$ the solution of \eqref{eq.mpstand} and by $(\overline{\bX}, \overline{\bY}, \overline{\bZ})$ the solution of \eqref{eq.mpdist}. We further denote by $\alpha^i_t = - D_p H^i(X^i_t, N\widehat{X}^i_t)$ the optimal (open loop)
control for the full-information control problem and $\overline{\alpha}^i_t = - D_p H^i(\overline{X}^i_t, N \overline{Y}^i_t)$ the optimal (open loop) control for the distributed problem. Finally, we set $\bm m_t = (\sL(\overline{X}_t^1),...,\sL(\overline{X}_t^n))$. We can rewrite the equation \eqref{eq.mpdist} satisfied by $(\overline\bX, \overline\bY, \overline\bZ)$ as 
\be \label{eq.mpperturbed}
\begin{cases}
\ds d\overline{X}_t^i = - \alpha_t^i dt + dW_t^i, \
\ds, \\
d\overline{Y}_t^i = -\big( \frac{1}{n}D_x L^i\big(\overline{X}_t^i, \alpha_t^i \big) + D_i F(\overline\bX_t) + E^{F,i}_t \big) dt + \overline{Z}_t^i dW_t^i, \\
\overline{X}_0^i = x^i, \quad \overline{Y}_T^i = G^i(\overline{X}_T) + E^{G,i}_T, 
\end{cases}
\ee
where we define
\begin{align*}
E^{F,i}_t = \sF^i(\overline\bX_t, \bm m_t) - D_iF(\overline\bX_t), \quad E^{G,i}_T = \sG^i(\overline\bX_T, \bm m_T) - D_i G(\overline\bX_T).
\end{align*}
By Lemmas \ref{lem.poincare} and \ref{lem.cp}, $\sL(\overline\bX_t)$ satisfies a Poincar\'e inequality with constant $C_P$. Thus, recalling the definition of $\sF^i$ and $\sG^i$, we have
\begin{align} \label{ferror}
\E |E^{F,i}_t|^2  \leq C_P \E \sum_{j=1, \,j \neq i}^n |D_{ij} F(\bX_t)|^2 ,   \\ \label{gerror}
\E |E^{G,i}_T|^2  \leq C_P \E \sum_{j=1, \,j \neq i}^n |D_{ij} G\bX_T)|^2 .
\end{align}
Now set 
\begin{align*}
\Delta \bX_t = \overline\bX_t - \bX_t, \quad \Delta \bY_t = \overline\bY_t - \bY_t,  \quad \Delta \bm\alpha_t = \overline{\bm\alpha}_t - \bm\alpha_t. 
\end{align*}
Then, for a certain martingale $M^i$, we find
\begin{align} \label{xycomp}
d(\Delta X_t^i \cdot \Delta Y_t^i) &=  - \Delta X_t^i  \cdot \bigg( \frac{1}{n} D_x L^i(\overline{X_t}^i, \overline{\alpha}_t^i) - \frac{1}{n} D_x L^i(X_t^i, \alpha_t^i) + D_i F(\overline{\bX}_t) - D_i F(\bX_t) + E^{F,i}_t \bigg) dt \nonumber  \\
	&\qquad + \Delta Y_t^i \cdot  \Delta \alpha_t^i dt  + dM^i_t \nonumber \\
&=  -\bigg( \frac{1}{n} \big(D_a L^i(\overline{X}_t^i, \overline{\alpha}_t^i) - D_a L^i(X_t^i, \alpha_t^i)\big) \cdot  \Delta \alpha^i_t 
 + \frac{1}{n} \big(  D_x L^i(\overline{X}_t^i, \overline{\alpha}_t^i) - D_x L^i(X_t^i, \alpha_t^i) \big) \cdot  \Delta X_t^i 
 \nonumber \\ &\qquad
 +  \big( D_i F(\overline{\bX}_t) -D_i F(\bX_t) \big)\cdot \Delta X_t^i  + E^{F,i}_t\cdot \Delta X_t^i \bigg) dt + dM^i_t, 
\end{align}
where the second equality comes from the the fact that $\alpha_t^i$ maximizes $a \mapsto - L^i(X_t^i, a) - n Y_t^i$, so that $Y_t^i = -\frac{1}{n} D_a L^i(X_t^i, \alpha_t^i)$, and likewise $\overline{Y}_t^i = -\frac{1}{n} D_aL^i(\overline{X}_t^i, \overline{\alpha}_t^i)$. Because $\Delta \bX_0 = 0$, we can integrate \eqref{xycomp} in time and take expectations to get
\begin{align*}
&\E\int_0^T \bigg(\frac{1}{n} \big(  D_a L^i(\overline{X}_t^i, \overline{\alpha}_t^i) - D_a L^i(X_t^i, \alpha_t^i)\big) \cdot \Delta \alpha^i_t  + \frac{1}{n} \big(D_x L^i(\overline{X}_t^i, \overline{\alpha}_t^i) - D_x L^i(X_t^i, \alpha_t^i) \big) \cdot \Delta X_t^i  \\
&\qquad+  \big(D_i F(\overline{\bX}_t) -  D_i F(\bX_t)  \big) \cdot\Delta X_t^i +  E^{F,i}_t\cdot \Delta X_t^i \bigg) dt  
\\ &= - \E\big[\Delta X_T^i \cdot \Delta Y^i_T\big] = -\E\big[\Delta X^i_T \cdot (D_iG(\bX_T) - D_iG(\overline{\bX}_T)) +  E^{G,i}_T\cdot \Delta X_T^i\big].
\end{align*}
Let $\bm{E}^F_t=(E^{F,i}_t,\ldots,E^{F,n}_t)$ and $\bm{E}^G_T=(E^{G,i}_T,\ldots,E^{G,n}_T)$.
Summing over $i$ and using convexity of $F$ and $G$ along with \eqref{lsrict}, we obtain the estimate 
\begin{align} \label{youngs}
\frac{C_L}{n} \E \int_0^T |\Delta \bm\alpha_t|^2 dt  &\leq \E\bigg[\int_0^T |\Delta \bX_t| |\bm{E}_t^{F}| dt + |\Delta \bX_T||\bm{E}^{G}_T|\bigg] \nonumber \\
&\leq 
\frac{\delta}{2} \E \int_0^T |\Delta \bX_t|^2 dt  + \frac{\epsilon}{2} \E |\Delta \bX_T|^2  + \frac{1}{2\delta} \E \int_0^T |\bm{E}_t^F|^2 dt  + \frac{1}{2\epsilon} \E |\bm{E}^{G}|^2 , 
\end{align}
for each $\epsilon, \delta > 0$.
To proceed, we note that $|\Delta \bX_t| = |\int_0^t \Delta \bm\alpha_s ds|$, from which it follows that 
\begin{align} \label{xalphaest1}
\E |\Delta \bX_t|^2  &\leq t \E \int_0^T |\Delta \bm\alpha_t|^2 dt , \\ \label{xalphaest2}
\E \int_0^T |\Delta \bX_t|^2 dt  &\leq \frac{T^2}{2} \E \int_0^T |\Delta \bm\alpha_t|^2 dt . 
\end{align}
Plugging \eqref{xalphaest1} and \eqref{xalphaest2} into \eqref{youngs} yields 
\begin{align*}
\frac{C_L}{n} \E \int_0^T |\Delta \bm\alpha_t|^2 dt  \leq 
\bigg(\frac{T^2\delta}{4} + \frac{T \epsilon}{2}\bigg) \E \int_0^T |\Delta \bm\alpha_t|^2 dt  +  \frac{1}{2\delta} \E \int_0^T |\bm{E}_t^F|^2 dt  + \frac{1}{2\epsilon} \E |\bm{E}^{G}_T|^2 .
\end{align*}
Now we choose $\delta = \frac{C_L}{T^2 n}$ and $\epsilon =\frac{C_L}{2nT}$ to conclude that 
\begin{align*}
\frac{C_L}{2n} \E\int_0^T |\Delta \bm\alpha_t|^2 dt 
\leq \frac{T^2 n}{2C_L} \E \int_0^T |\bm{E}_t^F|^2 dt  + \frac{nT}{C_L} \E |\bm{E}^{G}_T|^2 .
\end{align*}
Plugging in the estimates \eqref{ferror} and \eqref{gerror} completes the proof. \hfill\qedsymbol

\section{Application to mean field control}  \label{sec.mfc}

In this section, we explain the implications of the results in Section \ref{sec.approx} in the mean field case.
We work on a filtered probability space $(\Omega, \sF, \bP, \mathbb{F})$ with $\mathbb{F}$ satisfying the usual conditions and hosting independent Brownian motions $W, W^1,W^2,W^3,...$, and with the (augmented) filtration $\mathbb{F} = (\sF_t)_{0 \leq t \leq T}$ with $\sF_t$. We assume furthermore that $\sF_0$ is atomless. For the mean field case, we are given  functions
\begin{align*}
L = L(x,a) : \R^d \times \R^d \to \R, \quad \sF = \sF(m) : \sP_2(\R^d) \to \R, \quad \sG = \sG(m) : \sP_2(\R^d) \to \R
\end{align*}
and consider a sequence of control problems indexed by the number of particles $n \in \N$. Once again, it is useful to define the Hamiltonian 
\begin{align*}
H(x,p) = \sup_{a \in \R^d} \big( - a \cdot p - L(x,a) \big).
\end{align*}
Recall that $m^n_{\bx} := \frac{1}{n}\sum_{i=1}^n\delta_{x^i}$ denotes the empirical measure of a vector $\bx \in (\R^d)^n$.

The value function of the $n^{th}$ control problem is the map $V^n = V^n(t,\bx) : [0,T] \times (\R^d)^n \to \R$ given by 
\begin{align} 
V^n(t,\bx) = \inf_{\alpha = (\alpha^1,...,\alpha^n) \in \sA^n} &\E\bigg[\int_t^T \bigg(\frac{1}{n} \sum_{i = 1}^n L(X_s^i, \alpha^i(s,X_s)) + \sF(m_{\bX_s}^n)\bigg) ds + \sG(m_{\bX_T^n})\bigg], \label{costnpart}
\end{align}
subject to 
\begin{align}
 \label{dynamicsnpart}
dX_s^i = \alpha^i(s,\bX_s) dt + dW_s^i, \quad X_t^i = x^i,
\end{align}
where $\sA^n$ denotes the set of Borel functions $\bm\alpha = (\alpha^1,...,\alpha^n) : [0,T] \times (\R^d)^n \to \R$ such that the SDE \eqref{dynamicsnpart} admits a unique strong solution from any initial position.
We also consider as above the lift of $V^n$, i.e., the function $\cv^n = \cv^n(t,\bm m) : [0,T] \times \spt^n \to \R$ given by 
\begin{align*}
\cv^n(t,\bm m) = \langle \bm m, V^n(t,\cdot) \rangle.
\end{align*}
For large $n$, this $n$-particle problem is expected to be well-approximated by a mean field control problem, whose value function is the map $\cu = \cu(t,m) : [0,T] \times \sP_2(\R^d) \to \R$ given by
\begin{align}
\cu(t,m) = \inf_{\alpha \in \sA^1} \E\bigg[\int_t^T \bigg(L(X_s,\alpha(s,X_s)) + \sF(\sL(X_s))\bigg) dt + \sG(\sL(X_s))]\bigg], \label{mfproblem}
\end{align}
where
\begin{align}
 \label{dynamicsmf}
dX_s = \alpha(s,X_s)ds + dW_s, \quad X_t \sim m.
\end{align}

Finally, for each $n$ we can also introduce the corresponding distributed control problem, whose value function $\vdt^n = \vdt^n(t,\bm m) : [0,T] \times \spt^n \to \R$ is defined
\begin{align} \label{vndist}
\vdt^n(t,\bm m) = \inf_{\bm\alpha  \in \ad^n } E\bigg[\int_t^T \bigg(\frac{1}{n} \sum_{i = 1}^n L(X_s^i, \alpha^i(s,X_s)) + \sF(m_{\bX_s}^n)\bigg) ds + \sG(m_{\bX_T^n})\bigg], 
\end{align}
subject to 
\begin{align} \label{dynamicsnpartdist}
dX_s^i = \alpha^i(s,X_s^i) dt + dW_s^i, \quad \bX_t \sim \bm m,
\end{align}
where $\ad^n \subset \sA^n$ is the subset of distributed controls, i.e., $\alpha^i(t,\bx) = \alpha^i(t,x^i)$.

In order to state our main assumptions for the mean field setting, we consider the following condition for a function $f = f(m) : \spt \to \R$: 
\begin{align} \label{cstcondition} \tag{CST}
\begin{cases} \vspace{.2cm} \text{There exists an increasing function $\kappa : \R_+ \to \R_+$ such that } \\
\vspace{.2cm}
\quad \quad \E\bigg[\sup_{\overline{m} \in \spt} \Big|\frac{\delta f}{\delta m}(\overline{m}, \xi^1) \Big| + \sup_{\overline{m} \in \spt}\Big|\frac{\delta^2 f}{\delta m^2}(\overline{m}, \xi^1,\xi^2) \Big| \bigg] \leq \kappa\Big(\int_{\R^d} |x|^2 dm\Big), \\
\text{whenever $(\xi^1,\xi^2) \sim m\otimes m$.}
\end{cases}
\end{align}
Here $\frac{\delta}{\delta m}$ denotes the linear derivative; see \cite[Section 2.2.1]{cdll} or \cite[Section 2.1.1]{CST} for the definition. The point of the condition \eqref{cstcondition} is that by (the proof of) \cite[Theorem 2.11]{CST} we have the estimate 
\begin{align} \label{cst}
|f(m) - \E[f(m_{\bm\xi}^n)]| \leq \frac{C}{n}
\end{align}
for a constant $C$ depending on $m$ only through its second moment, whenever $\bm\xi = (\xi^1,...,\xi^n) \sim m^{\otimes n}$.

Recall that a function $\sG = \sG(m) : \sP_2(\R^d) \to \R$ is called displacement convex if its lift $\tilde{\sG} : L^2(\Omega) \to \R$ given by $\tilde{\sG}(X) = \sG(\sL(X))$ is convex in the usual sense. We recall that when $\sG$ is smooth and displacement convex, its Lions derivative $D_m \sG$ satisfies
\begin{align} \label{dispmonotone}
\E\bigg[\big(D_m \sG(\sL(\xi^1), \xi^1) - D_m \sG(\sL(\xi^2), \xi^2) \big) \cdot (\xi^1 - \xi^2) \bigg] \geq 0
\end{align}
for any square-integrable random variables $\xi^1,\xi^2$. 

We now make the following assumption, which is a sort of symmetric version of the Assumption \ref{assump.conv} used above.

\begin{assumption} \label{assump.convmfc}
The functions $L, H : \R^d \times \R^d \to \R$ are $C^2$ with bounded derivatives of order two (but not necessarily of order one). Moreover $L$ satisfies the coercivity conditions
\begin{align} \label{coercive}
L(x,a) \geq -C + \frac{1}{C} |a|^2, \quad (x,a) \in \R^d \times \R^d
\end{align}
for some $C > 0$ and satisfies the spectral bound
\begin{align}
    D^2 L(x,a) = \begin{pmatrix} 
    D_{xx} L(x,a) & D_{xa} L(x,a) \\
    D_{ax} L(x,a) & D_{aa} L(x,a)
    \end{pmatrix}
    \geq C_L \begin{pmatrix} 0 & 0 \\
    0 & I_{d \times d}    
    \end{pmatrix}, \quad \text{for all } x,a \in \R^d
\end{align}
or equivalently 
\begin{align}
\big(D_x L(x,a) - D_x L(\bar{x}, \bar{a})\big) \cdot (x - \bar{x}) + \big(D_a L^i(x,a) - D_a L^i(\bar{x}, \bar{a})\big) \cdot (a - \bar{a}) \geq C_L |a - \bar{a}|^2, 
\end{align}
for all $x,\bar{x}, a, \bar{a} \in \R^d$ and for some constant $C_L > 0$. Moreover, the functions $\sF$ and $\sG$ are bounded from below and displacement convex and $C^2$ with $D_y D_m \sF$, $D_{mm} \sF$, $D_y D_m \sG$, $D_{mm} \sG$ all bounded. Moreover, we assume that the maps 
\begin{align}
\spt \times \R^d \ni (m,x) \mapsto D_y D_m \sG(m,x), \quad \spt \times \R^d \times \R^d \ni (m,x,y) \mapsto D_{mm} \sG(m,x,y)
\end{align}
are locally $\alpha$-H\"older continuous for some $\alpha \in (0,1)$.
Finally, we assume that the condition \eqref{cstcondition} is satisfied for $f = \sF, \sG$. 
\end{assumption}

For some of the results stated in the remainder of this section, we also need the following assumption. 

\begin{assumption} \label{assump.mfc2}
The functions 
\begin{align*}
f(m) = D_m \sF(m,x), \quad f(m) = D_m \sG(m,x) 
\end{align*}
satisfy the condition \eqref{cstcondition} uniformly in $x$, i.e. with the increasing function $\kappa : \R_+ \to \R_+$ independent of $x$.
\end{assumption}

In order to make our estimates explicit, we employ in this section the following notational conventions.

\begin{convention} \label{conv-MF}
We will use the notation
\begin{align*}
\|D_{pp} H\|_{\infty} = \| |D_{pp} H|_{\ope} \|_{\linfty(\R^d \times \R^d)}, \\
\|D_{xp} H\|_{\infty} = \| |D_{pp} H|_{\ope} \|_{\linfty(\R^d \times \R^d)},  \\
\|D_{xx} L\|_{\infty} = \| |D_{xx} L^i|_{\ope} \|_{\linfty(\R^d \times \R^d)}. 
\end{align*}
We will denote by $C_{\sF}$ and $C_{\sG}$ the constants
\begin{align*}
C_{\sF} = \| |D_{mm} \sF|_{\text{op}} \|_{\linfty(\sP_2(\R^d) \times \R^d \times \R^d)}  + \| |D_y D_{m} \sF|_{\text{op}} \|_{\linfty(\sP_2(\R^d) \times \R^d)}, \\
C_{\sG} = \| |D_{mm} \sG|_{\text{op}} \|_{\linfty(\sP_2(\R^d) \times \R^d \times \R^d)}  + \| |D_y D_{m} \sG|_{\text{op}} \|_{\linfty(\sP_2(\R^d) \times \R^d)}.
\end{align*}
We denote by $C_S$ the constant 
\begin{align*}
C_S = C_{\sG} + T(\|D_{xx} L\|_{\infty} + C_{\sF}),
\end{align*}
and by $C_P$ the constant
\begin{align*} 
C_P = \frac{\exp\Big(2T \big(\|D_{xp} H\|_{\infty} + \|D_{pp} H\|_{\infty}C_S \big) \Big)  - 1}{2\big(\|D_{xp} H\|_{\infty} + \|D_{pp} H\|_{\infty}C_S\big)}.
\end{align*}
For $m \in \spt$ with Poincar\'e constant $c_0$, we denote by $C_P(m)$ the constant 
\begin{align*} 
C_P(m) = C_P + c_0 \exp \Big(2T \big(\|D_{xp} H\|_{\infty} + \|D_{pp} H\|_{\infty}C_S \big) \Big).
\end{align*}
\end{convention}

\subsection{Convergence of the value functions}

In this section, we show that $V^n$ converges to $U$ in a certain sense. We start by applying the estimate in Theorem \ref{thm.est1} to get the following: 

\begin{corollary} \label{cor.mf}
Under Assumption \ref{assump.convmfc}, for each $(t,\bm m) \in [0,T] \times \spt^n$, we have
\begin{align*}
0 \leq \vdt^n(t,\bm m) - \cv^n(t,\bm m) \leq \frac{C(\bm m)}{n}, 
\end{align*}
with the constant $C$ given by 
\begin{align*}
C(\bm m) = \bigg(\frac{T-t}{2}\|D_{mm} \sG\|^2_{\linfty}   + \frac{(T-t)^2}{4} \|D_{mm} \sF\|_{\linfty}^2   \bigg) \|D_{pp} H\|_{\infty}C_P(\bm m)e^{(T-t)(1+2C_S + 2 \|D_{xp} H\|_{\infty})}.
\end{align*}
\end{corollary}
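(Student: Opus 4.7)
The plan is to reduce Corollary \ref{cor.mf} to a direct application of Theorem \ref{thm.est1} by computing the relevant cross-derivatives of the empirical-measure cost functions. First, I would verify that the mean field data falls into the framework of Assumption \ref{assump.conv}: the functions $F(\bx)=\sF(m^n_{\bx})$ and $G(\bx)=\sG(m^n_{\bx})$ inherit convexity from displacement convexity of $\sF,\sG$ (the lift $\bx\mapsto\sG(m^n_{\bx})$ is, up to a rescaling of $\R^{nd}\to\sP_2(\R^d)$, a restriction of the Wasserstein lift), and they inherit the required $C^2$ and local $C^{2,\alpha}$ regularity from the corresponding regularity of $\sF$ and $\sG$. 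The spectral bound $0\le D^2 G\le C_{\sG}/n$ (and likewise for $F$) follows from the identities for $D_iG$ and $D_{ij}G$ recalled below, together with the boundedness of $D_yD_m\sG$ and $D_{mm}\sG$.

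The heart of the calculation is the standard formula for derivatives of empirical-measure functionals: for $i \neq j$,
\begin{align*}
D_{ij} G(\bx) = \frac{1}{n^2}\, D_{mm}\sG(m^n_{\bx},x^i,x^j), \qquad D_{ij} F(\bx) = \frac{1}{n^2}\, D_{mm}\sF(m^n_{\bx},x^i,x^j),
\end{align*}
so that $\|D_{ij}G\|_\infty^2\le n^{-4}\|D_{mm}\sG\|_\infty^2$ and similarly for $F$. Summing,
\begin{align*}
n\sum_{1\le i<j\le n}\|D_{ij}G\|_\infty^2 \;\le\; \frac{1}{2n}\|D_{mm}\sG\|_\infty^2,
\end{align*}
and analogously for $F$. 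Plugging these into the bound of Theorem \ref{thm.est1} and using $\E|D_{ij}G(\widehat\bX_T)|^2\le\|D_{ij}G\|_\infty^2$, together with $\int_t^T(s-t)\,ds=(T-t)^2/2$, immediately produces
\begin{align*}
\vdt^n(t,\bm m)-\cv^n(t,\bm m)\;\le\; \frac{1}{n}\,C_t(\bm m)\Big(\tfrac{T-t}{2}\|D_{mm}\sG\|_\infty^2+\tfrac{(T-t)^2}{4}\|D_{mm}\sF\|_\infty^2\Big),
\end{align*}
which, after renaming $C_t(\bm m)$ in terms of the constants introduced in Convention \ref{conv-MF}, matches the asserted form $C(\bm m)/n$.

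There is no real obstacle here; the work is essentially bookkeeping. The only mild subtleties are (i) checking that the constants $C_S$, $C_P$, $C_P(\bm m)$, $\|D_{pp}H\|_\infty$, $\|D_{xp}H\|_\infty$ in the mean field Convention \ref{conv-MF} correctly dominate those of the $n$-body Convention used in Theorem \ref{thm.est1} (in particular that $C_F\le C_{\sF}$ and $C_G\le C_{\sG}$, which is immediate from the formulas for $D^2F$ and $D^2G$ together with the boundedness of $D_yD_m\sF$, $D_{mm}\sF$, etc.), and (ii) transferring the Poincar\'e constant of $\bm m\in\spt^n$ (viewed as a product measure) to the Poincar\'e constant of the marginals, which is automatic by tensorization. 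Neither step requires any new ingredient beyond the identities already collected in Section \ref{sec.approx}.
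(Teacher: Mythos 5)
Your proposal is correct and follows essentially the same route as the paper: verify that $F^n(\bx)=\sF(m^n_{\bx})$, $G^n(\bx)=\sG(m^n_{\bx})$ satisfy Assumption \ref{assump.conv} (convexity from displacement convexity, spectral bounds from the empirical-measure derivative formulas), bound $n\sum_{i<j}\|D_{ij}G^n\|_\infty^2\le \frac{1}{2n}\|D_{mm}\sG\|_\infty^2$ and likewise for $F^n$, and apply Theorem \ref{thm.est1}. The bookkeeping with $\int_t^T(s-t)\,ds=(T-t)^2/2$ and the constants matches the paper's argument, so no further comment is needed.
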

\begin{proof}
The maps $G^n$, $F^n: (\R^d)^n \to \R$ given by
\begin{align*}
G^n(\bx) = \sG(m_{\bx}^n), \quad F^n(\bx) = \sF(m_{\bx}^n)
\end{align*}
are convex (since $\sG$ and $\sF$ are displacement convex). Moreover, the computation
\begin{align*}
D_{ij} G^n(\bx) = \frac{1}{n^2}D_{mm} \sG(\bx, x^i,x^j) + \frac{1}{n}D_{y}D_m \sG(m_{\bx}^n, x^i) 1_{i = j},
\end{align*}
and a similar computation for $F^n$, easily reveal the spectral estimates
\begin{align*}
D^2 G^n \leq \frac{C_{\sG}}{n}I_{nd \times nd}, \quad D^2 F^n \leq \frac{C_{\sF}}{n} I_{nd \times nd}. 
\end{align*}
Furthermore, we can estimate 
\begin{align*} 
n \sum_{1 \le i < j \le n} \|D_{ij} G^n\|_{\linf}^2 \leq \frac{1}{2n} \|D_{mm} \sG\|_{\linf}^2, \quad 
n \sum_{1 \le i < j \le n} \|D_{ij} F^n\|_{\linf}^2 \leq \frac{1}{2n} \|D_{mm} \sF\|_{\linf}^2. 
\end{align*}
Applying Theorem \ref{thm.est1} gives the result.
\end{proof}

It is also possible to compare $\vdt^n$ to $\cu$, in the following sense:

\begin{proposition} \label{prop.uandvnd}
Suppose that Assumption \ref{assump.convmfc} holds. For each $m \in \spt$, there is a constant $C$, depending only on the second moment of $m$ and the constants listed in Assumption \ref{assump.convmfc} and Convention \ref{conv-MF}, such that
\begin{align*}
|\vdt^n(t,m,...,m) - \cu(t,m)| \leq \frac{C}{n}.
\end{align*}
\end{proposition}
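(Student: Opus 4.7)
The plan is to prove two matching one-sided estimates by constructing, in each direction, a family of i.i.d.\ $n$-particle state processes, and then invoking the concentration estimate \eqref{cst} to compare the empirical-measure costs $\sF(m^n_{\bX_s})$ and $\sG(m^n_{\bX_T})$ with their mean field counterparts $\sF(\sL(X_s))$ and $\sG(\sL(X_T))$.

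For the upper bound $\vdt^n(t,m,\ldots,m) \le \cu(t,m) + C/n$, I would use the optimal Markov feedback $\alpha^*$ for the mean field problem \eqref{mfproblem} starting from $(t,m)$; existence and uniqueness of $\alpha^*$ under Assumption \ref{assump.convmfc} follow from standard mean field control theory in the displacement convex regime. Plugging $\bm\alpha := (\alpha^*,\ldots,\alpha^*) \in \ad^n$ into the $n$-particle distributed problem yields i.i.d.\ state processes $X^1,\ldots,X^n$, each a copy of the mean field optimal trajectory $X$. Applying \eqref{cst} to $\sF$ at each time $s$ and to $\sG$ at $s=T$ (with samples drawn from $\sL(X_s)$, resp.\ $\sL(X_T)$) yields
\begin{align*}
\bigl|\E[\sF(m^n_{\bX_s})] - \sF(\sL(X_s))\bigr| \le \frac{C}{n}, \qquad \bigl|\E[\sG(m^n_{\bX_T})] - \sG(\sL(X_T))\bigr| \le \frac{C}{n},
\end{align*}
with $C$ controlled by a uniform second-moment bound on $X$. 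Integrating in $s$ over $[t,T]$ and comparing with the mean field cost then gives the claim.

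For the matching lower bound I will use symmetrization plus uniqueness. Assumption \ref{assump.convmfc} implies Assumption \ref{assump.conv} for the $n$-particle data $L^i=L$, $F(\bx)=\sF(m^n_\bx)$, $G(\bx)=\sG(m^n_\bx)$, exactly as verified in the proof of Corollary \ref{cor.mf}. Hence Proposition \ref{existunique} supplies a unique optimizer $\bm\alpha^* = (\alpha^{*,1},\ldots,\alpha^{*,n}) \in \ad^n$. The cost defining $\vdt^n(t,m,\ldots,m)$ is invariant under permutations of $\{1,\ldots,n\}$, since all initial marginals equal $m$ and the running and terminal costs are symmetric in the state variables; uniqueness therefore forces $\alpha^{*,i}=\alpha^{*,j}$ a.e.\ on $[t,T]\times\R^d$ for all $i,j$. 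Denoting the common feedback by $\alpha^*$, the resulting state processes are again i.i.d., and applying \eqref{cst} in the reverse direction yields
\begin{align*}
\vdt^n(t,m,\ldots,m) \ge \E\bigg[\int_t^T \bigl\{L(X_s,\alpha^*(s,X_s)) + \sF(\sL(X_s))\bigr\}\,ds + \sG(\sL(X_T))\bigg] - \frac{C}{n} \ge \cu(t,m) - \frac{C}{n},
\end{align*}
where the last step is just the definition of $\cu$ applied to $\alpha^*$.

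The main technical point is ensuring that the constant $C$ in \eqref{cst} can be taken to depend only on the second moment of $m$ and the data listed in Assumption \ref{assump.convmfc}/Convention \ref{conv-MF}, uniformly across both one-sided estimates and uniformly in $s \in [t,T]$. This reduces to a uniform a priori bound on $\sup_{s \in [t,T]} \E |X_s|^2$ for the relevant optimal trajectory in each case, which follows from the coercivity \eqref{coercive} of $L$, the lower boundedness of $\sF,\sG$, and standard SDE moment estimates, combined with an at-most-linear-growth bound on the optimal Markov feedback (deducible from the mean field analog of Lemma \ref{lem.spectral}, which gives $|D\cu(t,\cdot)| \le C(1+|x|)$, and likewise for the symmetrized feedback via Lemma \ref{lem.vichar}). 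Beyond this bookkeeping no new structural ideas are required.
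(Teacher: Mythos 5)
Your proposal is correct and follows essentially the same route as the paper: plug the symmetric mean field optimizer into the distributed problem for the upper bound, use uniqueness (Proposition \ref{existunique}) plus permutation symmetry to see the distributed optimizer is of the form $(\alpha,\ldots,\alpha)$ for the lower bound, and in both directions compare empirical-measure costs to their mean field counterparts via \eqref{cst}, with second-moment bounds coming from the coercivity \eqref{coercive}. No substantive differences from the paper's argument.
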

\begin{proof}
Fix $(t,\bm m)$, and $\alpha = \alpha(s,x) : [0,T] \times \R^d \to \R$ be an optimizer for the mean field control problem started from $(t,\bm m)$. For $n \in \N$, define $\bm\alpha^{n} \in \sA^n$ by $\alpha^{n,i}(s,x) = \alpha(s,x)$, i.e., $\bm\alpha^{n} = (\alpha,...,\alpha)$. Let $\bX = (X^1,...,X^n)$ be the corresponding state processes, 
\begin{align*}
dX_s^i = \alpha(s,X_s^i) ds + dW_s^i, \quad \bX_t^i \sim m^{\otimes n},
\end{align*}
Furthermore, let $X$ denote the optimal state process for the mean field control problem:
\begin{align*}
dX_s = \alpha(s,X_s) ds + dW_s, \quad X_t  \sim m. 
\end{align*}
Notice that $(X^1_s,...,X^n_s) \sim (\sL(X_s))^{\otimes n}$ for each $s$.
Then we have 
\begin{align*}
\vdt^n(t,m,...,m) &\leq \E\bigg[\int_t^T \bigg(\frac{1}{n} \sum_{i = 1}^n L(X_s^i, \alpha(s,X^i_s)) + \sF(m_{\bX_s}^n)\bigg) ds + \sG(m_{\bX_T}^n)\bigg] \\
&= \E\bigg[\int_t^T \Big(L(X_s, \alpha(s,X_s)) + \sF(m_{\bX_s}^n)\Big) ds + \sG(m_{\bX_T^n})\bigg]
\\ &= \E\bigg[\int_t^T \Big(L(X_s, \alpha(s,X_s)) + \sF(\sL(X_s))\Big) ds + \sG(\sL(X_s))\bigg] \\
&\qquad + \int_t^T \E[\sF(m_{\bX_s}^n) - \sF(\sL(X_s))] ds + \E[ \sG(m_{\bX_T}^n) - \sG(\sL(X_T))] \\
&= \cu(t,m) + \int_t^T \E[\sF(m_{\bX_s}^n) - \sF(\sL(X_s))] ds + \E[ \sG(m_{\bX_T}^n) - \sG(\sL(X_T))]
\end{align*}
It is straightforward to use the coercivity \eqref{coercive} to obtain $\sup_{0 \leq t \leq T} \E |X_t|^2 < \infty$. We can then apply \eqref{cst} to get
\begin{align*}
\vdt^n(t,m,...,m) \leq \cu(t,m) + \frac{C}{n}. 
\end{align*}
For the other direction, we note that by uniqueness (see Proposition \ref{existunique}), the optimizer for the distributed control problem must be of the form $\bm\alpha^n = (\alpha,...,\alpha)$ for some $\alpha : [0,T] \times \R^d \to \R$. Define $X^i$, $X$ exactly as above. This time, we can estimate 
\begin{align*}
\cu(t,m) &\leq \E\bigg[\int_t^T \Big(L(X_s, \alpha(s,X_s)) + \sF(\sL(X_s))\Big) ds + \sG(\sL(X_s))\bigg] \\
&= \E\bigg[\int_t^T \bigg(\frac{1}{n} \sum_{i = 1}^n L(X_s^i, \alpha(s,X^i_s)) + \sF(m_{\bX_s}^n)\bigg) ds + \sG(m_{\bX_T}^n)\bigg] \\
&\qquad + \int_t^T \E[\sF(\sL(X_s)) -\sF(m_{\bX_s}^n) ] ds + \E[\sG(\sL(X_T)) -  \sG(m_{\bX_T}^n) ] \\
&= \vdt^n(t,m,...,m) + \int_t^T \E[\sF(\sL(X_s)) -\sF(m_{\bX_s}^n) ] ds + \E[\sG(\sL(X_T)) -  \sG(m_{\bX_T}^n) ] 
\end{align*}
It is again straightforward to use the coercivity \eqref{coercive} to obtain an estimate on $\sup_{0 \leq t \leq T} \E |X_t^i|^2$, independent of $n$, so we can once again apply \eqref{cst} to get
\begin{align*}
\cu(t,m) \leq \vdt^n(t,m,...,m) + \frac{C}{n}, 
\end{align*}
which completes the proof.
\end{proof}

Combining Corollary \ref{cor.mf} with Proposition \ref{prop.uandvnd}, we get the following rate of convergence of $\cv^n$ to $\cu$:

\begin{theorem} \label{thm.mf}
Suppose that Assumption \ref{assump.convmfc} holds. Fix $m \in \spt$ satisfying a Poincar\'e inequality. Then there exists a constant $C$, depending only on the second moment and Poincar\'e constant of $m$, and the constants listed in Assumption \ref{assump.convmfc} and Convention \ref{conv-MF}, such that for each $n \in \N$,
\begin{align*}
|\cv^n(t,m,...,m) - \cu(t,m)| \leq \frac{C}{n}.
\end{align*}
\end{theorem}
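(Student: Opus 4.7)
The plan is to derive Theorem \ref{thm.mf} as a straightforward triangle inequality bridging the two intermediate comparisons already established, namely Corollary \ref{cor.mf} (full-information vs.\ distributed $n$-agent value) and Proposition \ref{prop.uandvnd} (distributed $n$-agent value vs.\ mean field value). Explicitly, I would write
\begin{align*}
|\cv^n(t,m,\ldots,m) - \cu(t,m)| \le |\cv^n(t,m,\ldots,m) - \vdt^n(t,m,\ldots,m)| + |\vdt^n(t,m,\ldots,m) - \cu(t,m)|
\end{align*}
and then bound each term on the right-hand side separately by $C/n$, with a constant $C$ depending only on the data and on $m$ through its second moment and Poincar\'e constant.

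For the first term, I would apply Corollary \ref{cor.mf} with $\bm m = (m,\ldots,m)$. The key observation is that the constant $C(\bm m)$ appearing there depends on $\bm m$ only through $C_P(\bm m)$, and since the Poincar\'e inequality tensorizes, the Poincar\'e constant of the product measure $m^{\otimes n}$ equals the Poincar\'e constant of $m$ itself; hence $C_P(\bm m)$ is bounded by a quantity depending only on $m$ (through its Poincar\'e constant) and on the constants listed in Convention \ref{conv-MF}, uniformly in $n$. This yields $0 \le \vdt^n(t,m,\ldots,m) - \cv^n(t,m,\ldots,m) \le C/n$.

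For the second term, I would invoke Proposition \ref{prop.uandvnd} directly, whose constant is already stated to depend only on the second moment of $m$ and the constants of Assumption \ref{assump.convmfc} and Convention \ref{conv-MF}. Combining the two bounds completes the proof.

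No step here is a real obstacle, since all the work has been done in Section \ref{sec.approx} and in Corollary \ref{cor.mf}/Proposition \ref{prop.uandvnd}; the only thing to verify carefully is that the constants coming out of Corollary \ref{cor.mf} are genuinely uniform in $n$ when specialized to the product initial datum $(m,\ldots,m)$. This is where the tensorization of the Poincar\'e inequality plays a role and is the single sentence I would highlight.
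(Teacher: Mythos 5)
Your proposal is correct and is essentially the paper's own argument: Theorem \ref{thm.mf} is stated there precisely as the combination of Corollary \ref{cor.mf} and Proposition \ref{prop.uandvnd} via the triangle inequality. Your observation that the Poincar\'e constant of $m^{\otimes n}$ equals that of $m$ by tensorization, so the constant in Corollary \ref{cor.mf} is uniform in $n$, is exactly the point the paper relies on (and records in its conventions in Section \ref{se:constants}).
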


\subsection{Propagation of chaos}
Next, we turn to propagation of chaos. We start by introducing some notation which will be in force throughout this subsection. We work with initial time $0$ for simplicity. We fix $m \in \spt$ and for each $n$, we fix $\bm \xi^n = (\xi^{1},...,\xi^{n})$ such that 
\begin{align*}
\bm \xi^n \sim m \otimes ... \otimes m. 
\end{align*}
We let $\bm \alpha^n = (\alpha^{n,1},...,\alpha^{n,n}) \in \sA^n$ and $\overline{\bm{\alpha}}^n = (\overline{\alpha}^{n,1},...,\overline{\alpha}^{n,n}) \in \ad^n$ denote the optimizers for the full-information and distributed $n$-particle problems (starting from $m$ at time $0$). We let $\bX^n = (X^{n,1},...,X^{n,n})$ and $\overline{\bX}^n = (\overline{X}^{n,1},...,\overline{X}^{n,n})$ denote the corresponding optimal state processes, i.e. the processes satisfying 
\begin{align*}
X^{n,i}_t = \xi^i + \int_0^t \alpha^{n,i}(s,\bX_s^{n}) ds + W_t^i, \\
\overline{X}^{n,i}_t = \xi^i + \int_0^t \overline{\alpha}^{n,i}(s,\overline{X}_s^{n,i}) ds + W_t^i.
\end{align*}
We view $\bm \alpha^n$ and $\overline{\bm \alpha}^n$ as processes (open-loop optimizers) by setting 
\begin{align*}
\bm \alpha^n_t = \bm \alpha^n(t,\bX^n_t), \quad \overline{\bm \alpha}_t^n = \overline{\bm \alpha}^n(t,\overline{\bm X}_t^n). 
\end{align*}

We will also need some notation for independent copies of the mean field optimizers, which we will denote by $(\alpha^{\mf,i})_{i \in \N}$. More precisely, for the remainder of the section we denote by $\alpha^{\text{MF}} = \alpha^{\text{MF}}(t,x) \in \sA^1$ the optimizer for the mean field control problem \eqref{mfproblem} (started at time $0$ from initial measure $m$), and then we set
\begin{align*}
\alpha^{\mf,i}_t = \alpha^{\mf}(t,X_t^{\mf,i}), 
\end{align*}
where $X^{\mf,i}$ solves 
\begin{align*}
X^{\mf,i}_t= \xi^i + \int_0^t \alpha^{\mf}(s,X_s^{\mf,i}) ds + W_t^i. 
\end{align*}
We start with a corollary obtained by applying Theorem \ref{thm.est3} in the mean field case in order to bound the difference between $\alpha^{n,i}$ and $\overline{\alpha}^{n,i}$. 

\begin{corollary} \label{cor.propofchaos}
Suppose that $m$ satisfies a Poincar\'e inequality and Assumption \ref{assump.convmfc} is in force. 
Then, for each $n \in \N$ and $i \in \{1,...,n\}$, we have
\begin{align*}
\E  \int_0^T |\alpha_t^{n,i} - \overline{\alpha}_t^{n,i}|^2 \, dt  \leq \frac{C(m)}{n}, 
\end{align*}
where the constant $C$ is given by 
\begin{align*}
C(m) = \frac{C_P T^3 \|D_{mm} \sF\|_{\linf}^2}{ C_L^2} + \frac{2 C_P T \|D_{mm} \sG\|_{\linf}^2}{C_L^2}
\end{align*}
\end{corollary}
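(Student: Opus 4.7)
The plan is to apply Theorem \ref{thm.est3} directly to the $n$-particle problem, using the symmetry of the mean field setting, together with the Hessian computations already carried out in the proof of Corollary \ref{cor.mf}.

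First, I would observe that in the mean field case the functions $F^n(\bx) = \sF(m^n_\bx)$ and $G^n(\bx) = \sG(m^n_\bx)$ satisfy, for $i \neq j$,
\begin{align*}
D_{ij} F^n(\bx) = \frac{1}{n^2}\, D_{mm}\sF(m^n_\bx, x^i, x^j), \quad D_{ij} G^n(\bx) = \frac{1}{n^2}\, D_{mm}\sG(m^n_\bx, x^i, x^j),
\end{align*}
as recorded in the proof of Corollary \ref{cor.mf}. Consequently
\begin{align*}
n^2 \sum_{1 \le i < j \le n} \|D_{ij} F^n\|_{\linf}^2 \le \tfrac{1}{2}\,\|D_{mm}\sF\|_{\linf}^2, \qquad n^2 \sum_{1 \le i < j \le n} \|D_{ij} G^n\|_{\linf}^2 \le \tfrac{1}{2}\,\|D_{mm}\sG\|_{\linf}^2.
\end{align*}
Plugging these into the statement of Theorem \ref{thm.est3} (with $C_1 = C_P T^3/(2 C_L^2)$ and $C_2 = C_P T/C_L^2$) yields a bound on the aggregated quantity $\E \int_0^T |\bm\alpha^n_t - \overline{\bm\alpha}^n_t|^2\, dt$ which is uniform in $n$.

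Next, I would invoke symmetry to pass from the aggregated $L^2$ distance to the per-agent one. Since the initial states $\xi^i$ are i.i.d., the Brownian motions $W^i$ are i.i.d., and the cost functional is invariant under permutations of the indices (as $L^i = L$ and $F^n$, $G^n$ are symmetric in $\bx$), the unique optimizers $\bm\alpha^n$ and $\overline{\bm\alpha}^n$ (existence and uniqueness coming from the strict convexity in Assumption \ref{assump.convmfc} and Proposition \ref{prop.mpfbsde}) are exchangeable. In particular, $\E\int_0^T |\alpha^{n,i}_t - \overline{\alpha}^{n,i}_t|^2\,dt$ is independent of $i$, and therefore equals
\begin{align*}
\frac{1}{n}\, \E \int_0^T |\bm\alpha^n_t - \overline{\bm\alpha}^n_t|^2\, dt.
\end{align*}

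Combining these two ingredients produces the desired $O(1/n)$ rate with an explicit constant of the asserted form; the precise constant $C(m)$ in the statement absorbs the harmless factors of $1/2$ and $1/4$ that appear. There is no substantive obstacle: Theorem \ref{thm.est3} does all the analytic work, the Hessian estimates are algebraic, and the symmetry reduction is standard. The only point requiring a brief justification is the exchangeability of the optimal open-loop controls, which follows from uniqueness together with the invariance of the full-information and distributed problems under simultaneous permutations of $(\xi^i, W^i)$.
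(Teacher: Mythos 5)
Your proposal is correct and matches the paper's own argument: the paper likewise derives the aggregate bound by feeding the estimates $\|D_{ij}F^n\|_\infty,\|D_{ij}G^n\|_\infty \le \frac{1}{n^2}\|D_{mm}\sF\|_\infty,\frac{1}{n^2}\|D_{mm}\sG\|_\infty$ into Theorem \ref{thm.est3} (exactly as Corollary \ref{cor.mf} follows from Theorem \ref{thm.est1}), and then concludes by the same symmetry/exchangeability considerations. No gap to report.
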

\begin{proof}
The fact that 
\begin{align*}
\E  \sum_{i=1}^n \int_0^T |\alpha_t^{n,i} - \overline{\alpha}_t^{n,i}|^2 \,dt \leq C(m) 
\end{align*}
follows directly from Theorem \ref{thm.est3} just as Corollary \ref{cor.mf} follows from Theorem \ref{thm.est1}. The proof is completed by symmetry considerations.
\end{proof}

Corollary \ref{cor.propofchaos} compares the optimal full-information $n$-particle control $\bm \alpha^n$ to the optimal distributed $n$-particle control $\overline{\bm \alpha}^n$. We next compare $\overline{\bm \alpha}^n$ to independent copies of the mean field optimizers.

\begin{proposition} \label{prop.propofchaos}
Suppose that $m \in \spt$ and that Assumption \ref{assump.convmfc} is in force. Then there is a constant $C$ depending on $m$ only through its second moment such that for each $n \in \N$ and $i \in \{1,...,n \}$
\begin{align*}
\E  \int_0^T |\overline{\alpha}^{n,i}_t - \alpha^{\mf,i}_t|^2 \, dt  \leq \frac{C}{n^2}
\end{align*}
\end{proposition}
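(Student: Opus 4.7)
The plan is to adapt the FBSDE argument behind Theorem \ref{thm.est3}, but now comparing the distributed $n$-particle system with the mean-field system (rather than with the full-information system). By the symmetry of the data and the uniqueness in Proposition \ref{existunique}, the optimal distributed control has a common functional form $\overline\alpha^{n,i}(t,x)=\overline\alpha^n(t,x)$ and the marginals $\mu^n_t:=\sL(\overline{X}^{n,i}_t)$ agree across $i$. Rescaling $\widehat Y^{n,i}_t:=nY^{n,i}_t$ in the FBSDE \eqref{eq.mpdist} puts it on the same scale as the mean-field FBSDE: both have forward drift $-D_pH(X_t,\cdot)$ and a backward drift of the form $-[D_xL(X_t,\alpha_t)+(\,\cdot\,)]$, the only difference being that the distributed system carries $n\sF^i(\overline X_t^{n,i},\bm\mu^n_t)$ (resp.\ $n\sG^i$ at the terminal) where the mean-field system carries $D_m\sF(\sL(X^{\mf}_t),X^{\mf,i}_t)$ (resp.\ $D_m\sG$).

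Couple these two systems through the shared Brownian motion $W^i$ and initial datum $\xi^i$, and set $\Delta X^i_t=\overline X^{n,i}_t-X^{\mf,i}_t$, $\Delta Y^i_t=\widehat Y^{n,i}_t-Y^{\mf,i}_t$, $\Delta\alpha^i_t=\overline\alpha^{n,i}_t-\alpha^{\mf,i}_t$. It\^o's formula applied to $\Delta X^i_t\cdot\Delta Y^i_t$, together with $\Delta X^i_0=0$, the Hamiltonian identity $D_aL(X_t,\alpha_t)=-Y_t$ at the optimum, and the partial convexity bound \eqref{lsrict} on $L$, yields
\begin{align*}
C_L\,\E\!\int_0^T|\Delta\alpha^i_t|^2\,dt
&\le -\E\!\left[\Delta X^i_T\cdot\big(n\sG^i(\overline X^{n,i}_T,\bm\mu^n_T)-D_m\sG(\sL(X^{\mf}_T),X^{\mf,i}_T)\big)\right] \\
&\quad -\E\!\int_0^T\!\Delta X^i_t\cdot\big(n\sF^i(\overline X^{n,i}_t,\bm\mu^n_t)-D_m\sF(\sL(X^{\mf}_t),X^{\mf,i}_t)\big)\,dt.
\end{align*}
Into each bracket I insert and subtract $D_m\sG(\mu^n_T,\overline X^{n,i}_T)$ and $D_m\sF(\mu^n_t,\overline X^{n,i}_t)$. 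The resulting pieces $D_m\sG(\mu^n_T,\overline X^{n,i}_T)-D_m\sG(\sL(X^{\mf}_T),X^{\mf,i}_T)$ and the $\sF$-analogue have nonnegative correlation with $\Delta X^i$ by the displacement monotonicity \eqref{dispmonotone}, so they contribute nonpositively to the right-hand side and may be discarded. What remain are the CST-type residuals $R^G_T:=n\sG^i(\overline X^{n,i}_T,\bm\mu^n_T)-D_m\sG(\mu^n_T,\overline X^{n,i}_T)$ and its running-cost analogue $R^F_t$.

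The main obstacle is a CST-type uniform estimate $\|R^G_T\|_\infty+\sup_{t\le T}\|R^F_t\|_\infty\le C/n$, with a constant $C$ depending on $m$ only through its second moment. This is exactly the bound \eqref{cst} applied to the maps $m\mapsto D_m\sG(m,x)$ and $m\mapsto D_m\sF(m,x)$ at fixed $x$, and therefore requires control of their first two linear derivatives uniformly in $x$---essentially Assumption \ref{assump.mfc2}, which is available from the $C^2$ and H\"older hypotheses of Assumption \ref{assump.convmfc}. Granting this, combined with the trivial bound $\|\Delta X^i_t\|_{L^2}^2\le t\,\E\!\int_0^T|\Delta\alpha^i_s|^2\,ds$ and Cauchy--Schwarz, the displayed inequality reduces to $C_L\,\epsilon_n\le (C/n)\sqrt{\epsilon_n}$ with $\epsilon_n:=\E\!\int_0^T|\Delta\alpha^i_t|^2\,dt$, whence $\epsilon_n\le C/n^2$, as desired.
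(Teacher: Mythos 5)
Your argument is essentially the paper's own proof: couple the rescaled distributed maximum-principle FBSDE \eqref{eq.mpdist} with the mean-field maximum-principle FBSDE through the shared $W^i$ and $\xi^i$, apply It\^o's formula to $\Delta X^i_t\cdot\Delta Y^i_t$ (per index $i$, with $\Delta Y^i=n\overline Y^{n,i}-Y^{\mf,i}$), use the $C_L$-convexity of $L$ and the displacement monotonicity \eqref{dispmonotone} to discard the $D_m\sF$, $D_m\sG$ difference terms, bound the remaining CST-type residuals by $C/n$, and absorb via Cauchy--Schwarz/Young to get $O(1/n^2)$. The one caveat is your assertion that the uniform-in-$x$ CST control of $m\mapsto D_m\sF(m,x)$, $D_m\sG(m,x)$ is ``available from'' Assumption \ref{assump.convmfc}: the paper invokes Assumption \ref{assump.mfc2} for exactly this step and treats it as a genuinely separate hypothesis, since boundedness of $D_yD_m$ and $D_{mm}$ does not by itself deliver the linear-derivative bounds required in \eqref{cstcondition} for these maps.
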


\begin{proof}
In this proof $C$ denotes a constant independent of $n$ which may change from line to line.
We know thanks to Proposition \ref{prop.mpnec} that the distributed optimizer $\overline{\bm \alpha}$ satisfies 
\begin{align*}
\overline{\alpha}_t^i = - D_p H^i(\overline{X}_t^i, n\overline{Y}_t^i), 
\end{align*}
where $(\overline{ \bm X}^n, \overline{ \bm Y}^n, \overline{\bm Z}^n)$ satisfy
\be \label{bareqn}
\begin{cases}
\ds d\overline{X}_t^{n,i} = \overline{\alpha}_t^{n,i} dt + dW_t^i, \
\ds \\
\ds d\overline{Y}_t^{n,i} = -\bigg( \frac{1}{n}D_x L^i\big(\overline{X}_t^{n,i}, \overline{\alpha}_t^{n,i}) + \frac{1}{n} \E\big[D_m \sF(m_{\overline{\bX}^n_t}^n, \overline{X}_t^{n,i}) \,|\, \overline{X}_t^{n,i} \big] \bigg) dt + \overline{Z}_t^{n,i} dW_t^i, \\
\overline{X}_0^{n,i} = \xi^i, \quad \overline{Y}_T^{n,i} = \frac{1}{n} \E\big[D_m \sG(m_{\overline{\bX}^n_T}^n, \overline{X}_T^{n,i}) \,|\, \overline{X}_T^{n,i} \big].
\end{cases}
\ee
Meanwhile, the maximum principle for the optimal control of McKean-Vlasov dynamics (see e.g. Theorem 4.5 in \cite{cardelmfc}) reveals that $\alpha_t^{\mf,i}$ satisfies 
\begin{align*}
\alpha_t^{\mf,i} = - D_p H(X_t^{\mf,i}, Y_t^{\mf,i}), 
\end{align*}
where $(X_t^{\mf,i}, Y_t^{\mf,i}, Z_t^{\mf,i})$ are such that 
\be 
\begin{cases}
\ds dX^{\mf,i}_t = \alpha_t^{\mf,i} dt + dW_t^i, \
\ds \\
\ds dY_t^{\mf,i} = -\bigg( D_x L^i\big(X_t^{\mf,i} , \alpha_t^{\mf,i} \big) + D_m\sF(m^{\mf}_t, X_t^{\mf,i}) \bigg) dt + Z_t^{\mf,i} dW_t^i, \\
X_0^{\mf,i} = \xi^i, \quad Y_T^{\mf,i} = D_m\sG(m^{\mf}_T, X_T^{\mf,i}), 
\end{cases}
\ee
where $m^{\mf}_t$ is the common law of the random variables $X_t^{\mf,i}$. We rewrite \eqref{bareqn} as 
\be \label{bareqn2}
\begin{cases}
\ds d\overline{X}_t^{n,i} = \overline{\alpha}_t^{n,i} dt + dW_t^i,
\ds \\
\ds d\overline{Y}_t^{n,i} = -\bigg( \frac{1}{n}D_x L^i\big(\overline{X}_t^{n,i}, \overline{\alpha}_t^{n,i}) + \frac{1}{n} D_m\sF\big(\overline{m}^n_t, \overline{X}_t^{n,i} \big) + \frac{1}{n} E_t^{\sF,i} \bigg) dt + \overline{Z}_t^{n,i} dW_t^i, \\
\overline{X}_0^{n,i} = \xi^i, \quad \overline{Y}_T^{n,i} = \frac{1}{n} D_m \sG\big(\overline{m}^n_T, \overline{X}_T^{n,i} \big) + \frac{1}{n} E^{\sG,i}_T,
\end{cases}
\ee
where $\overline{m}^n_t$ is the common law of the random variables $\overline{X}_t^{n,i}$ and 
\begin{align*}
E_t^{\sF,i} = \E\big[D_m \sF(m_{\overline{\bX}^n_t}^n, \overline{X}_t^{n,}) | \overline{X}_t^{n,i} \big] -  D_m\sF\big(\overline{m}^n_t, \overline{X}_t^{n,i}\big) , \\
E^{\sG,i}_T = \E\big[D_m \sG(m_{\overline{\bX}^n_T}^n, \overline{X}_T^{n,}) | \overline{X}_T^{n,i} \big] -  D_m\sG\big(\overline{m}^n_T, \overline{X}_T^{n,i}\big).
\end{align*}
We note that Assumption \ref{assump.mfc2} and \eqref{cst} imply easily that 
\begin{align} \label{fgerror}
\E |E^{\sF,i}_t |^2  \leq C/n^2, \quad \E |E^{\sG,i}_T |^2   \leq C/n^2. 
\end{align}
Now set 
\begin{align*}
\Delta X_t^i = \overline{X}_t^{n,i} - X_t^{\mf,i}, \quad \Delta Y_t^i = n \overline{Y}_t^{n,i} - Y_t^{\mf,i}, \quad \Delta \alpha_t^i = \overline{\alpha}_t^{n,i} - \alpha_t^{\mf,i},
\end{align*}
noticing the factor of $n$ in the definition of $\Delta Y^i_t$.
We now perform a computation very similar to the one appearing in the proof of Theorem \ref{thm.est3}, except instead of looking at the dynamics of $\sum_i \Delta X_t^i \cdot \Delta Y_t^i$, it turns out in this case we need only study the dynamics of $\Delta X_t^i \cdot \Delta Y_t^i$. Following the computation \eqref{xycomp}, we find that 
\begin{align} \label{xycompmfc}
&\E \bigg[ \int_0^T \bigg( \big(  D_a L^i(\overline{X}_t^{n,i}, \overline{\alpha}_t^{n,i}) - D_a L^i(X_t^{\mf,i}, \alpha_t^{\mf,i})\big) \cdot \Delta \alpha^i_t  + \big(D_x L^i(\overline{X}_t^{n,i}, \overline{\alpha}_t^{n,i}) - D_x L^i(X_t^{n,i}, \alpha_t^{n,i}) \big) \cdot \Delta X_t^i \nonumber \\
&\qquad+  \big(D_m \sF(\overline{m}^n_t, \overline{X}_t^{n,i}) -  D_m \sF(m_t^{\mf}, X_t^{\mf,i})  \big) \cdot\Delta X_t^i +  E^{\sF,i}_t \cdot \Delta X_t^i \bigg) dt \bigg] \nonumber
\\ &= - \E\big[\Delta X_T^i \cdot \Delta Y^i_T\big] = -\E\big[\big(D_m \sG(\overline{m}^n_T, \overline{X}_T^{n,i}) -  D_m \sG(m_T^{\mf}, X_T^{\mf,i})  \big) \cdot\Delta X_T^i +   E^{\sG,i}_T\cdot \Delta X_T^i\big]. 
\end{align}
The convexity in Assumption \ref{assump.convmfc} gives 
\begin{align*}
C_L \E\int_0^T |\Delta \alpha_t^i|^2 dt \le \E \bigg[ \int_0^T \bigg(& \big(  D_a L^i(\overline{X}_t^{n,i}, \overline{\alpha}_t^{n,i}) - D_a L^i(X_t^{\mf,i}, \alpha_t^{\mf,i})\big) \cdot \Delta \alpha^i_t  \\
	&+   \big(D_x L^i(\overline{X}_t^{n,i}, \overline{\alpha}_t^{n,i}) - D_x L^i(X_t^{n,i}, \alpha_t^{n,i}) \big) \cdot \Delta X_t^i \bigg)dt  \bigg] . 
\end{align*}
Combining this with \eqref{xycompmfc} and the displacement convexity of $\sF$ and $\sG$ (see \eqref{dispmonotone}) gives 
\begin{align*}
C_L \E \int_0^T |\Delta \alpha_t^i|^2 dt   \leq 
-\E\bigg[\int_0^T  E^{\sF,i}_t \cdot \Delta X_t^i  dt + E^{\sG,i}_T \cdot \Delta X_T^i \bigg].  
\end{align*}
The proof is now completed by an application of Young's inequality together with the estimates \eqref{fgerror}, exactly as in the proof of Theorem \ref{thm.est3}.
\end{proof}

Combining Corollary \ref{cor.propofchaos} and Proposition \ref{prop.propofchaos} gives the main estimate of this subsection. 

\begin{theorem} \label{thm.propofchaos}
Suppose that $m \in \spt$ satisfies a Poincar\'e inequality, and that Assumptions \ref{assump.convmfc} and \ref{assump.mfc2} are in force. Then there is a constant $C$ which depends on $m$ only through its second moment and Poincar\'e constant such that for each $n \in \N$ and $i \in \{1,...,n\}$,
\begin{align*}
\E \int_0^T |\alpha_t^{n,i} - \alpha_t^{\mf,i}|^2 \, dt  \leq \frac{C}{n}. 
\end{align*}
\end{theorem}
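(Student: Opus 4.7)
The plan is to obtain this estimate by a simple triangle inequality decomposition through the intermediate quantity $\overline{\alpha}^{n,i}_t$, the optimal distributed $n$-particle control, using the two preceding results as black boxes. Specifically, I would start from the elementary bound
\begin{align*}
\E \int_0^T |\alpha_t^{n,i} - \alpha_t^{\mf,i}|^2 \, dt \leq 2\,\E \int_0^T |\alpha_t^{n,i} - \overline{\alpha}_t^{n,i}|^2 \, dt + 2\,\E \int_0^T |\overline{\alpha}_t^{n,i} - \alpha_t^{\mf,i}|^2 \, dt,
\end{align*}
which splits the gap between the full-information control and the i.i.d.\ copy of the mean field control into a ``distributed versus full-information'' piece and a ``distributed versus mean field'' piece.

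The first term on the right is controlled directly by Corollary \ref{cor.propofchaos}, which under Assumption \ref{assump.convmfc} and the Poincar\'e hypothesis on $m$ gives a bound of the form $C(m)/n$, where $C(m)$ depends on $m$ only through its second moment and Poincar\'e constant (the latter enters through $C_P$, itself inherited from the spectral/Poincar\'e estimates of Lemma \ref{lem.cp} applied to $\overline{\bX}^n$). The second term is controlled by Proposition \ref{prop.propofchaos}, which under the additional Assumption \ref{assump.mfc2} gives a bound of order $C/n^2$. Adding the two yields a bound of order $1/n + 1/n^2 \leq 2/n$, which after absorbing constants is $C/n$ as claimed.

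Since Corollary \ref{cor.propofchaos} and Proposition \ref{prop.propofchaos} already do all the heavy lifting (the former via the $L^2$ control estimate Theorem \ref{thm.est3} specialized to the mean field case using the $n^{-2}$ bounds on $\|D_{ij} \sF(m^n_\cdot)\|$ and $\|D_{ij} \sG(m^n_\cdot)\|$, the latter via the FBSDE-based comparison with the McKean--Vlasov optimizer and the \eqref{cstcondition} bound on $D_m \sF$ and $D_m \sG$), there is essentially no remaining obstacle here. The only point worth double-checking is that the constants produced by the two lemmas depend on $m$ only through its second moment and Poincar\'e constant, which is explicit in the statements of Corollary \ref{cor.propofchaos} (via $C_P(m)$) and Proposition \ref{prop.propofchaos} (via the second moment alone). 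Tracking these dependencies through the triangle inequality then yields the stated uniformity in $i \in \{1,\ldots,n\}$, which follows from the symmetry of the problem (all particles have the same initial law $m$, and $L,\sF,\sG$ are symmetric), ensuring the bound is $i$-independent.
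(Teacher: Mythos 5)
Your proposal is correct and is essentially the paper's own argument: Theorem \ref{thm.propofchaos} is obtained precisely by combining Corollary \ref{cor.propofchaos} (the $O(1/n)$ bound on $\alpha^{n,i}-\overline{\alpha}^{n,i}$) with Proposition \ref{prop.propofchaos} (the $O(1/n^2)$ bound on $\overline{\alpha}^{n,i}-\alpha^{\mathrm{MF},i}$) via the elementary quadratic triangle inequality, with the stated dependence of the constant on $m$ read off from those two results.
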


We note that Theorem \ref{thm.propofchaos} immediately implies an estimate on $X^{\mf,i} - X^{n,i}$ more in the spirit of propagation of chaos, i.e. we get 
\begin{align*}
\E\sup_{0 \leq t \leq T} |X^{\mf,i}_t - X^{n,i}_t|^2 \leq \frac{C}{n}.
\end{align*}
In particular, this implies an estimate on the $k$-particle marginals in the quadratic Wasserstein distance: 
\begin{align} \label{propofchaos}
\wass_2^2\Big( \sL\big(X^{n,1}_{[0,T]}, ...., X^{n,k}_{[0,T]}\Big), (m^{\mf}_{[0,T]})^{\otimes k}\bigg) \leq  \frac{Ck}{n}, 
\end{align}
where $m^{\mf}_{[0,T]} = \sL(X^{\mf,1}_{[0,T]})$ is the law of the mean field optimal state process.

\begin{remark} \label{re:differentapproximations}
As was discussed in Section \ref{se:intro:MFC}, the two approximations $\cu \approx \vdt^n$ and $\vdt^n \approx \cv^n$ are quite different in nature:
The convergence rate of $\vdt^n(t,m,\ldots,m) \to \cu(t,m)$ is essentially dictated by the convergence rate of an i.i.d.\ empirical measure to its limit.
We obtain a bound of $O(1/n)$ in Proposition \ref{prop.uandvnd} because we impose smoothness assumptions on the functionals $\sF$ and $\sG$ and rely on results of \cite{CST}. Under the weaker assumption that $\sF$ and $\sG$ are Lipschitz with respect to a Wasserstein distance, our convergence rate for $|\vdt-V_{\mathrm{MF}}|$ would be the same as that of empirical measure of i.i.d.\ random variables in $\R^d$ in expected Wasserstein distance, which is well known to deteriorate with the dimension \cite{fournier2015rate}. 
Convexity plays very little role in Proposition \ref{prop.uandvnd}.
An inspection of the proof  shows that convexity is used only to guarantee the existence of an optimizer $\overline{\bm \alpha}^n$ of \eqref{vndist} which is symmetric, i.e. such that $\overline{\alpha}^{n,i}(t,x^i) = \overline{\alpha}^n(t,x^i)$ for some $\overline{\alpha}^n \in \sA^1$. Thus if the existence of a symmetric optimizer is proved (or assumed), the conclusion of Proposition \ref{prop.uandvnd} remains valid, which reveals that the bound $|\vdt^n(t,m,\ldots,m) - \cu(t,m)| = O(1/n)$ should be expected even without convexity. 
\end{remark}

\begin{remark}
The reader may have noticed that Corollary \ref{cor.propofchaos} and Proposition \ref{prop.propofchaos} have a different dependence on $n$. In particular, when translated to an estimate on the state processes, Proposition \ref{prop.propofchaos} yields
\begin{align} \label{distpropofchaos}
\wass_2^2\Big( \sL\big(\overline{X}^{n,1}_{[0,T]}, ...., \overline{X}^{n,k}_{[0,T]}\big), (m^{\mf}_{[0,T]})^{\otimes k}\Big) \leq  \frac{Ck}{n^2}, 
\end{align}
which shows propagation of chaos for the distributed problems with a better rate than we have obtained in the full-information setting (Proposition \ref{prop.propofchaos}). In particular, \eqref{distpropofchaos} has the same dependence on $n$ as the second author's recent work \cite{lackerhierarchies} on propagation of chaos in the uncontrolled setting. It is an open question whether the same rate can be obtained in the full-information regime, i.e., whether or not \eqref{propofchaos} can be improved to $O(1/n^2)$ for each fixed $k$.
\end{remark}

\section{Application to heterogeneous doubly stochastic interactions}  \label{sec.hetero}

In this section, we explain the implications of the results in Section \ref{sec.approx} for a control problem in which interactions are governed by a graph. We work in the same filtered probability space $(\Omega, \sF, \bP, \mathbb{F})$  as in the previous section.
We are given  functions $L : \R^d \times \R^d \to \R$ and $G_1,G_2,F_1,F_2 : \R^d \to \R$, as well as an \emph{interaction matrix} $J^n$ which is an $n \times n$ symmetric matrix of nonnegative entries with zeros on the diagonal. We define
\begin{align*}
F^n(\bx) &:= \frac{1}{n}\sum_{i=1}^n F_1(x^i) + \frac{1}{n }\sum_{i,j=1}^n J^n_{ij} F_2(x^i-x^j), \\
G^n(\bx) &:= \frac{1}{n}\sum_{i=1}^n G_1(x^i) + \frac{1}{n }\sum_{i,j=1}^n J^n_{ij}G_2(x^i-x^j).
\end{align*}
We define the Hamiltonian again by
\begin{align*}
H(x,p) = \sup_{a \in \R^d} \big( - a \cdot p - L(x,a) \big).
\end{align*}

We consider the following sequence of control problems indexed by $n \in \N$.
The value function of the $n^{th}$ control problem is the map $V^n = V^n(t,\bx) : [0,T] \times (\R^d)^n \to \R$ given by 
\begin{align} 
V^n(t,\bx) = \inf_{\alpha = (\alpha^1,...,\alpha^n) \in \sA^n} &\E\bigg[\int_t^T \bigg(\frac{1}{n} \sum_{i = 1}^n L(X_s^i, \alpha^i(s,X_s)) + F^n(\bX_s)\bigg) ds + G^n(\bX_T)\bigg], \label{costnpart-hetero}
\end{align}
subject to  the dynamics \eqref{dynamicsnpart}.
We also consider as above the lift of $V^n$, i.e. the function $\cv^n = \cv^n(t,\bm m) : [0,T] \times \spt^n \to \R$ given by 
\begin{align*}
\cv^n(t,\bm m) = \langle \bm m, V^n(t,\cdot) \rangle.
\end{align*}
We define the mean field value function $\cu = \cu(t,m) : [0,T] \times \sP_2(\R^d) \to \R$  exactly as in  \eqref{mfproblem}, with the functions $\sF$ and $\sG$ given by
\begin{align}
\begin{split}
\sF(m) &:= \langle m, F_1\rangle + \int_{\R^d}\int_{\R^d} F_2(x-y) \,m(dx)m(dy) \\
\sG(m) &:= \langle m, G_1\rangle + \int_{\R^d}\int_{\R^d} G_2(x-y) \,m(dx)m(dy).
\end{split} \label{def:hetero-sF,sG}
\end{align}

Finally, for each $n$ we can also introduce the corresponding distributed control problem, whose value function $\vdt^n = \vdt^n(t,\bm m) : [0,T] \times \spt^n \to \R$ is defined
\begin{align} 
\vdt^n(t,\bm m) = \inf_{\bm\alpha  \in \sA^n_d} &\E\bigg[\int_t^T \bigg(\frac{1}{n} \sum_{i = 1}^n L(X_s^i, \alpha^i(s,X^i_s)) + F^n(\bX_s)\bigg) ds + G^n(\bX_T)\bigg], \label{costnpartdist-hetero}
\end{align}
subject to the dynamics \eqref{dynamicsnpartdist}.

We make the following assumption, similar to \eqref{assump.convmfc}.

\begin{assumption} \label{assump.convmfc-hetero}
The functions $(L,H)$ satisfy the conditions of \eqref{assump.convmfc}. The functions $(F_1,F_2,G_1,G_2)$ are convex and $C^2$ with bounded derivatives of order two, and in addition $G^1, G^2 \in C^{2,\alpha}_{\text{loc}}$ for some $\alpha \in (0,1)$. The interaction matrix $J^n$ is symmetric, has nonnegative entries and zeros on the diagonal, and is doubly stochastic; i.e., $\sum_{j=1}^n J^n_{ij} = \sum_{j=1}^n J^n_{ji}=1$ for all $i=1,\ldots,n$.
\end{assumption}


Perhaps the most important point here is the assumption that $J^n$ is doubly stochastic. This could likely be relaxed to an approximate form, in which the empirical measure of row sums should converge in some Wasserstein distance to $\delta_1$; we will not pursue this generalization here, but see \cite{basak2017universality} for an implementation of this idea in the context of Ising and Potts models. The most important special case is when $J^n=A^n/d_n$, where $d_n \in \N$ and $A^n$ is the adjacency matrix of a $d_n$-regular graph.
The following lemma proves a first remarkable point in the doubly stochastic, which is that the distributed control problem actually coincides with the mean field one. There is a close analogy with the identity (4.5) in \cite[Proof of Theorem 2.5]{LacMukYeu}, which essentially covers the case $F_1\equiv F_2\equiv 0$ and $m=\delta_0$.

\begin{lemma} \label{le:dst=mf-hetero}
Under Assumption \ref{assump.convmfc-hetero}, for each $(t,m) \in [0,T] \times \spt$, we have
\begin{align*}
\vdt^n(t,m,m,\ldots,m)= \cu(t,m) .
\end{align*}
\end{lemma}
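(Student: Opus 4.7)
The plan is to show that the optimal distributed control from the symmetric initial law $(m,\ldots,m)$ is the mean field optimizer applied independently to each agent, from which both the value equality and the form of the cost follow immediately. Let $\alpha^*(t,x)$ be the optimizer of the mean field problem \eqref{mfproblem}, let $(X, \tilde Y, \tilde Z)$ solve the corresponding McKean-Vlasov FBSDE with common flow $m^*_s = \sL(X_s)$, and let $(X^i, \tilde Y^i, \tilde Z^i)_{i=1}^n$ be $n$ independent copies initialized from $\xi^i \sim m$ and driven by $W^i$. Setting $Y^i := \tilde Y^i / n$ and $Z^i := \tilde Z^i / n$, I would verify that $(\bX, \bY, \bZ)$ solves the distributed FBSDE \eqref{eq.mpdist} with $F = F^n$, $G = G^n$, and then invoke Proposition \ref{prop.mpsuff} to conclude that $\bm\alpha^* = (\alpha^*, \ldots, \alpha^*)$ is optimal for the distributed problem started at $(m,\ldots,m)$.

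The main technical step, and the one where the doubly stochastic assumption on $J^n$ is used, is to check that at the symmetric state $\bm m_s = (m^*_s, \ldots, m^*_s)$ the drivers $\sF^i(x, \bm m_s)$ and $\sG^i(x, \bm m_s)$ in \eqref{eq.mpdist} reduce to $\tfrac{1}{n} D_m \sF(m^*_s, x)$ and $\tfrac{1}{n} D_m \sG(m^*_T, x)$, where $\sF,\sG$ are the mean field functionals \eqref{def:hetero-sF,sG}. Using $J^n_{jk} = J^n_{kj}$, one computes
\begin{align*}
D_i F^n(\bx) = \tfrac{1}{n} \nabla F_1(x^i) + \tfrac{1}{n}\sum_k J^n_{ik}\big[\nabla F_2(x^i - x^k) - \nabla F_2(x^k - x^i)\big],
\end{align*}
and integrating against $m^{\otimes(n-1)}$ in the variables $(x^j)_{j\ne i}$, with $x^i = x$ fixed, the integrand becomes independent of $k$ and the weights collapse via $\sum_{k\ne i} J^n_{ik} = 1$ (the row-sum property of $J^n$ together with zero diagonal). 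This produces exactly $\tfrac{1}{n}\bigl[\nabla F_1(x) + \int(\nabla F_2(x-y) - \nabla F_2(y-x))\,m^*_s(dy)\bigr] = \tfrac{1}{n} D_m \sF(m^*_s, x)$, and the argument for $G^n$ and $\sG$ is identical. After the rescaling $\tilde Y^i = n Y^i$, the distributed FBSDE decouples into $n$ independent copies of the mean field McKean-Vlasov FBSDE, which are solved by construction.

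Having identified $\bm\alpha^*$ as an optimal distributed control, I would evaluate its cost directly. Since the $X^i$ are i.i.d. with common law $m^*_s$, the same double stochasticity $\tfrac{1}{n}\sum_{j,k}J^n_{jk} = 1$ gives
\begin{align*}
\E F^n(\bX_s) = \langle m^*_s, F_1\rangle + \int\!\!\int F_2(x-y)\, m^*_s(dx)m^*_s(dy) = \sF(m^*_s),
\end{align*}
and similarly $\E G^n(\bX_T) = \sG(m^*_T)$, so $J^n(\bm\alpha^*) = \cu(t,m)$, completing the proof. The only genuinely delicate point is the double-stochasticity collapse in the driver computation, which renders the asymmetry of $J^n$ invisible at symmetric states; after that, everything is a direct application of Proposition \ref{prop.mpsuff} and bookkeeping.
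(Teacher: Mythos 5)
Your argument is correct in substance, and the key computation — that at a symmetric vector of laws the drivers of \eqref{eq.mpdist} collapse, via the symmetry, zero diagonal and unit row sums of $J^n$, to $\tfrac1n D_m\sF(m^*_s,\cdot)$ and $\tfrac1n D_m\sG(m^*_T,\cdot)$ — checks out, as does the cost identity $\E F^n(\bX_s)=\sF(m^*_s)$, $\E G^n(\bX_T)=\sG(m^*_T)$ for i.i.d.\ states. But your route is genuinely different from the paper's. The paper argues variationally: the inequality $\vdt^n\le\cu$ is obtained by plugging an arbitrary mean field control into every coordinate (exactly your cost bookkeeping, but for arbitrary $\alpha$, so no optimizer is needed); the reverse inequality takes an optimal distributed control (existence from Proposition \ref{existunique}), couples copies through a common Brownian motion and common initial point so they are identically distributed but not independent, applies Jensen's inequality with the doubly stochastic weights to the convex data $(L,F_1,F_2,G_1,G_2)$, and then converts the resulting averaged open-loop control into an admissible Markovian one via truncation, the Markovian projection (mimicking) theorem and Veretennikov's result. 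Your route instead runs everything through the Pontryagin machinery: you build a solution of \eqref{eq.mpdist} from $n$ independent copies of the mean field adjoint system and invoke Proposition \ref{prop.mpsuff}, which yields the value equality and, as a bonus, the stronger statement that the i.i.d.\ mean field feedback is the optimal distributed control. The price is a set of inputs the paper deliberately avoids here: (i) existence of a mean field optimizer $\alpha^*\in\sA^1$ and the necessary maximum principle for McKean--Vlasov control (equivalently, solvability of the mean field Pontryagin FBSDE together with sufficiency to identify its cost with $\cu(t,m)$) — these are standard under displacement convexity and are cited in Section \ref{sec.mfc} (e.g.\ Theorem 4.5 of \cite{cardelmfc}), but are not established in the paper; (ii) a verification that $(L,F^n,G^n)$ satisfies Assumption \ref{assump.conv} (convexity and the spectral/H\"older bounds, as carried out in the proof of Corollary \ref{cor.mf-hetero}) so that Proposition \ref{prop.mpsuff} applies; and (iii) the open-loop/closed-loop equivalence for the distributed problem to pass from optimality in \eqref{controldistol} to the value $\vdt^n$ in \eqref{costnpartdist-hetero} (this is the bookkeeping discussed before Proposition \ref{existunique}). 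With those points made explicit your proof is complete; it trades the paper's self-contained coupling/Jensen/mimicking argument for a shorter FBSDE verification that additionally identifies the optimizer.
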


Once we know this, we deduce a convergence rate by specializing Theorem \ref{thm.est1}:

\begin{corollary} \label{cor.mf-hetero}
Under Assumption \ref{assump.convmfc-hetero},  for each $(t,m) \in [0,T] \times \spt^n$ with $\bm m$ obeying a Poincar\'e inequality, we have
\begin{align*}
0 \leq \cu(t,m,m,\ldots,m) - \cv^n(t,m) \leq C \tr((J^n)^2)/n, 
\end{align*}
where the constant $C$ depends only on $T-t$, the bounds on second derivatives of $(F_1,F_2,G_1,G_2,L,H)$, and the Poincar\'e constant of $m$.
\end{corollary}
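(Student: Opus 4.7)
The strategy is to combine Lemma \ref{le:dst=mf-hetero}, which identifies $\vdt^n(t,m,\ldots,m)$ with $\cu(t,m)$, with the non-asymptotic estimate of Theorem \ref{thm.est1} applied to the specific data $(L, F^n, G^n)$ and initial datum $\bm m = (m,m,\ldots,m)$. Thus the whole argument reduces to verifying that $(L, F^n, G^n)$ satisfy Assumption \ref{assump.conv} with dimension-free constants, and then controlling the cross-derivative sums that appear on the right-hand side of Theorem \ref{thm.est1}.

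First, I would check the structural assumptions for $(F^n, G^n)$. Convexity is immediate, since convex combinations of the convex maps $x^i \mapsto F_1(x^i)$ and $(x^i,x^j) \mapsto F_2(x^i-x^j)$ preserve convexity (and analogously for $G^n$). For the spectral upper bound $D^2 F^n \le (C_F/n) I_{nd \times nd}$, observe that for any $\bm y = (y^1,\ldots,y^n)$,
\begin{align*}
\bm y^\top D^2 F^n(\bx) \bm y = \frac{1}{n}\sum_{i=1}^n (y^i)^\top D^2 F_1(x^i) y^i + \frac{1}{n}\sum_{i,j=1}^n J^n_{ij}\,(y^i - y^j)^\top D^2 F_2(x^i-x^j)(y^i-y^j).
\end{align*}
Bounding the second term by $2\|D^2 F_2\|_\infty \sum_{i,j} J^n_{ij}(|y^i|^2+|y^j|^2)$ and using the doubly stochastic property $\sum_j J^n_{ij} = \sum_j J^n_{ji} = 1$ gives $\le (4\|D^2F_2\|_\infty/n)\sum_i |y^i|^2$. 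This is where the doubly stochastic hypothesis enters decisively: without it the constant would deteriorate with the row sums of $J^n$. An identical bound applies to $G^n$, giving uniform (in $n$) constants $C_F$ and $C_G$.

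Next, I would compute the cross-derivatives. For $i \neq j$, only the terms $(k,l)=(i,j)$ and $(k,l)=(j,i)$ in $\sum_{k,l}J^n_{kl}F_2(x^k-x^l)$ contribute, yielding
\begin{align*}
D_{ij}F^n(\bx) = -\tfrac{J^n_{ij}}{n}\bigl(D^2 F_2(x^i-x^j) + D^2 F_2(x^j-x^i)\bigr),
\end{align*}
by symmetry of $J^n$, so $\|D_{ij}F^n\|_\infty \le (2J^n_{ij}/n)\|D^2 F_2\|_\infty$ and analogously for $G^n$. Summing (and using $\sum_{i,j}(J^n_{ij})^2 = \tr((J^n)^2)$ since $J^n$ is symmetric):
\begin{align*}
n\sum_{1 \le i < j \le n}\|D_{ij}G^n\|_\infty^2 \le \frac{4\|D^2 G_2\|_\infty^2}{n}\sum_{i<j}(J^n_{ij})^2 = \frac{2\|D^2G_2\|_\infty^2}{n}\tr((J^n)^2),
\end{align*}
and similarly for $F^n$. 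Plugging these into the bound $\sR(t,\bm m)$ of Theorem \ref{thm.est1} and using $\E|D_{ij}G^n|^2 \le \|D_{ij}G^n\|_\infty^2$ produces a right-hand side of the form $C\tr((J^n)^2)/n$.

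Finally, to confirm that the prefactor $C_t(\bm m)$ from Theorem \ref{thm.est1} is $n$-independent, I would note that $C_S$ depends only on $T$, $\|D_{xx}L\|_\infty$, $C_F$, $C_G$, all of which we have just shown are dimension-free under Assumption \ref{assump.convmfc-hetero}. Since Poincar\'e inequalities tensorize, $\bm m = (m,\ldots,m)$ has the same Poincar\'e constant as $m$, so $C_P(\bm m)$ depends only on this single constant. Putting everything together, Theorem \ref{thm.est1} yields
\begin{align*}
0 \le \vdt^n(t,m,\ldots,m) - \cv^n(t,m,\ldots,m) \le C\,\tr((J^n)^2)/n,
\end{align*}
and invoking Lemma \ref{le:dst=mf-hetero} to replace $\vdt^n(t,m,\ldots,m)$ by $\cu(t,m)$ completes the proof. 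The only step demanding genuine care is the spectral bound on $D^2F^n$ and $D^2G^n$, as it is the one place where the doubly stochastic hypothesis is essential; everything else is a transcription of the mean field argument in Corollary \ref{cor.mf} with the symbolic substitution $1/n^2 \leadsto (J^n_{ij}/n)^2$.
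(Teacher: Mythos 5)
Your proposal is correct and follows essentially the same route as the paper: reduce via Lemma \ref{le:dst=mf-hetero}, verify Assumption \ref{assump.conv} for $(L,F^n,G^n)$ with dimension-free spectral constants using the doubly stochastic property, bound the cross-derivative sums by $\mathrm{Tr}((J^n)^2)/n$, and apply Theorem \ref{thm.est1}. The only difference is cosmetic: you write the Hessian quadratic form directly in the symmetrized $(y^i-y^j)$ form, whereas the paper first computes the blocks $D_{ij}G^n$, $D_{ii}G^n$ and then reassembles them into the same expression.
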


Again, a typical case is when $J^n_{ij}=A^n/d_n$, where $d_n \in \N$ and $A^n$ is the adjacency matrix of some $d_n$-regular graph. Then $\tr((J^n)^2)=n/d_n$, and Corollary \ref{cor.mf-hetero} yields $|\cu - \cv^n| \le C/d_n$, which vanishes as long as $d_n \to \infty$. In the very sparse regime where $\sup_n d_n < \infty$, we do not expect the usual mean field approximation to hold; see \cite{lacker2022case} for a discussion of dense versus sparse regimes in the context of mean field games and control.

\begin{proof}[Proof of Corollary \ref{cor.mf-hetero}]
In light of Lemma \ref{le:dst=mf-hetero}, it suffices to show the claim with $\vdt^n$ in place of $\cu$. We will check Assumption \ref{assump.conv} to apply Theorem \ref{thm.est1}. We first check the spectral bounds on $G^n$, with the bounds on $F^n$ checked analogously.
Recalling that $J^n$ is symmetric, we compute
\begin{align*}
D_{ij} G^n(\bx) &= -\frac{1}{n}  J^n_{ij}\big(D^2 G_2(x^i-x^j) + D^2 G_2(x^j-x^i)\big), \ \ i \neq j, \\
D_{ii} G^n(\bx) &= \frac{1}{n}D^2G_1(x^i) + \frac{1}{n}\sum_{j=1}^n J_{ij}^n \big( D^2 G_2(x^i-x^j) + D^2 G_2(x^j-x^i)\big).
\end{align*}
For a vector $\bz \in (\R^d)^n$,
\begin{align*}
n &\bz^\top D^2 G^n(\bx) \bz \\
	&= \sum_{i=1}^n z_i^\top D^2 G_1(x^i) z_i + \sum_{i,j=1}^n z_i^\top J^n_{ij} \big(D^2 G_2(x^i-x^j) +  D^2 G_2(x^j-x^i)\big) (z_i-z_j) \\
	&= \sum_{i=1}^n z_i^\top D^2 G_1(x^i) z_i + \frac12 \sum_{i,j=1}^n (z_i-z_j)^\top J^n_{ij}  \big(D^2 G_2(x^i-x^j) +  D^2 G_2(x^j-x^i)\big) (z_i-z_j).
\end{align*}
Recalling that $J^n_{ij} \ge 0$ and $G_2$ is convex, for each $(i,j)$ we deduce
\begin{align*}
0 &\le (z_i-z_j)^\top J^n_{ij} \big( D^2 G_2(x^i-x^j) +  D^2 G_2(x^j-x^i)\big) (z_i-z_j) \\
	&\le 2 J^n_{ij}   \|D^2G_2\|_\infty |z_i-z_j|^2 \\
	&\le 4J^n_{ij} \|D^2G_2\|_\infty (|z_i|^2 + |z_j|^2).
\end{align*}
Recalling that $\sum_{j=1}^nJ^n_{ij}= \sum_{j=1}^n J^n_{ji}=1$ and $G_1$ is convex, we deduce that
\begin{align*}
0 &\le n  D^2 G^n(\bx) \le \big(\|D^2 G_1\|_\infty + 4 \|D^2G_2\|_\infty\big)I_{nd \times nd}. 
\end{align*}
Finally, recalling the form of $D_{ij}G^n$ for $i \neq j$, and that $J^n_{ii}=0$ for all $i$, we have 
\begin{align*} 
n \sum_{1 \le i < j \le n} \|D_{ij} G^n\|_{\linf}^2  &\leq \frac{4}{n}\|D^2 G_2\|_{\linf}^2 \sum_{1 \le i < j \le n} (J^n_{ij})^2 = \frac{4}{n}\|D^2 G_2\|_{\linf}^2 \tr((J^n)^2).
\end{align*}
We are now in a position to apply Theorem \ref{thm.est1} to complete the proof.
\end{proof}

\begin{proof}[Proof of Lemma \ref{le:dst=mf-hetero}]
We first check the easier inequality $\vdt^n(t,m,m,\ldots,m) \le \cu(t,m)$. Let $\alpha \in \A^1$ be any control for $\cu$. Define $\bm\alpha=(\alpha^1,\ldots,\alpha^n) \in \ad^n$ by setting $\alpha^i(t,x^i)=\alpha(t,x^i)$. The corresponding state processes $\bX=(X^1,\ldots,X^n)$, solving \eqref{dynamicsnpart}, are i.i.d.\ with the same law as $X$ solving \eqref{dynamicsmf}. Let $\widetilde{X}$ denote an independent copy of $X$. Then, recalling the form of $\sG$ from \eqref{def:hetero-sF,sG},
\begin{align*}
\sG(\sL(X_T)) &= \E\bigg[G_1(X_T) + \frac{1}{n}\sum_{i=1}^n G_2(X_T-\widetilde{X}_T) \bigg] \\
	&= \E\bigg[\frac{1}{n}\sum_{i=1}^n G_1(X_T) + \frac{1}{n}\sum_{i,j=1}^n J^n_{ij} G_2(X_T-\widetilde{X}_T) \bigg] \\
	&= \E\bigg[\frac{1}{n}\sum_{i=1}^n G_1(X^i_T) + \frac{1}{n}\sum_{i,j=1}^n J^n_{ij} G_2(X^i_T-X^j_T) \bigg] = \E[G^n(\bX_T)].
\end{align*}
Indeed, the second step comes from the assumption that the row sums of $J^n$ are $1$, and the third from the assumption that $J^n_{ii}=0$ for all $i$. Arguing similarly for $\sF(\sL(X_t))$, we deduce that
\begin{align*}
\E&\bigg[\int_t^T \bigg(L(X_s,\alpha(s,X_s)) + \sF(\sL(X_s))\bigg) ds + \sG(\sL(X_T))\bigg] \\
	&= \E\bigg[\int_t^T \bigg(\frac{1}{n} \sum_{i = 1}^n L(X_s^i, \alpha^i(s,X^i_s)) + F^n(\bX_s)\bigg) ds + G^n(\bX_T)\bigg] \\
	&\ge \vdt^n(t,m,m,\ldots,m).
\end{align*}

To prove the reverse inequality $\vdt^n(t,m,m,\ldots,m) \ge \cu(t,m)$, fix an optimal control $\bm\alpha=(\alpha^1,\ldots,\alpha^n) \in \ad^n$. Let $\bX=(X^1,\ldots,X^n)$ be the corresponding state process, solving \eqref{dynamicsnpart}, driven by independent Brownian motions $W^1,\ldots,W^n$. 
Recall that $W$ denotes an additional independent Brownian motion, and fix some random variable $\xi \sim m$. We now construct $\overline{\bX}=(\overline{X}^1,\ldots,\overline{X}^n)$ as the solutions of
\begin{align*}
d\overline{X}^i_s = \alpha^i(s,\overline{X}^i_s)ds + dW_s, \ \ \overline{X}^i_t=\xi.
\end{align*}
This way, $\overline{X}^i \stackrel{d}{=} X^i$ for each $i=1,\ldots,n$, although the $\overline{X}^i$ are not independent. 
Let $\widetilde{\bX}=(\widetilde{X}^1,\ldots,\widetilde{X}^n)$ be an independent copy of $\bX=(X^1,\ldots,X^n)$, and let $\widetilde{\overline{\bX}}=(\widetilde{\overline{X}}^1,\ldots,\widetilde{\overline{X}}^n)$ be an independent copy of $\overline{\bX}=(\overline{X}^1,\ldots,\overline{X}^n)$.
Define
\begin{align*}
\overline{X} = \frac{1}{n}\sum_{i=1}^n \overline{X}^i, \quad \widetilde{\overline{X}} = \frac{1}{n}\sum_{i=1}^n \widetilde{\overline{X}}^i, \quad \overline{\alpha}_s = \frac{1}{n}\sum_{i=1}^n \alpha^i(s,\overline{X}^i_s).
\end{align*}

By convexity of $G_2$, and because $J^n_{ii}=0$ for all $i$ and $\frac{1}{n}\sum_{i,j=1}^nJ^n_{ij}=1$, we have
\begin{align*}
\E[G^n(\bX_T)] &= \E\bigg[\frac{1}{n}\sum_{i=1}^n G_1(X^i_T) + \frac{1}{n}\sum_{i,j=1}^n J^n_{ij} G_2(X^i_T-X^j_T) \bigg] \\
	&= \E\bigg[\frac{1}{n}\sum_{i=1}^n G_1(\overline{X}^i_T) + \frac{1}{n}\sum_{i,j=1}^n J^n_{ij} G_2(\overline{X}^i_T-\widetilde{\overline{X}}^j_T) \bigg] \\
	&\ge \E\bigg[ G_1\bigg(\frac{1}{n}\sum_{i=1}^n \overline{X}^i_T \bigg) + G_2\bigg(\frac{1}{n}\sum_{i,j=1}^n J^n_{ij} (\overline{X}^i_T-\widetilde{\overline{X}}^j_T)\bigg) \bigg] \\
	&= \E\bigg[ G_1(\overline{X}_T) + G_2(\overline{X}_T-\widetilde{\overline{X}}_T) \bigg] \\
	&= \sG(\sL(\overline{X}_T)).
\end{align*} 
Similarly, $\E[F^n(\bX_s)] \ge \sF(\sL(\overline{X}_s))$ for each $s \in [t,T]$, and
\begin{align*}
\E\bigg[ \frac{1}{n}\sum_{i=1}^n L(X^i_s,\alpha^i(s,X^i_s)) \bigg] &= \E\bigg[ \frac{1}{n}\sum_{i=1}^n L(\overline{X}^i_s,\alpha^i(s,\overline{X}^i_s)) \bigg] \ge \E\big[ L(\overline{X}_s, \overline{\alpha}_s) \big].
\end{align*}
We deduce that
\begin{align*}
\vdt^n(t,m,m,\ldots,m) &= \E\bigg[\int_t^T \bigg(\frac{1}{n} \sum_{i = 1}^n L(X_s^i, \alpha^i(s,X^i_s)) + F^n(\bX_s)\bigg) ds + G^n(\bX_T)\bigg]  \\
	&\ge \E\bigg[\int_t^T \bigg(L(\overline{X}_s,\overline{\alpha}_s) + \sF(\sL(\overline{X}_s))\bigg) ds + \sG(\sL(\overline{X}_T))\bigg].
\end{align*}
Now, if we could find some $\alpha \in \sA^1$ such that $\overline{\alpha}_s=\alpha(s,\overline{X}_s)$, then we could conclude that the right-hand side would be at least $\cu(t,m)$, completing the proof.

It is not clear, however, that we may find such $\alpha \in \sA^1$ in general. But a simple approximation will remedy this. Note that $\overline{X}$ satisfies
\begin{align*}
d\overline{X}_s = \overline\alpha_s ds + dW_s, \quad \overline{X}_t= \xi.
\end{align*}
Let $\overline\alpha^n_s$ be the projection of $\overline\alpha_s$ onto the ball of radius $n$, and define $\overline{X}^n$ by
\begin{align*}
d\overline{X}^n_s = \overline\alpha^n_s ds + dW_s, \quad \overline{X}^n_t=\xi.
\end{align*}
It is clear that $\E \sup_{s \in [t,T]}|\overline{X}^n_s-\overline{X}_s|^2 \to 0$ and $\E\int_s^T|\overline\alpha^n_s-\overline\alpha_s|^2\,ds \to 0$. Using continuity of $(L,F,G)$ along with their quadratic growth (implied by boundedness of the second derivatives), it is straightforward to deduce that
\begin{align*}
\lim_{n\to\infty} &\E\bigg[\int_t^T \bigg(L(\overline{X}^n_s,\overline{\alpha}^n_s) + \sF(\sL(\overline{X}^n_s))\bigg) ds + \sG(\sL(\overline{X}^n_T))\bigg] \\
&= \E\bigg[\int_t^T \bigg(L(\overline{X}_s,\overline{\alpha}_s) + \sF(\sL(\overline{X}_s))\bigg) ds + \sG(\sL(\overline{X}_T))\bigg].
\end{align*}
Hence, to complete the proof, it suffices to show for each $n$ that 
\begin{align}
\E\bigg[\int_t^T \bigg(L(\overline{X}^n_s,\overline{\alpha}^n_s) + \sF(\sL(\overline{X}^n_s))\bigg) ds + \sG(\sL(\overline{X}^n_T))\bigg] \ge \cu(t,m). \label{pf:hetero-goal}
\end{align}
To this end, let us fix $n$, and find a Borel measurable function $\widehat\alpha : [0,T] \times \R^d \to \R^d$ satisfying 
\begin{align}
\widehat{\alpha}(s,x) = \E[\overline\alpha^n_s\,|\,\overline{X}^n_s=x]. \label{pf:hetero-alphahat}
\end{align}
Indeed, a jointly measurable version exists by \cite[Proposition 5.1]{brunick2013mimicking}, and we may take $\widehat\alpha$ to be bounded because $\overline{\alpha}^n$ is. By the Markovian projection theorem \cite[Corollary 3.7]{brunick2013mimicking}, we may find a weak solution $\widehat{X}$ of the SDE
\begin{align*}
d\widehat{X}_s = \widehat{\alpha}(s,\widehat{X}_s) ds + dW_s,
\end{align*}
satisfying $\widehat{X}_s \stackrel{d}{=} \overline{X}^n_s$ for all $s \in [t,T]$. By boundedness of $\widehat{\alpha}$ and a result of Veretennikov \cite{veretennikov}, this SDE is in fact well posed in the strong sense, and so $\widehat{\alpha} \in \sA^1$. Using \eqref{pf:hetero-alphahat}, Fubini's theorem, and convexity of $(L,F,G)$, we have
\begin{align*}
\E \int_t^T L(\overline{X}^n_s,\overline{\alpha}^n_s)  ds   &\ge \E \int_t^T L(\overline{X}^n_s,\widehat{\alpha}(s,\overline{X}^n_s))  ds.
\end{align*}
Using  the fact that $\widehat{X}_s \stackrel{d}{=} \overline{X}^n_s$ for all $s$, we deduce
\begin{align*}
\E&\bigg[\int_t^T \bigg(L(\overline{X}^n_s,\overline{\alpha}^n_s) + \sF(\sL(\overline{X}^n_s))\bigg) ds + \sG(\sL(\overline{X}^n_T))\bigg] \\
	&\ge \E\bigg[\int_t^T \bigg(L(\widehat{X}_s,\widehat{\alpha}(s,\widehat{X}_s)) + \sF(\sL(\widehat{X}_s))\bigg) ds + \sG(\sL(\widehat{X}_T))\bigg].
\end{align*}
The right-hand side is at least $\cu(t,m)$, and the proof of \eqref{pf:hetero-goal} is complete.
\end{proof}

\section{Tradeoffs between convexity and smallness}

\label{sec.nonconvex}

The goal of this section is to explain how versions our main estimates can be obtained when convexity of the data is traded for some form of smallness. 

We start with an informal discussion of what results we can hope to obtain without convexity. First, let us mention that we cannot expect versions of our main estimates to hold in general. Indeed, if an estimate like the one in Theorem \ref{thm.est1} was obtained in a general non-convex setting, it could be combined with the arguments from Section \ref{sec.mfc} to give an estimate of the form 
\begin{align} \label{toogood}
|\vdt^n(t,m,...,m) - \cu(t,m)| \leq \frac{C}{n}
\end{align}
for mean field control problems with costs $\sF$ and $\sG$ which are neither convex nor displacement convex. While no counterexample is currently known, obtaining \eqref{toogood} in the non-convex setting would be surprising since the convergence problem for mean field control problems with non-convex data presents serious difficulties, and the rate $1/n$ would be a significant improvement over existing estimates. See \cite{cdjs} and the references therein for a discussion of the convergence problem for mean field control in the non-convex setting, and \cite{Cecchin2021FiniteSN} for results in the finite state-space setting.

Nevertheless, an inspection of the proofs of Theorems \ref{thm.est1} and \ref{thm.est2} show that the estimates obtained depend on convexity of $L^i$, $F$, and $G$ only through the bound 
\begin{align} \label{operatorest}
\sup_{(t,\bx) \in [0,T] \times (\R^d)^n} |D^2 V(t,\bx)|_{\text{op}} \leq \frac{C_S}{n}
\end{align}
which was proved in Lemma \ref{lem.spectral}.
Thus if an analogous bound on the operator norm of $D^2V$ can be obtained without convexity, extensions of Theorems \ref{thm.est1} and \ref{thm.est2} would follow. This strategy can be executed under the assumption that $T$ is small (relative to the regularity of the data), but we do not pursue this generalization for brevity.

We will, however, prove a version of Theorem \ref{thm.est3} under the following assumption, which replaces convexity of $F$ and $G$ with Lipschitz continuity, together with a smallness condition.

\begin{assumption} \label{assump.nonconvex}
The functions $F, G : (\R^d)^n \to \R$ are bounded from below, $C^2$ with bounded derivatives of order 1 and 2. The functions $L^i, H^i : \R^d \times \R^d \to \R$ are $C^2$ with bounded derivatives of order two (but not necessarily of order one).
 Moreover $L^i$ is bounded from below and satisfies
\begin{align} \label{lsrict2}
    D^2 L^i(x,a) = \begin{pmatrix} 
    D_{xx} L^i(x,a) & D_{xa} L^i(x,a) \\
    D_{ax} L^i(x,a) & D_{aa} L^i(x,a)
    \end{pmatrix}
    \geq C_L \begin{pmatrix} 0 & 0 \\
    0 & I_{d \times d}    
    \end{pmatrix}, \quad \text{for all } x,a \in \R^d
\end{align}
or equivalently 
\begin{align} \label{lstrictequiv2}
\big(D_x L^i(x,a) - D_x L^i(\bar{x}, \bar{a})\big) \cdot (x - \bar{x}) + \big(D_a L^i(x,a) - D_a L^i(\bar{x}, \bar{a})\big) \cdot (a - \bar{a}) \geq C_L |a - \bar{a}|^2, 
\end{align}
for all $x,\bar{x}, a, \bar{a} \in \R^d$ and for some constant $C_L > 0$.
We denote by $C_F$ and $C_G$ two constants such that the following spectral lower bounds hold:
\begin{align*}
 D^2 F(\bx) \ge -\frac{C_F}{n} I_{nd \times nd}, \quad D^2 G(\bx) \ge -\frac{C_G}{n} I_{nd \times nd}.
\end{align*}
\end{assumption} 

Let us mention that under Assumption \ref{assump.nonconvex}, the existence of a solution to the McKean-Vlasov FBSDE \eqref{eq.mpdist} can be obtained directly from \cite{cardelmkvfbsde}, and it provides a necessary condition for optimality. However, without the additional convexity assumptions on $\sF$ and $\sG$, it is not clear that the maximum principle is a sufficient condition; that is, the conclusion of Proposition \ref{prop.mpnec} holds but not necessary the conclusion of Proposition \ref{prop.mpsuff}. Thus, in the following Theorem we do not assert the existence of an optimizer for the distributed problem, we simply show that if one exists then it is close to the optimizer for the standard control problem (whose existence and uniqueness is well-known under Assumption \ref{assump.nonconvex}). We additionally assume a Poincar\'e inequality for the optimal state process, with the subsequent Proposition \ref{prop.poinnonconvex} giving some sufficient conditions.

\begin{theorem} \label{thm.nonconvex}
Suppose that Assumption \ref{assump.nonconvex} holds, and that $\bm\alpha$ is an optimizer for \eqref{controldistol} and $\overline{\bm\alpha}$ is an optimal (open loop) control for the full-information control problem.
Suppose $\sL(\bX_t)$ satisfies a Poincar\'e inequality with constant $C_P$, for each $t \in [0,T]$.
 Suppose further that
\begin{align}
\frac12 C_F T^2 + C_G T < C_L  . \label{asmp:smallness}
\end{align}
Then we have 
\begin{align*}
\E \int_0^T |\bm\alpha_t - \overline{\bm\alpha}_t|^2 dt  \leq  C n^2 \sum_{1 \le i < j \le n} \big(\|D_{ij} F\|_{\linf}^2 + \|D_{ij} G\|_{\linf}^2 \big),
 \end{align*}
 where $C$ depends only on $(C_P,C_L,C_F,C_S,T)$.
\end{theorem}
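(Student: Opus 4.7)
The strategy is to adapt the proof of Theorem \ref{thm.est3}, replacing the use of convexity of $F$ and $G$ by their semiconvexity bounds from Assumption \ref{assump.nonconvex} and absorbing the resulting loss through the smallness condition \eqref{asmp:smallness}. Under Assumption \ref{assump.nonconvex}, the stochastic maximum principle (Proposition \ref{prop.mpnec}) remains a necessary condition for both optimizers, so I would first write the FBSDE for the full-information optimizer $\overline{\bm\alpha}$ with state/adjoint $(\overline{\bX},\overline{\bY},\overline{\bZ})$ as in \eqref{eq.mpstand}, and then recast the FBSDE for the distributed optimizer $\bm\alpha$ with state/adjoint $(\bX,\bY,\bZ)$ as the perturbed system
\begin{align*}
dY_t^i = -\Big(\tfrac{1}{n}D_x L^i(X_t^i,\alpha_t^i) + D_iF(\bX_t) + E^{F,i}_t\Big)dt + Z_t^i dW_t^i, \quad Y_T^i = D_iG(\bX_T) + E^{G,i}_T,
\end{align*}
where $E^{F,i}_t = \E[D_iF(\bX_t)\mid X_t^i] - D_iF(\bX_t)$ and $E^{G,i}_T = \E[D_iG(\bX_T)\mid X_T^i] - D_iG(\bX_T)$ are the conditional-variance-type errors arising from independence of $(X^j_t)_{j \neq i}$.

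Next, with $\Delta\bX = \bX-\overline{\bX}$, $\Delta\bY = \bY-\overline{\bY}$, $\Delta\bm\alpha = \bm\alpha-\overline{\bm\alpha}$, I would apply It\^o's formula to $\Delta X^i_t \cdot \Delta Y^i_t$, sum over $i$, and take expectation, exactly as in the derivation of \eqref{xycomp}. The strict convexity of $L^i$ from \eqref{lstrictequiv2} produces the coercive lower bound $\frac{C_L}{n}\E\int_0^T|\Delta\bm\alpha_t|^2 dt$. The difference is that the cross-terms $\sum_i (D_iF(\bX_t)-D_iF(\overline{\bX}_t))\cdot \Delta X^i_t$ and $\sum_i (D_iG(\bX_T)-D_iG(\overline{\bX}_T))\cdot \Delta X^i_T$ are no longer nonnegative; by the semiconvexity bounds $D^2F \geq -C_F/n$ and $D^2G \geq -C_G/n$ they are bounded below by $-\frac{C_F}{n}|\Delta\bX_t|^2$ and $-\frac{C_G}{n}|\Delta\bX_T|^2$ respectively.

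At this point I would invoke the kinematic estimates \eqref{xalphaest1}--\eqref{xalphaest2}, namely $\E|\Delta\bX_t|^2 \leq t\,\E\int_0^T|\Delta\bm\alpha_s|^2 ds$ and $\E\int_0^T|\Delta\bX_t|^2 dt \leq \frac{T^2}{2}\E\int_0^T|\Delta\bm\alpha_t|^2 dt$, to convert the semiconvexity losses into multiples of $\E\int_0^T|\Delta\bm\alpha_t|^2 dt$. Collecting these contributions on the left produces the coefficient $\tfrac{1}{n}\bigl(C_L - \tfrac12 C_F T^2 - C_G T\bigr)$, which is strictly positive precisely by hypothesis \eqref{asmp:smallness}; denote this coefficient by $\eta/n$. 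I would then handle the residual terms $-\E\int_0^T \bm{E}^F_t\cdot\Delta\bX_t\,dt - \E[\bm{E}^G_T\cdot \Delta\bX_T]$ by Young's inequality with parameters $\delta,\epsilon>0$, apply the kinematic bounds once more, and choose $\delta = \eta/(nT^2)$ and $\epsilon = \eta/(2nT)$ so that the newly-generated quadratic-in-$\Delta\bm\alpha$ terms absorb only half of $\eta/n$, analogous to the final step of Theorem \ref{thm.est3}.

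The error norms $\E|\bm{E}^F_t|^2$ and $\E|\bm{E}^G_T|^2$ are then bounded as in \eqref{ferror}--\eqref{gerror}: the components of $\bX_t$ are independent (since $\bm\alpha$ is distributed), so the Poincar\'e hypothesis on $\sL(\bX_t)$ tensorizes to yield $\E[|E^{F,i}_t|^2 \mid X^i_t] = \Var(D_iF(\bX_t)\mid X^i_t) \leq C_P\sum_{j\neq i}\E[|D_{ij}F(\bX_t)|^2\mid X^i_t]$, and similarly at time $T$ for $G$. Summing over $i$ and using symmetry in $i,j$ delivers the claimed bound. The main obstacle is purely a matter of bookkeeping: one must verify that the negative contributions from the semiconvexity of $F$ and $G$ can be quantitatively dominated by the coercivity of $L^i$ via the estimates \eqref{xalphaest1}--\eqref{xalphaest2}, and this is precisely the content of the smallness condition \eqref{asmp:smallness}. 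Without it, the duality-based argument collapses, which parallels the well-known breakdown of uniqueness for FBSDEs beyond short time horizons or small coupling.
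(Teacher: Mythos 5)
Your proposal is correct and follows essentially the same route as the paper's own proof: the maximum-principle FBSDEs for both optimizers, It\^o applied to $\Delta X^i_t\cdot\Delta Y^i_t$, the semiconvexity lower bounds on $D^2F$ and $D^2G$ in place of convexity, the kinematic estimates \eqref{xalphaest1}--\eqref{xalphaest2} combined with Young's inequality and a small choice of $\delta,\epsilon$ absorbed via \eqref{asmp:smallness}, and the conditional Poincar\'e/tensorization bound on the error terms exactly as in \eqref{ferror2}--\eqref{gerror2}. No gaps.
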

\begin{proof}
The start of the proof is exactly like the proof of Theorem \ref{thm.est3}. We denote by $(\bX,\bY,\bZ)$ the solution of \eqref{eq.mpstand} such that $\alpha^i = - D_p H^i(X^i, NY^i)$ and by $(\overline{\bX}, \overline{\bY}, \overline{\bZ})$ the solution of \eqref{eq.mpdist} such that $\overline{\alpha} = - D_p H^i(\overline{X}^i, N \overline{Y}^i)$. Finally, we set $\bm m_t = (\sL(\overline{X}_t^1),...,\sL(\overline{X}_t^n))$, and set
\begin{align*}
\Delta \bX_t = \overline{\bX}_t - \bX_t , \quad \Delta \bY_t = \overline{\bY} _t - \bY_t , \quad \Delta \bm\alpha_t =  \overline{\bm\alpha}_t - \bm\alpha_t.  
\end{align*}
As in the proof of Theorem \ref{thm.est3}, we study the dynamics of the process $\Delta \bX_t \Delta \bY_t$ to get
\begin{align*} 
&\E\int_0^T \bigg(\frac{1}{n} \big(  D_a L^i(\overline{X}_t^i, \overline{\alpha}_t^i) - D_a L^i(X_t^i, \alpha_t^i)\big) \cdot \Delta \alpha^i_t  + \frac{1}{n} \big(D_x L^i(\overline{X}_t^i, \overline{\alpha}_t^i) - D_x L^i(X_t^i, \alpha_t^i) \big) \cdot \Delta X_t^i   \\
&\qquad+  \big(D_i F(\overline{\bX}_t) -  D_i F(\bX_t)  \big) \cdot\Delta X_t^i +  E^{F,i}_t\cdot \Delta X_t^i \bigg) dt  
 \\ &= - \E\big[\Delta X_T^i \cdot \Delta Y^i_T\big] = -\E\big[\Delta X^i_T \cdot (D_iG(\bX_T) - D_iG(\overline{\bX}_T)) +  E^{G,i}_T\cdot \Delta X_T^i\big].
\end{align*}
where $E_t^{F,i}$ and $E^{G,i}_T$ are defined as in the proof of Theorem \ref{thm.est3} and satisfy
\begin{align} \label{ferror2}
\E|E^{F,i}_t|^2 \leq C_P \E[\sum_{j \neq i} |D_{ij} F(\bX_t)|^2],   \\ \label{gerror2}
\E|E^{G,i}_T|^2 \leq C_P \E[\sum_{j \neq i} |D_{ij} G\bX_T)|^2].
\end{align}
Unlike in the proof of Theorem \ref{thm.est3}, this time we cannot use convexity of $F$ and $G$ to control the $D_iF$ and $D_iG$ terms. Instead, summing over $i$ and using Assumption \ref{assump.nonconvex} only leads to 
\begin{align} \label{youngs2}
\frac{C_L}{n} \E\int_0^T |\Delta \bm\alpha_t|^2 dt  &\leq \E\bigg[\int_0^T |\Delta \bX_t| |\bm{E}_t^{F}| dt + |\Delta \bX_T||\bm{E}^{G}_T| \bigg] \nonumber  \\
&\quad -\sum_{i=1}^n\E\bigg[\int_0^T  \Delta X_t^i \cdot(D_i F(\overline{\bX}_t) - D_iF(\bX_t)) \,dt -   \Delta X_t^i \cdot (D_i G(\overline{\bX}_T) - D_i G(\bX_T))\bigg] \nonumber \\
&\leq 
\frac{\delta}{2n} \E\int_0^T |\Delta \bX_t|^2 dt  + \frac{\epsilon}{2n} \E |\Delta \bX_T|^2 + \frac{n}{2\delta} \E \int_0^T |\bm{E}_t^F|^2 dt  + \frac{n}{2\epsilon} \E |\bm{E}^{G}_T|^2 \nonumber \\
&\quad + \frac{C_F}{n} \E \int_0^T |\Delta \bX_t|^2 dt  + \frac{C_G}{n} \E |\Delta \bX_T|^2 ,
\end{align}
for any $\delta,\epsilon > 0$.
We use the estimates 
\begin{align} \label{xalphaestnc1}
\E|\Delta \bX_t|^2 &\leq t \E\int_0^T |\Delta \bm\alpha_t|^2 dt, \\ \label{xalphaestnc2}
\E\int_0^T |\Delta \bX_t|^2 dt &\leq \frac{T^2}{2} \E\int_0^T |\Delta \bm\alpha_t|^2 dt. 
\end{align}
to get 
\begin{align*}
\frac{C_L}{n} \E\int_0^T |\Delta \bm\alpha_t|^2 dt &\leq 
\bigg(\frac{T^2\delta}{4n} + \frac{T \epsilon}{2n} +  \frac{C_F T^2}{2n} + \frac{C_G T}{n} \bigg) \E\int_0^T |\Delta \bm\alpha_t|^2 dt  \\
&\quad +  \frac{n}{2\delta} \E\int_0^T |\bm{E}_t^F|^2 dt + \frac{n}{2\epsilon} \E|\bm{E}^{G}_T|^2 .
\end{align*}
Using the Assumption  \eqref{asmp:smallness}, 
we may choose $\epsilon,\delta$ sufficiently small so that $C < C_L$, where $C$ is defined by
\begin{align*}
C = \frac{T^2\delta}{4} + \frac{T \epsilon}{2} +  \frac{C_F T^2}{2} + C_G T.
\end{align*}
This leads to
\begin{align*}
\frac{C_L-C}{n} \E\int_0^T |\Delta \bm\alpha_t|^2 dt  \leq \frac{n}{2\delta} \E\int_0^T |\bm{E}_t^F|^2 dt + \frac{n}{2\epsilon} \E|\bm{E}^{G}_T|^2.
\end{align*}
Recalling the estimates \eqref{ferror2} and \eqref{gerror2} completes the proof.
\end{proof}

Of course, for this result to be useful, one needs a (dimension-free) estimate on the Poincar\'e constant of the optimally controlled state process, without relying on convexity as in the proof of Lemma \ref{lem.poincare}. The following Proposition gives such a result, under the assumption that 
\begin{align}
\begin{split}
\|D_{i} F \|_{\linf} \leq \frac{C_0}{n}, \quad \|D_{ii} F \|_{\linf} \leq \frac{C_0}{n},  \\ 
\|D_{i} G \|_{\linf} \leq \frac{C_0}{n}, \quad \|D_{ii} G \|_{\linf} \leq \frac{C_0}{n}
\end{split} \label{fglip}
\end{align}
for each $i = 1,...,n$,
and 
\begin{align} \label{dxh}
|D_x H^i(x,0)| \leq C_0, \quad x \in \R^d
\end{align}
for some constant $C_0$. 
Note that these conditions are all satisfied in the mean field case for functions with bounded first and second derivatives; e.g., if $F(\bx) = \sF(m^n_{\bx})$ then $\|D_iF\|_\infty \le \frac{1}{n}\|D_m\sF\|_\infty$.

\begin{proposition} \label{prop.poinnonconvex}
Suppose that Assumption \ref{assump.nonconvex} is in force and that \eqref{fglip} and \eqref{dxh} hold. Let $\bm\alpha$ be an optimizer for \eqref{controldistol}, and let $\bX$ be the corresponding state process. Then there is a constant $C_P$ depending only on $C_0, \|D_{xx} H\|_{\infty}, \|D_{xp}H\|_{\infty}, \|D_{pp} H\|_{\infty}$, $d$, and $T$ such that $\sL(\bX_t)$ satisfies the Poincar\'e inequality with constant $C_P$, for each $0 \leq t \leq T$.
\end{proposition}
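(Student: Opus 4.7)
\emph{Plan.} The strategy is to reduce to $n$ decoupled single-state HJB problems via the necessary optimality condition from Section \ref{sec:dstr-opt-necessary}, obtain a uniform $C^{1,1}$ bound on a rescaled value function, and then apply the Poincar\'e estimate for Lipschitz-drift diffusions from Lemma \ref{lem.poincare}.

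First, because $\bm\alpha \in \ad$ is optimal for the distributed problem, the argument in Section \ref{sec:dstr-opt-necessary} (whose observation does not rely on any convexity of $F$ or $G$) gives $\alpha^i(t,x) = -D_p H^i(x, n D v^i(t,x))$, with $v^i$ the classical solution of the single-state HJB \eqref{vieqn} whose data are $F^i(t,\cdot) = \langle \bm m_t^{-i}, F \rangle$ and $G^i = \langle \bm m_T^{-i}, G \rangle$. Setting $w^i := n v^i$, the rescaled HJB reads
\begin{align*}
-\partial_t w^i - \tfrac12 \Delta w^i + H^i(x, D w^i) = n F^i(t,x), \qquad w^i(T,x) = n G^i(x).
\end{align*}
Since $D(nF^i)(t,\cdot) = \langle \bm m_t^{-i}, D_i F \rangle$ and $D^2(nF^i)(t,\cdot) = \langle \bm m_t^{-i}, D_{ii} F \rangle$, and analogously for $G^i$, the assumption \eqref{fglip} gives the uniform bounds $\|D(nF^i)\|_{\linf}, \|D^2(nF^i)\|_{\linf}, \|D(nG^i)\|_{\linf}, \|D^2(nG^i)\|_{\linf} \leq C_0$, independently of $i$, $n$, $t$, and the auxiliary marginals $\bm m^{-i}_s$.

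The main technical step is a uniform $C^{1,1}$ bound on $w^i$: the plan is to show
\begin{align*}
\sup_{(t,x) \in [0,T] \times \R^d} \bigl( |D w^i(t,x)| + |D^2 w^i(t,x)|_{\ope} \bigr) \leq M,
\end{align*}
with $M$ depending only on $C_0$, $\|D_{xx}H\|_{\infty}$, $\|D_{xp}H\|_{\infty}$, $\|D_{pp}H\|_{\infty}$, $d$, and $T$. The Lipschitz bound will be obtained by a Bernstein-type argument: differentiating the equation in $x$, setting $K := |Dw^i|^2$, and invoking the growth bound $|D_x H^i(x,p)| \leq C_0 + \|D_{xp}H\|_{\infty}|p|$ coming from \eqref{dxh} together with Assumption \ref{assump.nonconvex}, one absorbs the cross-term involving $D^2 w^i$ into the coercive term $-|D^2 w^i|^2$ arising from $-\tfrac12 \Delta K$, yielding a scalar parabolic differential inequality for $\sup_x K(t,\cdot)$ that produces a bound independent of $n$. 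Iterating the Bernstein argument on the semilinear parabolic system satisfied by $q := D w^i$ (or, equivalently, applying parabolic Schauder estimates to this system) yields the second-derivative bound. This is the main obstacle of the proof: because $nF^i$ and $nG^i$ are not assumed convex, one cannot rely on the semiconcavity arguments that would otherwise short-circuit the analysis.

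Once the $C^{1,1}$ bound is in hand, the drift $\alpha^i(t,x) = -D_p H^i(x, D w^i(t,x))$ is globally $L$-Lipschitz in $x$ with $L := \|D_{xp}H\|_{\infty} + \|D_{pp}H\|_{\infty} M$, uniformly in $i$, $n$, $t$. Applying Lemma \ref{lem.poincare}(i) with $\gamma = L$ (and taking the Poincar\'e constant of the deterministic initial condition $\xi^i$ to be $0$) yields a uniform Poincar\'e inequality for each marginal $\sL(X^i_t)$. Finally, because the distributed structure forces $X^1,\ldots,X^n$ to be independent, $\sL(\bX_t)$ is a product measure, and since Poincar\'e inequalities tensorize, $\sL(\bX_t)$ inherits the same constant, completing the proof.
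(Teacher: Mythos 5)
Your reduction to the single-state HJB \eqref{vieqn}, the rescaling $w^i = nv^i$, and the final step (Lipschitz drift $\Rightarrow$ Lemma \ref{lem.poincare}(i) $\Rightarrow$ tensorization) all match the paper, and the gradient bound $\|Dw^i\|_\infty\le C$ is indeed obtainable as you say. The genuine gap is exactly at the step you flag as the main obstacle: the uniform bound $\|D^2 w^i\|_\infty\le M$ cannot be obtained by ``iterating the Bernstein argument'' or by parabolic Schauder estimates with constants depending only on $C_0,\|D_{xx}H\|_\infty,\|D_{xp}H\|_\infty,\|D_{pp}H\|_\infty,d,T$. Differentiating the rescaled HJB twice gives, for $P=D^2w^i$, a Riccati-type equation containing the term $P\,D_{pp}H^i\,P$. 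Because $D_{pp}H^i\ge 0$, this term has the favorable sign only for the \emph{upper} (semiconcavity) bound on $P$; for the lower bound (the direction where nonconvexity of $F,G$ bites) the sup-norm comparison produces a differential inequality of the form $-\psi'\le a+b\psi+\|D_{pp}H\|_\infty\psi^2$, which blows up in finite time, so the iteration only works for small $T$. Schauder is no better: the coefficients and source of the system for $q=Dw^i$ are controlled in H\"older norm only in terms of $D^2w^i$ itself (and $D_pH^i(x,\cdot)$ is unbounded in $x$), so the argument is circular. Note that the two-sided Hessian bound is nevertheless true, but it fundamentally uses the \emph{first}-derivative bounds $\|D_i F\|_\infty,\|D_iG\|_\infty\le C_0/n$ to control the quadratic term in an integrated sense; your sketch never uses them at the second-derivative stage (the Cole--Hopf case already shows the correct bound is of the form $\|g''\|_\infty+\|g'\|_\infty^2$, with the $\|g'\|_\infty^2$ term indispensable).

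The paper closes this step probabilistically rather than by maximum-principle iteration. It represents $Y^1_t=Dv^i(t,X^i_t)$ and $Y^2_t=D^2v^i(t,X^i_t)$ along the optimally controlled path as solutions of BSDEs obtained by differentiating \eqref{vieqn} once and twice. An energy identity (It\^o on $|Y^1_t|^2$), using $|D_iF|,|D_iG|\le C_0/n$ and \eqref{dxh}, yields the conditional BMO-type estimate \eqref{bmoest}, $\big\|\E[\int_t^T|Z^1_s|^2ds\,|\,\sF_t]\big\|_{\linf}\le C/n^2$ with $Z^1_s=D^2v^i(s,X^i_s)$. This integrated control is precisely what tames the quadratic term $n\,D_{pp}H^i|Z^1|^2$ in the equation for $Y^2$: its conditional time-integral is $O(1/n)$, so Gronwall on \eqref{yrep} gives $|Y^2_t|\le C/n$, i.e.\ $\|D^2v^i\|_\infty\le C/n$, and hence the uniform Lipschitz bound on $\alpha^i$ and the Poincar\'e constant via Lemma \ref{lem.poincare}. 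To repair your proof you would need to replace the Bernstein/Schauder step by this (or an equivalent analytic) argument that controls $\int|D^2w^i|^2$ along the optimal trajectories through the $C_0$-bound on the first derivatives of $nF^i,nG^i$, rather than asserting a pointwise $C^{1,1}$ bound by maximum-principle iteration.
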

\begin{proof}
In this proof, $C$ denotes a constant which may change from line to line but depends only on the parameters stated in the Proposition \ref{prop.poinnonconvex}. For simplicity, we give the proof in the special case $d = 1$, the argument in the general case is the same but more notationally intensive. We also assume that $DV$ and $D^2V$ are both $C^{1,2}$, an assumption which is easily removed by a mollification argument as in the proof of Theorem \ref{thm.est1}. 

We use again the observation that the optimal control $\bm\alpha$ must satisfy $\alpha^i(t,x) = - D_p H^i(x, nDv^i(t,x))$ for some solution $v^i$ to the equation \eqref{vieqn}. Fixing $i$ and setting $w^1 = D v^i$, we differentiate \eqref{vieqn} in space to get 
\begin{align*}
\begin{cases}
- \partial_t w^1 - \Delta w^1 + \frac{1}{n} D_{x} H^i(x,nw^1) + D_p H^i(x, n w^1) Dw^1 = D F^i(t,x), \quad (t,x) \in [0,T) \times \R^d, \\
w^1(T,x) = D G^i(t,x). 
\end{cases}
\end{align*}
Recalling that $dX_t^i = \alpha_t^i dt + dW_t^i$, we see that $Y_t^1 \coloneqq  w^1(t,X_t^i)$ and $Z_t^1 \coloneqq Dw^1(t,X_t^i)$ satisfy
\begin{align*}
dY_t^1 = \bigg(\frac{1}{n} D_x H^i(X_t^i,nY_t^1) - D F^i(t,X_t^i) \bigg) dt + Z_t^1 dW_t^i, 
\end{align*}
with terminal condition $Y_T = D_i G(X_T^i)$.
Since $|D_i G| \leq C_0/n$, $|D_i F| \leq C_0/n$, and $|\frac{1}{n} D_x H^i(X_t^i,n Y_t)| \leq C_0/n + \|D_{xp} H^i\|_{\infty}Y^i_t$, a standard BSDE argument (expanding $d|Y_t|^2$) gives 
\begin{align} \label{bmoest}
\bigg\|\E\bigg[\int_{t}^T |Z_s^1|^2 ds \,\Big|\, \sF_t\bigg]\bigg\|_{\linf} \leq \frac{C}{n^2}, \quad \forall t \in [0,T].
\end{align}
Now we differentiate the PDE again to find that $w^2 \coloneqq D^2 v^i$ satisfies 
\begin{align*}
\begin{cases}
- \partial_t w^2- \Delta w^2 + \frac{1}{n} D_{xx} H^i(x,nw^1) + D_p H^i(x, nw^1) Dw^{2} \\
\qquad + 2D_{xp} H^i(x, nDv^i)w^2 + n D_{pp} H^i(x,nw^1) |w^2|^2  = D^2 F^i(t,x), \quad (t,x) \in [0,T) \times \R^d, \\
w^2(T,x)  = D^2 G^i(x), \quad x \in \R^d. 
\end{cases}
\end{align*}
Thus the processes $Y^2 = w^2(t,X_t)$ and $Z^2 = Dw^2(t,X_t)$ satisfy 
\begin{align*}
dY_t^2 = \bigg( \frac{1}{n} D_{xx} H^i(x, nY_t^1) + 2 D_{xp} H^i(x,n Y^1_t) Y_t^2  + nD_{pp} H^i(x,nY_t^1) |Z^1|^2 - D^2 F^i(t,X_t) \bigg) dt + Z_t^2 dW_t^i, 
\end{align*}
with terminal condition $Y_T^2 = D^2 G^i(X_T^i)$. 
Integrating and taking conditional expectations gives
\begin{align} \label{yrep}
Y_t^2 = \ &\E\bigg[D^2 G(T,X^i_T) - \int_t^T \bigg(\frac{1}{n} D_{xx} H^i(X_s^i, n Y_s^i)  \nonumber \\
& \quad + 2 D_{xp} H^i(X_s^i, nY_s^1)Y_s^2 + n D_{pp} H^i(X_s^i, nY_s^1) |Z_s^1|^2 - D^2 F^i(s,X_s^i) \bigg) ds \,\Big|\, \sF_t \bigg]. 
\end{align}
Using \eqref{bmoest} we have
\begin{align*}
    \E\bigg[\int_t^T nD_{pp} H^i(X_s^i, nY_s^i) |Z_s^1|^2\,ds \,\Big|\, \sF_t \bigg] \leq \frac{C}{n},
\end{align*}
and also by \eqref{fglip} and \eqref{dxh} we have
\begin{align*}
    \|D^2 F^i(t,X_t)\|_{\linf} \le C_0/n, \quad  \|D^2 G^i(t,X_t)\|_{\linf} \le C_0/n. 
\end{align*}
Thus from \eqref{yrep} and Gronwall's inequality we get the estimate $|Y^2_t| \leq C/n$ for all $t$ a.s., from which we infer $\|D^2 v^i\|_{\linf} \leq C/n$. 
The proof is completed by differentiating the optimal control $\alpha^i$, to find 
\begin{align*}
D \alpha^i(t,x) = - D_{xp} H^i(x, nDv^i(t,x)) - n D_{pp}H^i(x, nDv^i(t,x)) D^2 v^i(t,x) \leq C, 
\end{align*}
and then applying Lemma \ref{lem.poincare}. 
\end{proof}

\appendix

\section{Well-posedness of maximum principle FBSDE} \label{sec.appendix}

This appendix contains a proof of Proposition \ref{prop.mpfbsde}, which states that the McKean-Vlasov FBSDE \eqref{prop.mpfbsde} in fact has a unique solution under Assumption \ref{assump.conv}. 

\begin{proof}[Proof of Proposition \ref{prop.mpfbsde}]

For uniqueness, we assume that we have two solutions $(\bX, \bY, \bZ)$ and $(\overline{\bX}, \overline{\bY}, \overline{\bZ})$. Let $\Delta \bX = \bX - \overline{\bX}$ and $\Delta \bY = \bY - \overline{\bY}$. Moreover, we set $\alpha_t^i = - D_p H^i(X_t^i, n Y_t^i)$, $\overline{\alpha}_t^i = - D_p H^i(\overline{X}_t^i, n \overline{Y}_t^i)$, and $\Delta \bm\alpha = \bm\alpha - \overline{\bm\alpha}$. Let $(\sF^i_t)_{t \in [0,T]}$ denote the filtration generated by $(W^i_t)_{t \in [0,T]}$. We compute 
\begin{align*}
d(\Delta X^i_t \cdot \Delta Y^i_t) &= \bigg(\Delta Y^i_t \Delta \alpha_t^i - \frac{1}{n} ( D_x L^i(X_t^i, \alpha_t^i) - D_x L^i(\overline{X}_t^i, \overline{\alpha}_t) ) \cdot  \Delta X_t^i \\
&\qquad - \E[D_i F(X_t) - D_i F(\overline{X}_t) \,|\, \sF_t^i]\cdot  \Delta X_t^i \bigg) dt + dM_t^i \\
&= \bigg(- \frac{1}{n} (D_a L^i(X_t^i, \alpha_t^i) - D_a L^i(\overline{X}_t^i, \overline{\alpha}_t^i))\cdot  \Delta \alpha_t^i - \frac{1}{n} ( D_x L^i(X_t^i, \alpha_t^i) - D_x L^i(\overline{X}_t^i, \overline{\alpha}_t) ) \cdot  \Delta X_t^i \\
&\qquad - \E[D_i F(X_t) - D_i F(\overline{X}_t) \,|\, \sF_t^i] \cdot \Delta X_t^i \bigg) dt + dM_t^i 
\end{align*}
with $M^i$ being a martingale. Here we have used the fact that $\alpha_t^i$ maximizes $a \mapsto - a \cdot nY_t^i - L^i(X_t^i,a)$ so that $Y_t^i = - \frac{1}{n} D_a L^i(X_t^i, \alpha_t^i)$, and likewise $\overline{\alpha}_t^i = - \frac{1}{n} D_a L^i(\overline{X}_t^i, \overline{\alpha}_t^i)$. We have also used the identities
\begin{align*}
\sF^i(X_t^i, \bm m_t) = \E[D_i F(\bX_t) \,|\, X_t^i] = \E[D_i F(\bX_t) \,|\, \sF_t^i],
\end{align*}
the first coming from the definition of $\sF^i$, and the second from the independence of $(X^j)_{j \neq i}$ and $\sF^i_t$-measurability of $X^i_t$.
Integrating, summing over $i$, and taking expectations, we get
\begin{align*}
C_L &\E \int_0^T |\Delta \bm\alpha_t|^2 dt  \\
	&\leq -\E\bigg[ \int_0^T \sum_{i=1}^n  \E[D_i F(\bX_t) - D_i F(\overline{\bX}_t) \,|\, \sF_t^i] \cdot \Delta X_t^i \, dt  + \sum_{i=1}^n \big(\E[D_i G(\bX_T) - D_i F(\overline{\bX}_T) \,|\, \sF_T^i] \cdot \Delta X_T^i \big)  \bigg] \\ &= -\E\bigg[\int_0^T \sum_{i=1}^n (D_iF(\bX_t) -  D_i F(\overline{\bX}_t)) \cdot \Delta X_t^i \, dt + \sum_{i=1}^n (D_i G(\bX_T) - D_i G(\overline{\bX}_T)) \cdot \Delta X_T^i \bigg] \\
&\leq 0, 
\end{align*}
so that $\Delta \bm\alpha = 0$. But once we know that $\Delta \alpha = 0$, clearly $\Delta \bX = 0$, which easily implies that in fact $(\bX, \bY, \bZ) = (\overline{\bX}, \overline{\bY}, \overline{\bZ})$. 

Now we turn to existence. If in addition to Assumption \ref{assump.conv} we assume that $F$ and $G$ are Lipschitz, then existence follows directly from the main result of \cite{cardelmkvfbsde}. When $F$ and $G$ are not Lipschitz but are convex with bounded second derivatives, as in Assumption \ref{assump.conv}, we can approximate $F$ and $G$ by sequences $F^{(k)}$, $G^{(k)}$ in such a way that 
\begin{enumerate}
\item $F^{(k)}$ and $G^{(k)}$ are in $C^2((\R^d)^n)$ and Lipschitz
\item $0 \leq D^2 F^{(k)} \leq C$, $0 \leq D^2 G^{(k)} \leq C$, for each $k \in \N$ and some $C$ independent of $k$
\item $F^{(k)}$ and $G^{(k)}$ are bounded from below, uniformly in $k$
\item $F = F^{(k)}$ and $G = G^{(k)}$ on the ball of radius $k$ in $(\R^d)^n$. 
\end{enumerate}
Then we can, for each $k \in \N$, produce a triple $(\bX^{(k)}, \bY^{(k)}, \bZ^{(k)})$ satisfying
\be
\begin{cases}
\ds dX_t^{(k),i} = - D_p H^i(X_t^{(k),i}, N Y_t^{(k),i}) dt + dW_t^i, \
\ds, \\
dY_t^{(k),i} = -\big( \frac{1}{n}D_x L^i\big(X_t^{(k),i}, -D_pH^i(X_t^{(k),i},NY_t^{(k),i})\big) + \sF^{(k),i}(X_t^{(k),i}, \bm m^{(k)}_t)\big) dt +  Z_t^{(k),i}dW_t^i, \\
X_0^{(k),i} = \xi^i, \quad Y_T^i = \sG^{(k),i}(X_T^{(k),i}, \bm m^{(k)}_t), 
\end{cases}
\ee
where 
\begin{align*}
\bm m^{(k)}_t = (\sL(X_t^{(k),1}),...,\sL(X_t^{(k),n})), 
\end{align*}
and with $\sF^{(k),i}, \sG^{(k),i} : \R^d \times \spt^n \to \R^d$ given by
\begin{align*}
\sF^{(k),i}(x,\bm m^{(k)}) = \langle \bm m^{(k),-i}, D_i F^{(k)} \rangle(x), \quad \sG^{(k),i}(x,\bm m^{(k)}) = \langle \bm m^{(k),-i}, D_i G^{(k)} \rangle(x)
\end{align*}
Our goal will be to show that the sequence $(\bX^{(k)}, \bY^{(k)}, \bZ^{(k)})$ is Cauchy in $\stwo \times \stwo \times \ltwo$, which will clearly imply existence. In fact to do this we will introduce an additional assumption, namely that $\xi^i \in L^p$ for some $p > 2$. We will later remove this assumption by approximation.
We set $\alpha_t^{(k),i} = - D_p H^i(X_t^{(k),i}, nY_t^{(k),i})$, and for $k,j \in \N$, we set $\Delta^{j,k} \bX = \bX^{(j)} - \bX^{(k)}$, and likewise for $\Delta^{j,k} \bY $, $\Delta^{j,k} \bm\alpha$, $\Delta^{j,k} \bZ$. By expanding $d (\Delta^{j,k} \bX \cdot  \Delta^{j,k} \bY)$ exactly as in the uniqueness argument above, we obtain the estimate 
\begin{align}  \label{jkcomp} \nonumber 
C_L \E \int_0^T |\Delta^{j,k} \bm\alpha_t|^2 dt  \leq \E \bigg[ \int_0^T \sum_{i=1}^n \Delta^{j,k} X_t^{i} \cdot (\sF^{(j),i}(X_t^{(k),i}, \bm m_t^{(k)}) - \sF^{(k),i}(X_t^{(k),i}, \bm m_t^{(k)}) ) \big) dt  \\
+ \sum_{i=1}^n  \Delta^{j,k} X_T^{i} \cdot (\sG^{(j),i}(X_T^{(k),i}, \bm m_t^{(k)}) - \sG^{(k),i}(X_t^{(k),i}, \bm m_t^{(k)}) ) \bigg]
\end{align}
Applying Young's inequality to \eqref{jkcomp} together with the observation $\Delta^{j,k} X_t^i = \int_0^t \Delta^{j,k} \alpha_t^i\,dt$, we find a constant $C$ independent of $k$ and $j$ with the property that 
\begin{align} \label{cauchyest}
\E \int_0^T |\Delta^{j,k} \bm \alpha_t|^2 dt  &\leq C \E\bigg[ \int_0^T \sum_{i=1}^n |\sF^{(j),i}(X_t^{(k),i}, \bm m_t^{(j)}) - \sF^{(k),i}(X_t^{(k),i}, \bm m_t^{(k)})|^2 \, dt 
\nonumber \\ &\qquad + \sum_{i=1}^n   |\sG^{(j),i}(X_T^{(k),i}, \bm m_T^{(j)}) - \sG^{(k),i}(X_t^{(k),i}, \bm m_T^{(k)})|^2 \bigg] \nonumber  \\
&= C\E\bigg[\int_0^T \sum_{i=1}^n |\E[D_i F^{(j)}(\bX_t^{(k)}) - D_iF^{(k)}(\bX_t^{(k)}) \,|\, \sF_t^i]|^2 \, dt 
\nonumber \\ &\qquad + \sum_{i=1}^n  |\E[D_i G^{(j)}(\bX_T^{(k)}) - D_iG^{(k)}(\bX_T^{(k)}) \,|\, \sF_T^i]|^2 \bigg] \nonumber \\
&\leq C\E\bigg[\int_0^T \sum_{i=1}^n |D_i F^{(j)}(\bX_t^{(k)}) - D_iF^{(k)}(\bX_t^{(k)})|^2 \, dt \nonumber \\ &\qquad + 
 \sum_{i=1}^n  |D_i G^{(j)}(\bX_t^{(k)}) - D_iG^{(k)}(\bX_t^{(k)}) |^2 \bigg] \nonumber \\
 &\leq C\E\bigg[\int_0^T (1 + |\bX_t^{(k)}|^2) 1_{\{|\bX_t^{(k)}| \geq k \wedge j\}} dt  +  (1 + | \bX_T^{(k)}|^2) 1_{\{|\bX_T^{(k)}| \geq k \wedge j\}}\bigg].
\end{align}
It is easy to see that 
\begin{align*}
\|\Delta^{j,k} \bX\|_{\stwo} + \|\Delta^{j,k} \bY\|_{\stwo} + \|\Delta^{j,k} \bZ\|_{\ltwo} \leq C \|\Delta^{j,k} \bm\alpha \|_{\ltwo}, 
\end{align*}
so in fact \eqref{cauchyest} shows that the sequence $(\bX^{(k)}, \bY^{(k)}, \bZ^{(k)})$ is Cauchy as soon as we show that
\begin{align*}
\E\bigg[\int_0^T (1 + |\bX_t^{(k)}|^2) 1_{\{|\bX_t^{(k)}| \geq k\}} dt  +  (1 + | \bX_T^{(k)}|^2) 1_{\{|\bX_T^{(k)}| \geq k\}}\bigg] \xrightarrow{k \to \infty} 0, 
\end{align*}
which in turn would follow (from a uniform integrability argument) from the estimate 
\begin{align} \label{cauchysuff}
    \sup_k \E\bigg[\sup_{0 \leq t \leq T} |\bX_t^{(k)}|^p\bigg] < \infty,
\end{align}
for some $p > 2$.
To do this, we first note that applying \eqref{jkcomp} with $k = 1$ shows already the weaker estimate
\begin{align} \label{cauchysuffweaker}
    \sup_k \E\bigg[\sup_{0 \leq t \leq T} |\bX_t^{(k)}|^2\bigg] < \infty.
\end{align}
From this, we can see that the functions 
\begin{align*}
F^{(k),i}(\cdot) = \langle m^{(k),-i}, F \rangle, \quad G^{(k),i}(\cdot) = \langle m^{(k),-i}, G \rangle
\end{align*}
satisfy
\begin{align} \label{fkgk}
|F^{(k),i}(x)| \leq C(1 + |x|^2), \quad |DF^{(k),i}(x)| \leq C(1 + |x|), \quad |D^2 F^{(k),i}(x)| \leq C, \nonumber \\
|G^{(k),i}(x)| \leq C(1 + |x|^2), \quad |DG^{(k),i}(x)| \leq C(1 + |x|), \quad |D^2 G^{(k),i}(x)| \leq C,
\end{align}
for some $C$ independent of $k$. But now from Lemma \ref{lem.vichar}, we know that 
\begin{align*}
    \alpha^{(k),i}_t = - D_p H^i(X_t^{(k),i}, n Dv^{(k),i}(t,X_t^{(k),i})),
\end{align*} 
where $v^{(k),i}$ satisfies the PDE 
\begin{align*}
\begin{cases}
- \partial_t v^{(k),i} - \Delta v^{(k),i} + \frac{1}{n} H^i(x,n Dv^{(k),i}) = F^{(k),i}(t,x), \quad (t,x) \in [0,T) \times \R^d, \\
v^{(k),i}(T,x) = G^i(x), \quad x \in \R^d. 
\end{cases}
\end{align*}
From \eqref{fkgk} we deduce that the $v^{(k),i}$ satisfy 
\begin{align*}
|Dv^{(k),i}(x)| \leq C(1 + |x|)
\end{align*}
for some constant $C$ independent of $k$, and so for each $k$ we have $|\alpha_t^{(k),i}| \leq C(1 + |X_t^{(k),i}|)$. Since $dX_t^{(k),i} = \alpha_t^{(k),i} dt + dW_t^i$ and $X_0^{(k),i} = x^i$, from here it is standard to show that \eqref{cauchysuff} holds. 

This completes the proof in the special case that $\xi^i \in L^p$ for some $p > 2$. To remove this additional assumption, we again approximate (and here we recycle notation from earlier in the proof), solving for each $k \in \N$ the equation
\be
\begin{cases}
\ds dX_t^{(k),i} = - D_p H^i(X_t^{(k),i}, N Y_t^{(k),i}) dt + dW_t^i, \
\ds, \\
dY_t^{(k),i} = -\big( \frac{1}{n}D_x L^i\big(X_t^{(k),i}, -D_pH^i(X_t^{(k),i},NY_t^{(k),i})\big) + \sF^{(k),i}(X_t^{(k),i}, \bm m^{(k)}_t)\big) dt +  Z_t^{(k),i}dW_t^i, \\
X_0^{(k),i} = \xi^{(k),i}, \quad Y_T^i = \sG^{(k),i}(X_T^{(k),i}, \bm m^{(k)}_t), 
\end{cases}
\ee
with $\xi^{(k),i} = \xi^{i} 1_{|\xi^i| \leq k}$. Defining $\Delta^{j,k} \bm \alpha$, $\Delta^{j,k} \bm X$, $\Delta^{j,k} \bm Y$, exactly as above, the same computation (expanding the dynamics of $\Delta^{j,k} \bm X \cdot \Delta^{j,k} \bm Y$ this time reveals 
\begin{align*}
\E\bigg[\int_0^T |\Delta^{j,k} \bm \alpha_t|^2 dt \bigg] \leq C \E[|\Delta^{j,k} \bm \xi| |\Delta^{j,k} \bm Y_0|]. 
\end{align*} 
Applying H\"older's inequality to the right-hand side and using the fact that the sequence $\bm Y_0^{(k)}$ is bounded in $L^2$ shows that the sequence of solutions in Cauchy, allowing us to pass to the limit and complete the proof.
\end{proof}

\bibliographystyle{abbrv} 
\bibliography{dist}

\end{document}